\newcommand\bes{\begin{eqnarray}}
\newcommand\ees{\end{eqnarray}}
\newtheorem{theorem}{Theorem}[section]
\newtheorem{lemma}[theorem]{Lemma}
\newtheorem{corollary}[theorem]{Corollary}
\newtheorem{remark}[theorem]{Remark}
\newtheorem{proposition}[theorem]{Proposition}
\numberwithin{equation}{section}
\begin{document}
\title[Spatial Propagation in a Nonlocal Epidemic Model]{\textbf{Spatial Propagation in an Epidemic Model with  Nonlocal Diffusion: the Influences of Initial Data and Dispersals}}

\author[W.-B. Xu, W.-T. Li and S. Ruan]{Wen-Bing Xu$^{1,2}$, Wan-Tong Li$^{2,*}$ and Shigui Ruan$^{3}$}
\thanks{\hspace{-.6cm}
$^1$Academy of Mathematics and Systems Science, Chinese Academy of Sciences, Beijing 100190, P. R. China.
\\
$^2$School of Mathematics and Statistics, Lanzhou University, Lanzhou, Gansu 730000, P. R. China.
\\
$^3$Department of Mathematics, University of Miami, Coral Gables, FL 33146, USA.
\\
$^*${\sf Corresponding author} (wtli@lzu.edu.cn)}

\date{\today}

\begin{abstract}
This paper studies  an epidemic model with nonlocal dispersals. We focus on the influences of initial data and nonlocal dispersals on its spatial propagation. Here the initial data stand for  the spatial concentrations  of infectious agent and infectious human population  when the epidemic breaks out and the nonlocal dispersals mean their diffusion strategies.
Two types of initial data  decaying to zero exponentially or faster are considered.
For the first type, we show that the spreading speeds are two constants whose signs change with the number of   elements in some set.
Moreover, we find an interesting phenomenon: the asymmetry of nonlocal dispersals can influence the  propagating directions of solutions and  the stability of steady states. For the second type, we show that the spreading speed is decreasing with respect to  the exponentially decaying rate of   initial data, and further, its minimum value coincides with the spreading speed for the first type. In addition, we give some results about the nonexistence of traveling wave solutions and the  monotone property of solutions. Finally, some applications are presented to illustrate the theoretical results.

\vspace{1em}
\textbf{Keywords}: Nonlocal dispersal; epidemic model; spreading speed; initial data; dispersal kernel.

\textbf{AMS Subject Classification}: 35C07, 35K57, 92D25
\end{abstract}

\maketitle

\section{Introduction}
\noindent

To model the spread of cholera in the European Mediterranean regions in 1973, Capasso and Maddalena \cite{CM1981,CM1982} proposed a system of two parabolic differential equations to describe a positive feedback interaction between the concentration of bacteria and the infectious human population; namely, the high concentration of bacteria leads to the large infection rate of human population and once infected the human population increases the growth rate of bacteria. Capasso and Wilson \cite{CW1997,CK1988} also applied this mechanism to model   other epidemics with fecal-oral  transmission (such as typhoid fever  and hepatitis A).  In these studies, the spatial movements of the infectious agent and the infectious human host are described by the Laplacian operators.

In this paper, we use nonlocal convolution operators to represent the spatial movements of the infectious agent and the infectious human host. Then the epidemic model becomes
\begin{equation}\label{1.1}
\left\{\begin{aligned}
& u_t(t,x)=\mathcal{D}_1 u(t,x)-\alpha u(t,x)+h(v(t,x)), ~~t>0,~x\in\mathbb R,\\
& v_t(t,x)=\mathcal{D}_2 v(t,x)-\beta v(t,x)+g(u(t,x)), ~~~t>0,~x\in\mathbb R,\\
& u(0,x)=u_0(x),~v(0,x)=v_0(x),~~x\in\mathbb R,
\end{aligned}
\right.
\end{equation}
where $u(t,x)$ and $v(t,x)$ biologically stand for the spatial concentration of infectious agent (bacteria or viruses)  and the spatial density of infectious human population at time $t$ and  location $x\in \mathbb R$, respectively. The constants $\alpha>0$ and $\beta>0$ denote the natural death rates of infectious agent and infectious humans, respectively. The function $h(v)$ denotes the growth  of infectious agent caused by infectious humans. Meanwhile, the function $g(u)$ is the infection rate of human population under the assumption that the total susceptible human population is a constant  during the evolution of the epidemic. The nonlocal dispersals, represented by the following convolution operators
\[
\begin{aligned}
&\mathcal{D}_1u(t,x)\triangleq k_1*u(t,x)-u(t,x)=\int_{\mathbb R}k_1(x-y)u(t,y)dy-u(t,x),\\
&\mathcal{D}_2v(t,x)\triangleq k_2*v(t,x)-v(t,x)=\int_{\mathbb R}k_2(x-y)v(t,y)dy-v(t,x),
\end{aligned}
\]
describe the  movements of   infectious agent and infectious humans, respectively, between not only  adjacent   but also nonadjacent spatial locations. The dispersal kernel $k_i$   with $i\in\{1,2\}$ is nonnegative and stands for the probability of the movement from the spatial location $0$ to $x$, thus $\int_{\mathbb R}k_i(x)dx=1$.
Here the movements between nonadjacent spatial locations can be thought as the long-distance movements of infectious agent and infectious humans across cites or countries by air-traffic and other long-distance transportation.


\subsection{A brief review of related literature.}
\noindent

The spatial propagation of system \eqref{1.1} and its variants has been widely studied in the literature. For example,  Li et al. \cite{LXZ2017} and Meng et al. \cite{MYH2019} studied the traveling wave solutions, spreading speeds and entire solutions of system \eqref{1.1}. We refer to   Bao and Li  \cite{BL2020}, Bao et al. \cite{BLSW2018}, Hu et al. \cite{HKLL2015}, Liu and Wang \cite{LW2020}, Wang and Castillo-Chavez \cite{WC2012} and Xu et al. \cite{XLL2018} for results on the spreading dynamics of more general nonlocal dispersal systems.
Particularly, if the infected humans do not move during the infectious period (for example, they are in sickbeds or quarantined probably), then system \eqref{1.1} reduces to  the following partially degenerate system
\begin{equation}\label{01.2}
\left\{\begin{aligned}
& u_t(t,x)=k_1*u(t,x)-u(t,x)-\alpha u(t,x)+h(v(t,x)), ~~~t>0,~x\in\mathbb R,\\
& v_t(t,x)=-\beta v(t,x)+g(u(t,x)), ~~~~t>0,~x\in\mathbb R.
\end{aligned}
\right.
\end{equation}
This system is a special case of system \eqref{1.1} with  $k_2(x)$ being equal to a Dirac function $\delta(x)$ (the movement happens only between  every spatial location and itself; namely, there is no movement of  infected humans).
Traveling wave solutions and entire solutions of system \eqref{01.2} were studied by Wang et al. \cite{WLS2018},  Wu and Hsu \cite{WH2016} and Zhang et al. \cite{ZLW2016}. For other related results on nonlocal dispersal epidemic models, we refer to for example Li and Yang \cite{ly2014} and Yang et al.  \cite{YLR2019}.

In addition, if the movements of   infectious agent and infectious human population happen only between adjacent spatial locations,  the classical Laplace diffusion operators are applied  instead of nonlocal dispersal operators.
For the results about classical diffusion epidemic models, we refer to Allen et al. \cite{ABL2007}, Cui et al. \cite{CLL2017}, Cui and Lou \cite{CL2016}, Hsu and Yang \cite{HY2013}, Wang \cite{Wang2011}, Xu and Zhao \cite{XZ2005} and Zhao and Wang \cite{ZW2004}.

Other fundamental properties  involved in this paper such as   existence and uniqueness of solution  in system \eqref{1.1} can be studied following the theories in Andreu-Vaillo et al. \cite{A2010}. The stability of  steady state  can be studied following the techniques in Yang and Li \cite{YL2017}, Yang et al. \cite{YLR2019}, and Zhao and Ruan \cite{ZR2018}.  For more classical results about nonlocal dispersal problems, we refer to Andreu-Vaillo et al. \cite{A2010}, Bates \cite{bates}, Fife \cite{Fife2003}, Kao et al. \cite{KLSH2010}, Li et al. \cite{LSW2010}, Murray \cite{Mur1993},  Shen and Zhang \cite{Shen-2010-JDE}, Wang \cite{Wang2002} and references cited therein.

\subsection{Preview of the main results}
\noindent

In this paper, we mainly study the influences of two important factors on  the spatial propagation in   model  \eqref{1.1}, namely nonlocal dispersals and  initial data.
Here the initial data stand for  the spatial density  of infectious agent and infectious human population  when epidemic breaks out and the nonlocal dispersals mean their diffusion strategies. Our contribution can be summarized in the following three aspects.

First, we consider the dependence of  spatial propagation on the nonlocal dispersals.
Usually, we can find the phenomenon of anisotropic  dispersal; for example,  the avian influenza viruses carried by migratory birds have a higher possibility to move along the flight route.
Then we can use the asymmetric dispersal to study this phenomenon.
Here the asymmetric  dispersal (kernel) means that for any spatial locations $x\in\mathbb R$,  the probability of  organism moving from  $0$ to $x$ is not equal to that  from  $0$ to  $-x$.
Since  diffusion  is the original driving force of spatial propagation, it is necessary to study the changes of spatial propagation caused by the asymmetry of dispersals in system \eqref{1.1}.

Before  it, we recall the known results on the spreading speeds of the following scalar  equation
\begin{equation}\label{01.3}
u_t=k*u-u+f(u),
\end{equation}
where $f(\cdot)$ is  Fisher-KPP type and $k(\cdot)$ is asymmetric.  Then there are  two constants $c_l^*$ and $c_r^*$ such that
\[
\lim\limits_{t\rightarrow+\infty}u(t,x+ct)=1~\text{for}~c_l^*<c<c_r^*,~~~
\lim\limits_{t\rightarrow+\infty}u(t,x+ct)=0~\text{for}~c<c_l^*~\text{or}~c>c_r^*,
\]
where $c_l^*$ and $c_r^*$ are called the {\it spreading speed to left} and {\it right}, respectively, see Lutscher et al. \cite{LPL2005}, Finkelshtein et al. \cite{FKT2015}, Shen and Zhang \cite{Shen-2010-JDE}. Furthermore, Coville et al. \cite{CDM2008} showed that asymmetric kernels  may cause nonpositive minimal wave speed  for traveling wave solutions (see also  Sun et al. \cite{SZLW2019} and Zhang et al. \cite{ZLW2017,ZLWS2019}). As is well known, the minimal wave speed for traveling wave solutions always equals the spreading speed in Fisher-KPP equations. Therefore, it is worth identifying the signs of  spreading speeds when the kernels are asymmetric. Recently, this problem was solved in our paper \cite{XLR2018}, and further, it was shown that the asymmetry level  of kernel  determines the signs of spreading speeds $c_l^*$ and $c_r^*$, which in turn determine the propagating directions of  solutions and influence the stability   of equilibrium states.
 \cite{MHO2019}

Motivated by  \cite{XLR2018,HKLL2015}, we   study the influences of asymmetric kernels on spatial propagation and identify the signs of spreading speeds. However, such a problem is more difficult than that in equation \eqref{01.3}, because the signs of spreading speeds  $c_l^*$ and $c_r^*$  in system \eqref{1.1} are  actually  influenced by two  kernels $k_1(\cdot)$ and $k_2(\cdot)$. In order to treat this problem, we  define
\[
\Lambda=\big\{\lambda\in\mathbb R~\big|~A(\lambda)B(\lambda)\geqslant g'(0)h'(0),~A(\lambda)<0,~B(\lambda)<0\big\},
\]
where
\[
A(\lambda)=\int_\mathbb R k_1(x)e^{\lambda x}dx-1-\alpha,~~B(\lambda)=\int_\mathbb R k_2(x)e^{\lambda x}dx-1-\beta.
\]
Then we show that the signs of $c_l^*$ and $c_r^*$ change with the number of elements in the set $\Lambda$ (see Theorem \ref{th2.2}) which is essentially determined  by the dispersal kernels $k_1(\cdot)$ and $k_2(\cdot)$. Particularly, when $k_1(\cdot)$ and $k_2(\cdot)$ are symmetric,  it follows that  $c^*\triangleq c_r^*=-c_l^*>0$.

We show that in system \eqref{1.1}, the asymmetric dispersals can influence the propagating directions of solutions and  the stability of steady states. More precisely,   denote the spatial region
\begin{equation}\label{01.4}
\Omega(t)\triangleq\{x\in\mathbb R~|~(u(t,x),v(t,x))\geqslant (\nu,\nu)\}~~\text{for}~t\geqslant0~~\text{with some}~\nu\in(0,1),
\end{equation}
and there is an interesting phenomenon:   $\Omega(t)$ propagates to both the left and the right of the $x$-axis for $c_l^*<0<c_r^*$; propagates only to the right for $0<c_l^*<c_r^*$; and propagates only to the left for $c_l^*<c_r^*<0$. For some appropriate initial data, when $c_l^*<0<c_r^*$, the steady state $(u,v)\equiv(1,1)$ is stable; namely  $(u(t,x),v(t,x))\to (1,1)$ as $t\to +\infty$, but when $0<c_l^*<c_r^*$ or $c_l^*<c_r^*<0$, we see that $(u(t,x),v(t,x))\to (0,0)$ as $t\to +\infty$  in any bounded spatial region.

Next, we study the dependence of  spatial propagation on  initial data.
Consider two types of initial data  which decay to zero exponentially or faster as $|x|\rightarrow+\infty$, but their decaying rates are different.
We establish a relationship between the spreading speed and the exponentially decaying rate $\lambda$ of initial data.
For the first type  whose decaying rate is large (this type includes compactly supported functions),  we show that the spreading speeds are   constants $c_l^*$ and $c_r^*$ (see Theorem \ref{th2.3}). For the second type  whose decaying rate is small, when $k_1(\cdot)$ and $k_2(\cdot)$ are symmetric, we show that the spreading speed $c(\lambda)$ is decreasing with respect to  $\lambda$, and  the minimum value of $c(\lambda)$ coincides with $c^*$ (see Theorem \ref{th2.5}).
In addition, we  obtain two other results of system \eqref{1.1}, namely the nonexistence of traveling wave solutions (Corollary \ref{th2.4}) and the  monotone property  of solutions (Theorem \ref{th2.6}).

These results give us guidance for  better control of the spatial propagation of epidemics. We see that even though the spatial concentration of the infectious agent and the infectious human population are very low at the spatial locations far away from $x=0$, they have an important influence on the spatial propagation of system \eqref{1.1}. Therefore, in order to slow down the spreading speed of epidemics,  the prevention  in low-density spatial regions is at least as important as   the treatment in high-density spatial regions.
In addition,  there are some applications of the theoretical results  to the control of  epidemics whose infectious agent is carried by migratory birds.
As we shall see in Section 5, it is possible that the epidemic spreads only along the flight route of  migratory birds and the spatial propagation against the flight route fails,
as long as the infectious humans are kept from moving frequently.

Finally, we show that  the spreading speed in this paper is studied by applying the comparison principle (see Lemma \ref{th4.1}) and constructing new   types of upper and lower solutions, instead of the classic  theories of spreading speeds which are established by Weinberger \cite{Wei1982} and developed by Lewis et al.\cite{LLW2002}, Li et al. \cite{LWL2005}, Liang and Zhao \cite{LZ2008,LZ2010}, Lui \cite{Lui1989}, Yi and Zou \cite{YZ2015}. Indeed, when we study the dependence of spreading speeds on   initial data, the upper and lower solutions method is more useful because it can deal with more general types of initial data (see e.g. Hamel and  Nadin \cite{HN2012}, Hamel and Roques \cite{HR2010} and Xu et al. \cite{XLR2018}).
We  present a new method to construct the lower solution of system \eqref{1.1} which spreads at a speed of $c_1$ or $c_2$, where $c_1\in(c_r^*-\epsilon,c_r^*)$ and $c_2\in(c_l^*,c_l^*+\epsilon)$. We also apply the new  {\it ``forward-backward spreading''} method which  was first given in our previous paper \cite{XLR2018}.
In this method, for any time $T>0$ and  any $\mu\in[0,1]$,  we   construct a lower solution $U_1(t,x)$ in the first period of time $[0,\mu T]$ which spreads at a speed of $c_1$, and in the second period of time $[\mu T,T]$ we   construct another lower solution $U_2(t,x)$ which spreads at a speed of $c_2$ and satisfies  $U_2(\mu T,x)\leqslant U_1(\mu T,x)$.
Then these two lower solutions can be regarded as a lower solution  defined in the time period [0,T]   whose speed is $\bar{c}=\mu c_1+(1-\mu)c_2$. Moreover, the arbitrariness of $\mu$ guarantees that $\bar{c}$ can be  any number in $[c_1, c_2]$.

All the  methods in this paper are applicable to the following $m$-species nonlocal dispersal cooperative system
\begin{equation}\label{01.5}
\left\{
\begin{aligned}
&\partial_t U(t,x)=K*U(t,x)-U(t,x)+F(U(t,x)),~~t>0,~x\in\mathbb{R},\\
&U(0,x)=U_0(x)=(u_{0,1}(x),\cdots,u_{0,m}(x)),~x\in\mathbb{R},
\end{aligned}
\right.
\end{equation}
where $m\geqslant2$, $U(t,x)=(u_1(t,x),\cdots,u_m(t,x))$ and  $K(x)=(k_1(x),\cdots,k_m(x))$. Here the function $F(U)=(f_1(U),\cdots,f_m(U))$ is cooperative and $F'(\mathbf 0)$ is an irreducible  matrix. Then it could be shown that the initial data $U_0(\cdot)$ and the dispersal kernel $K(\cdot)$ have   similar influences on the spreading speeds of system \eqref{01.5}. Actually, system \eqref{1.1} can be regarded as a special case of system \eqref{01.5} with $m=2$. The study of system \eqref{1.1} has  simpler calculations, but it shows clearer presentations of the new upper and lower solutions and the ``forward-backward spreading'' method.
Moreover, in system \eqref{01.5}, if the nonlocal dispersal operators are replaced by Laplacian  operators, all the methods still work. However, it is not necessary to apply the ``forward-backward spreading'' method, since we can use a monotone property    similar to Theorem  \ref{th2.6} instead (also see the proof of Theorem \ref{th2.5} for more details).

The rest of this paper is organized as follows. In Section 2, we present the definitions and some mathematical analysis of spreading speeds. Section 3  is devoted to the spatial propagation  for the first type of initial data and asymmetric kernels. In Section 4, we study the spatial propagation  for the second type of initial data and symmetric kernels. Meanwhile, we also prove some  monotone property result for system \eqref{1.1}. In Section 5, we give some applications of the theoretical results.

\section{The signs of spreading speeds}
\noindent

In this section, we define the notations of spreading speeds and  identify their signs.
First, we give some assumptions. Let $\alpha$ and $\beta$ be two positive constants. Throughout this paper,  we assume $g(\cdot)$ and $h(\cdot)$ are  two functions in $C^1([0,1])\cap C^{1+\delta_0}([0,p_0])$, where $\delta_0$ and $p_0$ are two constants in $(0,1)$, and satisfy that
\begin{itemize}
\item[(H1)] $g(0)=h(0)=0$, $h(1)/\alpha=g(1)/\beta=1$, $h\left(g(s)/\beta\right)-\alpha s>0$ for all $s\in(0,1)$;
\item[(H2)] $0<g(u)\leqslant g'(0)u$ and  $g'(u)\geqslant0$ for all $u\in(0,1)$,  $0<h(v)\leqslant h'(0)v$ and $h'(v)\geqslant 0$ for all $v\in(0,1)$.
\end{itemize}
From (H1) and (H2),   system \eqref{1.1} is monostable and  $(u(t,x),v(t,x))\equiv(1,1)$ is the unique nontrivial steady state. Moreover, we have that $\alpha\beta<h'(0)g'(0)$.
Suppose   $k_1(\cdot)$ and $k_2(\cdot)$ are two continuous and nonnegative dispersal kernel functions  satisfying
\begin{itemize}
  \item[(K1)] $\int_{\mathbb R}k_i(x)dx=1$ and $\int_\mathbb{R}k_i(x)e^{\lambda x}dx<+\infty$ for any $\lambda\in\mathbb R$ and  $i\in\{1,2\}$;
  \item[(K2)]  there are $x_i^+\in\mathbb R^+$ and $x_i^-\in\mathbb R^-$ such that $k_i(x_i^\pm)>0$, for each $i\in\{1,2\}$.
\end{itemize}
We assume the initial data $u_0(\cdot)$ and $v_0(\cdot)$ are two continuous functions which satisfy that   $0\leqslant u_0(x)\leqslant1$, $0\leqslant v_0(x)\leqslant1$ for all  $x\in\mathbb R$ and
\[
u_0(x)\rightarrow0,~~v_0(x)\rightarrow0~\text{as}~|x|\rightarrow+\infty.
\]

Now define
\begin{equation}\label{02.1}
c(\lambda)=\frac{1}{\lambda}D(\lambda)~~\text{for}~\lambda\neq0,
\end{equation}
where
\[
D(\lambda)=\frac{1}{2}\left[A(\lambda)+B(\lambda)+\sqrt{(A(\lambda)-B(\lambda))^2+4 g'(0)h'(0)}\right],
\]
\begin{equation}\label{02.6}
A(\lambda)=\int_\mathbb R k_1(x)e^{\lambda x}dx-1-\alpha,~~B(\lambda)=\int_\mathbb R k_2(x)e^{\lambda x}dx-1-\beta.
\end{equation}
It follows that  $D(\lambda)>A(\lambda)$ and $D(\lambda)>B(\lambda)$ for  $\lambda\in\mathbb R$. Particularly, if  $k_1(\cdot)$ and $k_2(\cdot)$ are symmetric,  then $c(\lambda)=-c(-\lambda)$ for   $\lambda\neq0$.

\begin{theorem}\label{th2.1}
There are two unique  constants $\lambda_r^*\in\mathbb R^+$ and $\lambda_l^*\in\mathbb R^-$ such that
\begin{equation}\label{02.2}
c_r^*\triangleq c(\lambda_r^*)=\inf\limits_{\lambda\in\mathbb R^+}\{c(\lambda)\},~~c_l^*\triangleq c(\lambda_l^*)=\sup\limits_{\lambda\in\mathbb R^-}\{c(\lambda)\},
\end{equation}
and $c'(\lambda)<0$ for   $\lambda\in(\lambda_l^*,0)\cup(0,\lambda_r^*)$. Moreover, we have $c_l^*<c_r^*$.
Particularly, if $k_1(\cdot)$ and $k_2(\cdot)$ are symmetric, then  $c^*\triangleq c_r^*=-c_l^*>0$ and $\lambda^*\triangleq\lambda_r^*=-\lambda_l^*$.
\end{theorem}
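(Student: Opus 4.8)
The plan is to analyze the function $c(\lambda)=D(\lambda)/\lambda$ on $\mathbb{R}^+$ and $\mathbb{R}^-$ separately, using convexity/monotonicity properties inherited from $A(\lambda)$ and $B(\lambda)$. First I would record the basic properties of $A$ and $B$: by (K1) each is finite and smooth on all of $\mathbb{R}$, and since $\int k_i(x)e^{\lambda x}dx$ is a strictly convex function of $\lambda$ (its second derivative is $\int x^2 k_i(x)e^{\lambda x}dx>0$, using (K2) to rule out the degenerate case), both $A$ and $B$ are strictly convex with $A(0)=-\alpha<0$, $B(0)=-\beta<0$. Next I would show $D(\lambda)$ is strictly convex: it is the larger root of the quadratic $(z-A(\lambda))(z-B(\lambda))=g'(0)h'(0)$, i.e. $D=\tfrac12[A+B+\sqrt{(A-B)^2+4g'(0)h'(0)}]$; writing $D$ as a composition, one checks that $D$ is a nondecreasing convex function of the pair $(A,B)$ (the map $(a,b)\mapsto \tfrac12(a+b+\sqrt{(a-b)^2+4\gamma})$ is convex and componentwise nondecreasing for fixed $\gamma>0$), and a convex nondecreasing function of convex functions is convex; strictness comes from the strict convexity of $A,B$. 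Also $D(0)=\tfrac12[-\alpha-\beta+\sqrt{(\alpha-\beta)^2+4g'(0)h'(0)}]>0$ since $\alpha\beta<g'(0)h'(0)$ (from (H1)–(H2)), and $D(\lambda)\to+\infty$ as $\lambda\to\pm\infty$.

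With $D$ strictly convex, positive at $0$, and superlinear at $\pm\infty$, I would analyze $c(\lambda)=D(\lambda)/\lambda$ on $\lambda>0$. Here $c(\lambda)\to+\infty$ as $\lambda\to0^+$ (since $D(0)>0$) and $c(\lambda)\to+\infty$ as $\lambda\to+\infty$ (superlinearity of $D$). The derivative is $c'(\lambda)=[\lambda D'(\lambda)-D(\lambda)]/\lambda^2$; set $\phi(\lambda)=\lambda D'(\lambda)-D(\lambda)$, so $\phi(0^+)=-D(0)<0$ and $\phi'(\lambda)=\lambda D''(\lambda)>0$ for $\lambda>0$ by strict convexity. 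Hence $\phi$ is strictly increasing on $\mathbb{R}^+$, starting negative; since $c(\lambda)\to+\infty$ at both ends, $\phi$ must become positive, so it has a unique zero $\lambda_r^*\in\mathbb{R}^+$. This gives that $c$ is strictly decreasing on $(0,\lambda_r^*)$ and strictly increasing on $(\lambda_r^*,+\infty)$, so $\lambda_r^*$ is the unique minimizer and $c_r^*=c(\lambda_r^*)=\inf_{\lambda>0}c(\lambda)$, with $c'(\lambda)<0$ on $(0,\lambda_r^*)$. The analysis on $\lambda<0$ is symmetric: $c(\lambda)\to-\infty$ as $\lambda\to0^-$ and as $\lambda\to-\infty$; the same function $\phi(\lambda)=\lambda D'(\lambda)-D(\lambda)$ satisfies $\phi(0^-)=-D(0)<0$ and $\phi'(\lambda)=\lambda D''(\lambda)<0$ for $\lambda<0$, so $\phi$ is strictly decreasing on $\mathbb{R}^-$ and has a unique zero $\lambda_l^*\in\mathbb{R}^-$; there $c$ attains its supremum over $\mathbb{R}^-$, and $c'(\lambda)<0$ on $(\lambda_l^*,0)$.

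For $c_l^*<c_r^*$: since $c'<0$ on $(\lambda_l^*,0)$ and on $(0,\lambda_r^*)$, and $c\to+\infty$ as $\lambda\to0^+$ while $c\to-\infty$ as $\lambda\to0^-$, one compares values across $0$. Concretely, for small $\lambda>0$ we have $c(\lambda)$ large positive while $c_l^*=\sup_{\mu<0}c(\mu)$; I would argue $c_l^*<0<c_r^*$ is \emph{not} claimed in general, so instead I use the following: $c_r^*=\inf_{\lambda>0}c(\lambda)\geqslant D(0)\cdot\liminf\ldots$ is not quite it either — the clean argument is that $\lambda c(\lambda)=D(\lambda)$ is strictly convex with $D(0)>0$, so the supporting line of $D$ at $\lambda_l^*$ lies below $D$ and has slope $c_l^*$ (since $D(\lambda_l^*)=\lambda_l^* c_l^*$ and tangency forces the line through the origin), and likewise the tangent line at $\lambda_r^*$ through the origin has slope $c_r^*$; strict convexity of $D$ together with $D(0)>0$ forces the origin to lie strictly below the graph, so the two tangent lines from the origin have slopes satisfying $c_l^*<c_r^*$. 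Finally, in the symmetric case $k_i(-x)=k_i(x)$ gives $A(-\lambda)=A(\lambda)$, $B(-\lambda)=B(\lambda)$, hence $D(-\lambda)=D(\lambda)$ and $c(-\lambda)=-c(\lambda)$; this immediately yields $\lambda_l^*=-\lambda_r^*$ and $c_l^*=-c_r^*$, and positivity $c_r^*>0$ follows from $c_l^*<c_r^*=-c_l^*$. The main obstacle I anticipate is establishing strict convexity of $D$ cleanly — i.e. justifying that the root-selection formula preserves convexity — and handling the limiting behaviour of $c(\lambda)$ at $\lambda=0^\pm$ and $\pm\infty$ rigorously; once those are in place, the monotonicity and uniqueness of $\lambda_r^*,\lambda_l^*$ and the inequality $c_l^*<c_r^*$ follow from elementary convex-analysis arguments on $D(\lambda)=\lambda c(\lambda)$.
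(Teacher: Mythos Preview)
Your proposal is correct and follows essentially the same route as the paper: both arguments hinge on establishing $D''(\lambda)>0$, then study $\phi(\lambda)=\lambda D'(\lambda)-D(\lambda)=\lambda^2 c'(\lambda)$, use $\phi(0)=-D(0)<0$ and $\phi'(\lambda)=\lambda D''(\lambda)$ to obtain a unique critical point on each half-line, and conclude $c_l^*<c_r^*$ via $c_l^*=D'(\lambda_l^*)<D'(\lambda_r^*)=c_r^*$ (your tangent-line-from-the-origin picture is exactly this). The only genuine difference is how $D''>0$ is obtained: the paper computes $2D''$ explicitly and bounds it below by $\min\{A'',B''\}$ using $|A-B|<\sqrt{(A-B)^2+4g'(0)h'(0)}$, whereas you invoke that $(a,b)\mapsto\tfrac12(a+b+\sqrt{(a-b)^2+4\gamma})$ is convex and componentwise nondecreasing, composed with the strictly convex $A,B$. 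Your composition argument is valid, but note that strict convexity of the composite requires \emph{strict} monotonicity of the outer map in each coordinate (which does hold here since the partials lie in $(0,1)$); you should make that explicit rather than just saying ``strictness comes from the strict convexity of $A,B$.''
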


\begin{proof}
This proof is based on  some mathematical analysis of the functions $c'(\lambda)$ and $c''(\lambda)$.
First we prove that
\begin{equation}\label{03.1}
\lim\limits_{\lambda\rightarrow0^+}c'(\lambda)=-\infty~~\text{and}~~\lim\limits_{\lambda\rightarrow0^-}c'(\lambda)=-\infty.
\end{equation}
By  some simple  calculations, we see  the functions $A(\lambda)$, $B(\lambda)$, $A'(\lambda)$ and $B'(\lambda)$ are  uniformly bounded as $\lambda\rightarrow 0$. Then the functions  $D(\lambda)$ and $D'(\lambda)$ are also uniformly bounded as $\lambda\rightarrow 0$.
Therefore, we easily get \eqref{03.1}   from $c'(\lambda)=\lambda^{-1}D'(\lambda)-\lambda^{-2}D(\lambda)$ and $D(0)>0$.

Now we show that
\begin{equation}\label{03.2}
c'(\lambda)>0~~~\text{for $|\lambda|$ large enough}.
\end{equation}
From the definitions of functions $c(\lambda)$ and $D(\lambda)$, we have
\[
\begin{aligned}
2\lambda^2c'(\lambda)&=2(\lambda D'(\lambda)-D(\lambda))\\
&=(\lambda A'-A)+(\lambda B'-B)+\frac{(A-B)\big[(\lambda A'-A)-(\lambda B'-B)\big]-4 g'(0)h'(0)}{[(A-B)^2+4 g'(0)h'(0)]^{\frac{1}{2}}}.\\
\end{aligned}
\]
Then from $|A-B|<[(A-B)^2+4 g'(0)h'(0)]^{\frac{1}{2}}$, it follows that
\[
\lambda^2c'(\lambda)>\min\big\{\lambda A'(\lambda)-A(\lambda)-\sqrt{g'(0)h'(0)},~\lambda B'(\lambda)-B(\lambda)-\sqrt{g'(0)h'(0)}\big\}.
\]
By some simple calculations, we have
\[
\begin{aligned}
&\lambda A'(\lambda)-A(\lambda)=\int_\mathbb R k_1(x)e^{\lambda x}(\lambda x-1)dx+1+\alpha\rightarrow+\infty~~\text{as}~|\lambda|\rightarrow+\infty, \\
&\lambda B'(\lambda)-B(\lambda)=\int_\mathbb R k_2(x)e^{\lambda x}(\lambda x-1)dx+1+\beta\rightarrow+\infty~~\text{as}~|\lambda|\rightarrow+\infty,
\end{aligned}
\]
which imply that \eqref{03.2} holds.

Next we try to prove that
\begin{equation}\label{03.3}
\lambda c''(\lambda)>0~\text{for}~\lambda\neq0,~\text{provided}~c'(\lambda)=0.
\end{equation}
Indeed, since  $c''(\lambda)=\lambda^{-1}[D''(\lambda)-2c'(\lambda)]$, we just need to prove that
\begin{equation}\label{03.4}
D''(\lambda)>0~~\text{for all}~~\lambda\in\mathbb R.
\end{equation}
From the definitions of functions $A(\lambda)$, $B(\lambda)$ and $D(\lambda)$, it follows that for all $\lambda\in\mathbb R$,  $A''(\lambda)>0$, $B''(\lambda)>0$ and
\[
2D''=A''+B''+\frac{(A-B)(A''-B'')}{[(A-B)^2+4 g'(0)h'(0)]^{\frac{1}{2}}}+\frac{4h'(0)g'(0)(A'-B')^2}{[(A-B)^2+4 g'(0)h'(0)]^{\frac{3}{2}}}.
\]
By combining with $|A-B|<[(A-B)^2+4 g'(0)h'(0)]^{\frac{1}{2}}$, we get
\[
D''(\lambda)\geqslant \min\{A''(\lambda),B''(\lambda)\}>0 ~~\text{for all}~~\lambda\in\mathbb R.
\]
Then we get \eqref{03.3}.

It follows from \eqref{03.3} that there is at most one constant  $\lambda_r^*$ in $\mathbb R^+$ such that $c'(\lambda_r^*)=0$. Meanwhile, \eqref{03.1} and \eqref{03.2} imply the existence of this constant. Similarly, there is a unique constant $\lambda_l^*\in\mathbb R^-$ such that $c'(\lambda_l^*)=0$.
Therefore, we have
\begin{equation}\label{03.5}
c'(\lambda)\left\{
\begin{aligned}
&>0,&&\lambda\in(-\infty,\lambda_l^*)\cup(\lambda_r^*,+\infty),\\
&=0,&&\lambda=\lambda_l^*~\text{or}~\lambda=\lambda_r^*,\\
&<0,&&\lambda\in(\lambda_l^*,0)\cup(0,\lambda_r^*).
\end{aligned}
\right.
\end{equation}
Then we obtain \eqref{02.2} from \eqref{03.5}. Moreover, since $c'(\lambda)=\lambda^{-1}[D'(\lambda)-c(\lambda)]$ and $c'(\lambda_l^*)=c'(\lambda_r^*)=0$, we have $ c_l^*=c(\lambda_l^*)=D'(\lambda_l^*)~~\text{and}~~c_r^*=c(\lambda_r^*)=D'(\lambda_r^*).$ From \eqref{03.4} and $\lambda_l^*<0<\lambda_r^*$, it follows that  $c_l^*<c_r^*$. Particularly, if  $k_1(\cdot)$ and $k_2(\cdot)$ are symmetric, we have $D(\lambda)=D(-\lambda)$ for $\lambda\in\mathbb R$. Then  $c(\lambda)+c(-\lambda)=0$ for $\lambda\neq 0$, which implies  $\lambda_r^*=-\lambda_l^*$ and $c_r^*=-c_l^*>0$.
\end{proof}

In order to  identify the signs of  $c_l^*$ and $c_r^*$, we define a set
\[
\Lambda\triangleq\big\{\lambda\in\mathbb R~\big|~A(\lambda)B(\lambda)\geqslant g'(0)h'(0),~A(\lambda)<0,~B(\lambda)<0\big\}.
\]
Now we give a relationship between the set $\Lambda$ and the signs of  $c_l^*$ and $c_r^*$.
\begin{theorem}\label{th2.2}
We have either $\Lambda\subseteq \mathbb R^+$ or $\Lambda\subseteq \mathbb R^-$. Moreover,
\begin{itemize}
  \item[(i)] if $\Lambda=\varnothing$, then $c_l^*<0<c_r^*${\rm;}
  \item[(ii)] if  $\Lambda\cap \mathbb R^+$ is a singleton set, then $c_l^*<c_r^*=0${\rm;}
  \item[(iii)] if $\Lambda\cap \mathbb R^-$ is a singleton set, then $0= c_l^*<c_r^*${\rm;}
  \item[(iv)] if~~$\mathrm{int}(\Lambda)\cap \mathbb R^+\neq\varnothing$, then $c_l^*<c_r^*<0${\rm;}
  \item[(v)] if~~$\mathrm{int}(\Lambda)\cap \mathbb R^-\neq\varnothing$, then $0<c_l^*<c_r^*$.
\end{itemize}
\end{theorem}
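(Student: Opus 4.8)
The plan is to express both $\Lambda$ and the signs of $c_l^*,c_r^*$ through the single function $D(\lambda)$. The decisive first step is the identity
\[
\Lambda=\big\{\lambda\in\mathbb R~\big|~D(\lambda)\leqslant0\big\}.
\]
To see it, note that $D(\lambda)\leqslant0$ is equivalent to $\sqrt{(A(\lambda)-B(\lambda))^2+4g'(0)h'(0)}\leqslant-(A(\lambda)+B(\lambda))$; since the left side is nonnegative, this forces $A(\lambda)+B(\lambda)\leqslant0$, and squaring then gives the equivalent inequality $A(\lambda)B(\lambda)\geqslant g'(0)h'(0)$. As $g'(0)h'(0)>0$ (a consequence of (H2)), $A(\lambda)B(\lambda)\geqslant g'(0)h'(0)$ forces $A(\lambda),B(\lambda)$ to have the same sign, and together with $A(\lambda)+B(\lambda)\leqslant0$ this forces $A(\lambda)<0$ and $B(\lambda)<0$; conversely $A(\lambda)<0,\,B(\lambda)<0$ already imply $A(\lambda)+B(\lambda)<0$. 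Hence the two descriptions of $\Lambda$ agree.

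Next I would exploit the qualitative behaviour of $D$. It is strictly convex, since $D''(\lambda)>0$ for all $\lambda$ (established in the proof of Theorem \ref{th2.1}); it tends to $+\infty$ as $|\lambda|\to+\infty$ because $D(\lambda)>A(\lambda)\to+\infty$ by (K2); and $D(0)>0$, which is precisely $\alpha\beta<g'(0)h'(0)$. Therefore $\Lambda=\{D\leqslant0\}$ is a bounded interval not containing $0$, and being connected it lies wholly in $\mathbb R^+$ or wholly in $\mathbb R^-$; this is the first assertion. Strict convexity moreover gives $\mathrm{int}(\Lambda)=\{\lambda:D(\lambda)<0\}$, and shows that $\Lambda\cap\mathbb R^+$ (resp.\ $\Lambda\cap\mathbb R^-$) is a singleton exactly when $D$ is nonnegative on $(0,+\infty)$ (resp.\ $(-\infty,0)$) while vanishing at some point of it.

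Then I would read off the signs of the speeds. By Theorem \ref{th2.1}, $c_r^*=c(\lambda_r^*)=\inf_{\lambda\in\mathbb R^+}c(\lambda)$ with $\lambda_r^*>0$, and since $c(\lambda)=D(\lambda)/\lambda$, the sign of $c_r^*$ is the sign of $D(\lambda_r^*)$, while $c_r^*\leqslant D(\lambda_0)/\lambda_0$ for every $\lambda_0>0$. This yields $c_r^*>0\iff D>0$ on $(0,+\infty)$ ($\iff\Lambda\cap\mathbb R^+=\varnothing$); $c_r^*<0\iff D<0$ somewhere on $(0,+\infty)$ ($\iff\mathrm{int}(\Lambda)\cap\mathbb R^+\neq\varnothing$); and $c_r^*=0$ in the remaining case ($\iff\Lambda\cap\mathbb R^+$ is a singleton). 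The same reasoning at $\lambda_l^*<0$ gives the symmetric statement for $c_l^*$ on $(-\infty,0)$. Combining this with the first assertion and with $c_l^*<c_r^*$ from Theorem \ref{th2.1}, the five cases follow immediately: $\Lambda=\varnothing$ gives $c_l^*<0<c_r^*$; a singleton $\Lambda\cap\mathbb R^+$ forces $\Lambda\subset\mathbb R^+$, hence $c_l^*<0$ and $c_r^*=0$, and symmetrically for a singleton in $\mathbb R^-$; $\mathrm{int}(\Lambda)\cap\mathbb R^+\neq\varnothing$ gives $c_r^*<0$, hence $c_l^*<c_r^*<0$, and symmetrically $\mathrm{int}(\Lambda)\cap\mathbb R^-\neq\varnothing$ gives $0<c_l^*<c_r^*$.

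The main obstacle is the very first identity $\Lambda=\{D\leqslant0\}$: one must handle the squaring carefully (the implication $D(\lambda)\leqslant0\Rightarrow A(\lambda)<0,\,B(\lambda)<0$ genuinely uses $g'(0)h'(0)>0$ and $A(\lambda)+B(\lambda)\leqslant0$ jointly), and the clean split between $\mathrm{int}(\Lambda)\neq\varnothing$ and $\Lambda$ being a single point rests on the strict convexity of $D$. After that, everything is bookkeeping built on ``$D$ strictly convex, positive at $0$, infinite at $\pm\infty$''.
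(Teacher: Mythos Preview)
Your proof is correct and follows essentially the same route as the paper: both hinge on the identification of $\Lambda$ with $\{\lambda:D(\lambda)\leqslant0\}$ (equivalently, $\mathrm{int}(\Lambda)=\{D<0\}$ and $\partial\Lambda=\{D=0\}$) and then read off the signs of $c_l^*,c_r^*$ from $c(\lambda)=D(\lambda)/\lambda$.

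The one genuine difference is how the interval property of $\Lambda$ is obtained. The paper argues directly on the product $A(\lambda)B(\lambda)$: it introduces $\Lambda^A=\{A<0\}$, $\Lambda^B=\{B<0\}$, observes these are open intervals by $A''>0,\,B''>0$, and then checks that at any critical point of $AB$ inside $\Lambda^A\cap\Lambda^B$ one has $(AB)''<0$, so $\{AB\geqslant g'(0)h'(0)\}\cap\Lambda^A\cap\Lambda^B$ is a closed interval. You instead go through $D$: having already recorded $D''>0$ from the proof of Theorem~\ref{th2.1}, the sublevel set $\{D\leqslant0\}$ is automatically a closed interval, and $D(0)>0$ excludes~$0$. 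Your route is a bit more economical since it recycles an inequality already established, whereas the paper's computation \eqref{03.6} is an independent calculation; on the other hand, the paper's argument is self-contained and does not rely on importing $D''>0$ from the previous proof. Either way the remaining case analysis is identical.
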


\begin{proof}
First, we prove that either $\Lambda\subseteq \mathbb R^+$ or $\Lambda\subseteq \mathbb R^-$. Since $A(0)B(0)=\alpha\beta<h'(0)g'(0)$, we have  $0\notin\Lambda$. So it is sufficient  to prove that the set $\Lambda$ is a closed interval in $\mathbb R$. For this purpose, we denote
\[
\Lambda^A=\{\lambda\in\mathbb R~\big|~A(\lambda)<0\}~~\text{and}~~\Lambda^B=\{\lambda\in\mathbb R~\big|~B(\lambda)<0\}.
\]
Then we have $\Lambda\subseteq\Lambda^A\cap\Lambda^B$.
Some   calculations show  that $A''(\lambda)>0$ and $B''(\lambda)>0$ for all $\lambda\in\mathbb R$, which imply that the sets $\Lambda^A$ and $\Lambda^B$ are two open intervals in $\mathbb R$.
For any $\lambda\in\Lambda^A\cap\Lambda^B$, if $\big(A(\lambda)B(\lambda)\big)'=A'(\lambda)B(\lambda)+A(\lambda)B'(\lambda)=0$, then we have
\begin{equation}\label{03.6}
\big(A(\lambda)B(\lambda)\big)''=A''(\lambda)B(\lambda)+A(\lambda)B''(\lambda)+2A'(\lambda)B'(\lambda)<0.
\end{equation}
Therefore, the set $\Lambda$ is a closed interval in $\mathbb R$, which means that either $\Lambda\subseteq \mathbb R^+$ or $\Lambda\subseteq \mathbb R^-$.

Now we determine the signs of $c_l^*$ and $c_r^*$. From the definition of the function $D(\lambda)$, we have
\[
D(\lambda)<0\iff A(\lambda)+B(\lambda)<0~\text{and}~A(\lambda)B(\lambda)> g'(0)h'(0)\iff\lambda\in\mathrm{int}(\Lambda).
\]
Similarly, we can get
\[
D(\lambda)=0\iff A(\lambda)+B(\lambda)<0~\text{and}~A(\lambda)B(\lambda)= g'(0)h'(0)\iff\lambda\in\partial\Lambda.
\]
Then it follows that
\[
D(\lambda)>0\iff\lambda\notin \Lambda.
\]
Therefore, if $\Lambda=\varnothing$, then $D(\lambda)>0$ for all $x\in\mathbb R$, which implies that   $c_l^*<0<c_r^*$. If there is some constant $\lambda_0\in\mathbb R^+$ such that $\Lambda\cap\mathbb R^+=\{\lambda_0\}=\partial\Lambda$,   we have $
c(\lambda_0)=0=\inf\limits_{\lambda\in\mathbb R^+}\{c(\lambda)\}=c_r^*>c_l^*.
$
If there is some constant $\lambda_0\in\mathrm{int}(\Lambda)\cap \mathbb R^+$, then it follows that
$
0>c(\lambda_0)\geqslant c_r^*>c_l^*.
$
Similarly, we can get Theorem \ref{th2.2} (iii) and (v).
\end{proof}

\begin{remark}
\rm
From Theorem \ref{th2.2} we can see that the signs of $c_l^*$ and $c_r^*$ change with the number of elements in the set $\Lambda$, which is essentially determined  by the kernels  $k_1(\cdot)$ and $k_2(\cdot)$. Moreover, from (i) we have  $c_l^*<0<c_r^*$ when
\begin{equation}\label{02.3}
(1+\alpha-E(k_1))(1+\beta-E(k_2))<g'(0)h'(0),
\end{equation}
where $E(k)$ can describe  the asymmetry level of $k(\cdot)$ and is defined by
\[
E(k)=\inf\left\{\int_{\mathbb R}k(x)e^{\lambda x}dx~\big|~\lambda\in\mathbb R\right\}.
\]
It is easy to check that $E(k)\in[0,1]$. Particularly, when $k_1(\cdot)$ and $k_2(\cdot)$ are symmetric,  we have   $E(k_1)=E(k_2)=1$, which verifies that \eqref{02.3} is right by $\alpha\beta<h'(0)g'(0)$.
\end{remark}

\section{First type of initial data and  asymmetric kernels case}
\noindent

In this section, we establish the spatial propagation result of system \eqref{1.1} for the first type of initial data and asymmetric  kernels by constructing  new   types of upper and lower solutions   and using the ``forward-backward spreading'' method.
Now we present the main theorem.

\begin{theorem}\label{th2.3}
Assume that {\rm(H1)}, {\rm(H2)}, {\rm(K1)} and {\rm{(K2)}} hold. If $u_0(\cdot)$ and $v_0(\cdot)$ satisfy  that $u_0(x_0)>0$, $v_0(x_0)>0$ for some constant $x_0\in\mathbb R$ and there are two positive constants  $x_1$ and $\Gamma_0$ such that
\[
\max\big\{u_0(x),~v_0(x)\big\}e^{\lambda_l^*x}\leqslant  \Gamma_0~\text{for}~x \leqslant -x_1,~~\max\big\{u_0(x),~v_0(x)\big\}e^{\lambda_r^*x}\leqslant  \Gamma_0~\text{for}~x \geqslant x_1,
\]
then for any small  $\epsilon>0$ there is a constant $\nu\in(0,1)$ such that  the solution of system \eqref{1.1} has the following  properties:
\[
\left\{\begin{aligned}
&\lim\limits_{t\rightarrow+\infty}\sup\limits_{x-x_0\leqslant(c_l^*-\epsilon)t}(u(t, x),v(t,x))=(0,0),\\
&\inf\limits_{(c_l^*+\epsilon)t \leqslant x-x_0\leqslant(c_r^*-\epsilon)t}(u(t, x),v(t,x))\geqslant(\nu, \nu)~~\text{for all}~~t>0,\\
&\lim\limits_{t\rightarrow+\infty}\sup\limits_{x-x_0\geqslant(c_r^*+\epsilon)t}(u(t, x),v(t,x))=(0,0).
\end{aligned}
\right.
\]
\end{theorem}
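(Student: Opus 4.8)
The plan is to run everything through the comparison principle for cooperative nonlocal systems (Lemma~\ref{th4.1}): it suffices to trap $(u,v)$ between suitable super- and sub-solutions of \eqref{1.1}. By (H2) the nonlinearities satisfy $0\le h(v)\le h'(0)v$, $0\le g(u)\le g'(0)u$ and $h'\ge0,\,g'\ge0$ on $[0,1]$, so the linear system $u_t=k_1*u-(1+\alpha)u+h'(0)v$, $v_t=k_2*v-(1+\beta)v+g'(0)u$ dominates \eqref{1.1} from above. Substituting $(u,v)=e^{-\lambda(x-x_0-ct)}(p,q)$ into this linear system leads to the eigenvalue problem for the $2\times2$ matrix $M(\lambda)$ whose first row is $(A(\lambda),h'(0))$ and second row is $(g'(0),B(\lambda))$; since the off-diagonal entries are positive, its Perron eigenvalue is exactly $D(\lambda)$ from \eqref{02.1}, with a strictly positive eigenvector $(p(\lambda),q(\lambda))$, and the exponential solves the linear system precisely when $c=c(\lambda)=D(\lambda)/\lambda$.

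\emph{The first and third limits.} Fix $\lambda=\lambda_r^*\in\mathbb R^+$, for which $c(\lambda_r^*)=c_r^*$, and set $\overline U(t,x)=\min\{(1,1),\,\Gamma\,(p(\lambda_r^*),q(\lambda_r^*))\,e^{-\lambda_r^*(x-x_0-c_r^*t)}\}$ componentwise. As the infimum of the steady state $(1,1)$ and an exact solution of the linear system, $\overline U$ is a supersolution of \eqref{1.1} (the inequalities $h'(0)\ge h(1)=\alpha$ and $g'(0)\ge g(1)=\beta$, which follow from (H1)--(H2), keep the truncated part consistent). Using $\max\{u_0,v_0\}e^{\lambda_r^*x}\le\Gamma_0$ for $x\ge x_1$ and $0\le u_0,v_0\le1$ for $x<x_1$, one checks that $\overline U(0,\cdot)\ge(u_0,v_0)$ once $\Gamma$ is large enough; hence $(u,v)(t,x)\le\overline U(t,x)$, and on $\{x-x_0\ge(c_r^*+\epsilon)t\}$ this gives $(u,v)(t,x)\le\Gamma(p,q)e^{-\lambda_r^*\epsilon t}\to(0,0)$. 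The mirror supersolution built from $\lambda=\lambda_l^*\in\mathbb R^-$ (so $c(\lambda_l^*)=c_l^*$) and the decay of $u_0,v_0$ for $x\le-x_1$ yields, on $\{x-x_0\le(c_l^*-\epsilon)t\}$, the bound $\Gamma(p,q)e^{\lambda_l^*\epsilon t}\to(0,0)$, which is the first limit.

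\emph{The second inequality.} Since $u_0(x_0),v_0(x_0)>0$ and, by (H2), $g,h>0$ on $(0,1)$, a standard iteration of the variation-of-constants formula for \eqref{1.1} together with (K2) shows $u(t,x)>0$ and $v(t,x)>0$ for every $t>0$ and $x\in\mathbb R$; by continuity and $u(0,x_0),v(0,x_0)>0$, the infimum of $(u,v)$ over $\{(c_l^*+\epsilon)t\le x-x_0\le(c_r^*-\epsilon)t,\ 0<t\le T_0\}$ is a positive constant for any $T_0>0$, so only large $t$ remains. For this we construct \emph{compactly supported} lower solutions propagating at subcritical speeds. By Theorem~\ref{th2.1}, $c(\lambda)$ has a strict interior minimum $c_r^*$ at $\lambda_r^*$ with $D''>0$; consequently, for each $c_1$ slightly below $c_r^*$ the dispersion relation $c\lambda=D(\lambda)$ admits a conjugate pair of roots near $\lambda_r^*$, which lets us use a one-hump profile $\kappa\,e^{-\lambda_r^*\xi}\varphi(\xi)$ in the moving variable $\xi=x-x_0-c_1t$, with $\varphi$ a truncated cosine-like bump, extended by $(0,0)$ outside the hump. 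For $\kappa$ small this is a (generalized) lower solution of \eqref{1.1} travelling rightward at speed $c_1\in(c_r^*-\epsilon,c_r^*)$; the symmetric construction at $\lambda_l^*$ gives leftward lower solutions at speeds $c_2\in(c_l^*,c_l^*+\epsilon)$. To reach every intermediate speed $\bar c\in[c_l^*+\epsilon,c_r^*-\epsilon]$ we invoke the ``forward--backward spreading'' method: writing $\bar c=\mu c_1+(1-\mu)c_2$, on $[s,s+\mu T]$ we run the rightward hump seeded from a small positive bump of $(u,v)$ near $x_0$ at time $s$ (available by the positivity above), and on $[s+\mu T,s+T]$ we run the leftward hump with initial datum below the endpoint of the first; by Lemma~\ref{th4.1} the concatenation is a lower solution on $[s,s+T]$ whose hump lies over $x_0+\bar cT$ at time $s+T$, where it is $\ge(\nu,\nu)$. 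Letting $s,T,\mu,\bar c$ vary, these moving humps cover the whole wedge $\{(c_l^*+\epsilon)t\le x-x_0\le(c_r^*-\epsilon)t\}$ for all large $t$ with one common $\nu\in(0,1)$; combined with the bounded-time estimate, the second inequality follows for all $t>0$.

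\emph{Main difficulty.} The crux is the construction of the subcritical-speed lower solutions for the \emph{nonlocal} operators: in contrast with the Laplacian case, a naively truncated oscillating exponential fails the differential inequality near the edge of its support because the convolution samples values from outside it, so one must either work with generalized (one-sided) sub/supersolutions compatible with Lemma~\ref{th4.1} or replace the profile by a more carefully designed compactly supported function --- this is the new type of lower solution the paper introduces. The second technical burden is to make the forward--backward gluing quantitative, i.e. to produce the threshold $\nu$ \emph{uniformly} in the target speed $\bar c\in[c_l^*+\epsilon,c_r^*-\epsilon]$, in the seeding time $s$, and in the horizon $T$; this requires uniform-in-$(c_1,c_2)$ control of the hump amplitude $\kappa$ and of the time the hump needs to rise to height $\nu$, together with the matching condition $U_2(s+\mu T,\cdot)\le U_1(s+\mu T,\cdot)$ being satisfiable with room to spare.
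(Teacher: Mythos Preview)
Your overall architecture matches the paper: comparison principle, exponential-type supersolutions at the critical exponents $\lambda_r^*,\lambda_l^*$ for the outer limits, compactly supported travelling subsolutions plus the forward--backward concatenation for the inner bound. The supersolution part is essentially identical to the paper's (your Perron eigenvector $(p,q)$ is the paper's $(1,b(\lambda))$), and the forward--backward idea is the same. However, your lower-solution step is where the proposal diverges and has a real gap.

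\textbf{The subsolution construction.} You propose profiles $\kappa e^{-\lambda_r^*\xi}\varphi(\xi)$ with $\varphi$ a ``truncated cosine-like bump'' coming from complex roots of $c_1\lambda=D(\lambda)$ for $c_1<c_r^*$. You yourself flag that this fails for nonlocal operators because the convolution reaches outside the support; but you do not supply the replacement, you only say ``this is the new type of lower solution the paper introduces.'' The paper does \emph{not} use complex roots or oscillating profiles at all. Instead it perturbs the linearisation: for small $\eta>0$ it replaces $h'(0),g'(0)$ by $h_1=h'(0)-\eta,\,g_1=g'(0)-\eta$, which drops the minimal speed to $c_r^*(\eta)<c_r^*$; for $c_1\in(c_r^*(\eta),c_r^*)$ the \emph{perturbed} dispersion relation $G(c_1,\lambda)H(c_1,\lambda)=g_1h_1$ then has two \emph{real} roots $\gamma_1<\zeta_1$ (Lemma~\ref{th4.2}). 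Picking $\rho_1\in(\gamma_1,\zeta_1)$ and small $\delta_1$ with $\rho_1(1\pm\delta_1)\in(\gamma_1,\zeta_1)$, the paper sets
\[
\underline u_1=\max\{0,\,f_1(e^{\rho_1(-x+c_1t+\xi_1)})\},\quad \underline v_1=\kappa_1\underline u_1,\qquad f_1(y)=y-y^{1+\delta_1}-L_1 y^{1-\delta_1},
\]
and shows via Lemma~\ref{th4.3} that this three-exponential hump is a genuine subsolution, with $L_1$ close to $\tfrac14$ making the support arbitrarily narrow and the amplitude arbitrarily small. This algebraic device is the substantive new idea, and it is absent from your proposal.

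\textbf{Unnecessary complications.} The paper does not split into small and large times, nor introduce an auxiliary seeding time $s$: it runs the forward step on $[0,\mu T]$ and the backward step on $[\mu T,T]$ directly from the initial bump $u_0,v_0\ge q_1$ on $[-d,d]$, and the construction already delivers a single $\nu$ valid for every $T>0$. Your detour through strict positivity of $(u,v)$ for $t>0$ and uniform bounds on compact time intervals is not needed and would still leave you to manufacture the uniform-in-$(s,T,\bar c)$ amplitude control you list as a second difficulty; in the paper that uniformity is automatic because $L_1,L_2,\kappa_1,\kappa_2$ are fixed once $\epsilon$ is fixed.
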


Before giving its proof, we show some other results derived from Theorem \ref{th2.3}.
We see that the spreading speeds of system \eqref{1.1} for this type of initial values are $c_l^*$ and $c_r^*$ whose signs are determined by $k_1(\cdot)$ and $k_2(\cdot)$ as stated in Section 2. Therefore, the asymmetric dispersals in system \eqref{1.1} can influence the  propagating directions of solutions and   the stability  property of steady states. More precisely, the spatial region  $\Omega(t)$ defined by \eqref{01.4} propagates to both the left and the right of the $x$-axis for $c_l^*<0<c_r^*$; propagates only to the right for $0<c_l^*<c_r^*$; and propagates only to the left for $c_l^*<c_r^*<0$.
However, if the set $\Omega(t)$ is connected at time $t>0$, in  case of $0=c_l^*<c_r^*$, the movement of the left boundary of $\Omega(t)$ is slower than linearity and we cannot identify its propagating direction. Similarly, we cannot  identify the propagating direction of  the right boundary of $\Omega(t)$ in case of $c_l^*<c_r^*=0$ either. On the other hand,  for this type of initial data, when $c_l^*<0<c_r^*$, the steady state $(u(t,x),v(t,x))\equiv(1,1)$ is stable; namely  $(u(t,x),v(t,x))\to (1,1)$ as $t\to +\infty$, but when $c_l^*<c_r^*<0$ or $0<c_l^*<c_r^*$, we see that $(u(t,x),v(t,x))\to (0,0)$ as $t\to +\infty$ in any bounded spatial region.

From Theorem \ref{th2.3} we also obtain the following spatial propagation phenomenon: any small positive perturbation of the steady state $(u(t,x),v(t,x))\equiv (0,0)$ at some spatial location $x_0\in\mathbb R$ and time $t=0$ (namely $(u(0,x_0),v(0,x_0))>(0,0)$ holds) will spread in the  spatial region
\begin{equation}\label{02.4}
\Omega(t,\epsilon,x_0)\triangleq\{x\in\mathbb R~|~(c_l^*+\epsilon)t \leqslant x-x_0\leqslant(c_r^*-\epsilon)t\}~~\text{for any}~t>0~\text{and small}~\epsilon>0,
\end{equation}
which means that $(u(t,x),v(t,x))>(\mu,\mu)$ for $x\in\Omega(t,\epsilon,x_0)$ and some constant $\mu>0$.
From this result, we can get some nonexistence results of traveling wave solutions of the following system
\begin{equation}\label{02.5}
\left\{\begin{aligned}
&u_t(t,x)=k_1*u(t,x)-u(t,x)-\alpha u(t,x)+h(v(t,x)), ~~t\in\mathbb R,~x\in\mathbb R,\\
&v_t(t,x)=k_2*v(t,x)-v(t,x)-\beta v(t,x)+g(u(t,x)), ~~~t\in\mathbb R,~x\in\mathbb R.
\end{aligned}
\right.
\end{equation}

\begin{corollary}\label{th2.4}
Assume that {\rm(H1)}, {\rm(H2)}, {\rm(K1)} and {\rm{(K2)}} hold. Suppose that $(u(t,x),v(t,x))=(\phi(x-ct),\psi(x-ct))$ is a traveling wave solution of system \eqref{02.5} and  satisfies $(\phi,\psi)\not\equiv(0,0)$.  {\rm(i)} If $(\phi(+\infty),\psi(+\infty))=(0,0)$, then $c\geqslant c_r^*${\rm;} {\rm(ii)} if $(\phi(-\infty),\psi(-\infty))=(0,0)$, then $c\leqslant c_l^*$.
\end{corollary}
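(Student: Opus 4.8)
The plan is to read off Corollary~\ref{th2.4} from the spreading lower bound in Theorem~\ref{th2.3}, using that a traveling wave is invariant under the shift $(t,x)\mapsto(t,x+ct)$; I describe part~(i), part~(ii) being the mirror image. First I would note that a bounded nonnegative wave profile with $(\phi,\psi)\not\equiv(0,0)$ is automatically \emph{strictly} positive: from (H1)--(H2), $\phi\equiv0$ would force $h(\psi)\equiv0$ and hence $\psi\equiv0$ (and symmetrically), so $\phi\not\equiv0$ and $\psi\not\equiv0$; the strong maximum principle for the nonlocal cooperative system~\eqref{02.5} (available thanks to (K2) and $g'(0)h'(0)>0$) then yields $\phi>0$ and $\psi>0$ on all of $\mathbb R$. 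In particular I may fix $x_0=0$ with $\phi(0)>0$ and $\psi(0)>0$. Restricted to $t\ge0$, the wave $(u,v)(t,x)=(\phi(x-ct),\psi(x-ct))$ is, by uniqueness for \eqref{1.1}, the solution with initial data $(u_0,v_0)=(\phi,\psi)$.

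Since the profile need not decay as $x\to-\infty$ (in case~(i) one typically has $\phi(-\infty)=\psi(-\infty)=1$), it need not meet the two-sided exponential bound required in Theorem~\ref{th2.3}, so I would pass to a compactly supported minorant: choose continuous, compactly supported $(\underline u_0,\underline v_0)$ with $(0,0)\le(\underline u_0,\underline v_0)\le(\phi,\psi)$ and $\underline u_0(0)>0$, $\underline v_0(0)>0$. Such data satisfy every hypothesis of Theorem~\ref{th2.3}, so for each small $\epsilon>0$ there is $\nu\in(0,1)$ with $\underline u(t,x)\ge\nu$ whenever $t>0$ and $(c_l^*+\epsilon)t\le x\le(c_r^*-\epsilon)t$ — this is the spreading in the region $\Omega(t,\epsilon,0)$ of \eqref{02.4}. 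By the comparison principle (Lemma~\ref{th4.1}), $\underline u(t,x)\le u(t,x)=\phi(x-ct)$ for all $t\ge0$, $x\in\mathbb R$.

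To finish~(i), suppose for contradiction $c<c_r^*$. Because $c_l^*<c_r^*$ (Theorem~\ref{th2.1}), I can choose $\epsilon>0$ small enough that Theorem~\ref{th2.3} applies, that $c_l^*+\epsilon\le c_r^*-\epsilon$, and that $c<c_r^*-\epsilon$. For every $t>0$ the point $x=(c_r^*-\epsilon)t$ then lies in the admissible range, so
\[
\phi\big((c_r^*-\epsilon-c)t\big)=\phi(x-ct)=u(t,x)\ge\underline u(t,x)\ge\nu>0 .
\]
Since $c_r^*-\epsilon-c>0$, letting $t\to+\infty$ gives $\phi(y)\ge\nu$ for all large $y$, contradicting $\phi(+\infty)=0$; hence $c\ge c_r^*$. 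Part~(ii) runs the same way with the ends swapped: assuming $c>c_l^*$, pick $\epsilon$ with $c>c_l^*+\epsilon$ and take $x=(c_l^*+\epsilon)t$, obtaining $\phi\big((c_l^*+\epsilon-c)t\big)\ge\nu$ for all $t>0$ while $(c_l^*+\epsilon-c)t\to-\infty$, which contradicts $\phi(-\infty)=0$; hence $c\le c_l^*$.

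All arithmetic here is routine; the one step that really needs care is replacing the wave profile — which decays on only one side — by a compactly supported sub-solution, so that the spreading lower bound of Theorem~\ref{th2.3} becomes applicable, after which the statement is just that estimate combined with the translation structure of the wave. A minor bookkeeping point is that the three conditions imposed on $\epsilon$ in the last step are simultaneously satisfiable precisely because $c_l^*<c_r^*$.
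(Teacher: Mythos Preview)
Your argument is correct and follows essentially the same route as the paper: choose initial data below the wave profile that satisfy the hypotheses of Theorem~\ref{th2.3}, apply the spreading lower bound together with the comparison principle (Lemma~\ref{th4.1}), and read off a contradiction from the boundary behaviour of the profile. The paper picks an intermediate speed $c_0\in(\max\{c,c_l^*+\epsilon\},\,c_r^*-\epsilon)$ and evaluates along $x=x_0+c_0t$, whereas you evaluate at the right edge $x=(c_r^*-\epsilon)t$; these are equivalent. Your extra paragraph justifying that $\phi$ and $\psi$ are both strictly positive (so that a nontrivial compactly supported minorant exists) is a point the paper passes over in silence, so your write-up is in fact slightly more careful there.
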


\begin{proof}
Let  the initial data  $(u_0(x),v_0(x))$ in system \eqref{1.1} satisfy
\[
(u_0(x),v_0(x))\leqslant(\phi(x),\psi(x))~\text{for}~x\in\mathbb R,~~ (u_0(x_0),v_0(x_0))\gg(0,0)~~\text{for some}~x_0\in\mathbb R.
\]
Then  Theorem \ref{th2.3} and the comparison principle (Lemma \ref{th4.1})  show that for any constant $\epsilon>0$ small enough,
\[
(\phi(x-ct),\psi(x-ct))\geqslant(u(t,x),v(t,x))\geqslant(\nu,\nu)~~\text{for}~t>0,~x\in \Omega(t,\epsilon,x_0),
\]
where $(u(t,x),v(t,x))$ is a solution of system \eqref{1.1} and $\Omega(t,\epsilon,x_0)$ is defined by \eqref{02.4}.

In case (i), we suppose  $c<c_r^*$.
Let $\epsilon$ be small enough such that $0<\epsilon<c_r^*-c$.
By taking a constant $c_0\in\mathbb R$ satisfying $\max\{c,c_l^*+\epsilon\}<c_0<c_r^*-\epsilon$, we get that $x_0+c_0t\in \Omega(t,\epsilon,x_0)$ and
\[
\big(\phi(x_0+c_0t-ct),\psi(x_0+c_0t-ct)\big)\geqslant(\nu,\nu)~~\text{for}~t>0.
\]
It is a contradiction to $(\phi(+\infty),\psi(+\infty))=(0,0)$.
Similarly, we can prove case (ii).
\end{proof}

\begin{remark}
\rm
Corollary \ref{th2.4} shows that there exists no traveling wave solution $(u(t,x),v(t,x))=(\phi(x-ct),\psi(x-ct))$ of system \eqref{02.5} satisfying  $(\phi(+\infty),\psi(+\infty))=(0,0)$ and  $c\in(-\infty,c_r^*)$. Meanwhile, system \eqref{02.5} has no traveling wave solution  satisfying  $(\phi(-\infty),\psi(-\infty))=(0,0)$ and  $c\in(c_l^*,+\infty)$ either.
\end{remark}

Now we focus on the proof of Theorem \ref{th2.3} in the following  three subsections.

\subsection{Preliminaries}
\noindent

The basic tools in the proof of Theorem \ref{th2.3} are the upper and lower solutions method and the following comparison principle of system \eqref{1.1} whose proof can be found in \cite{LXZ2017}.

\begin{lemma}\label{th4.1}  {\rm(Comparison Principle)}
Assume that {\rm(H1)}, {\rm(H2)} and {\rm(K1)} hold. For any $\tau >0$, if the continuous functions $(u_1(t,x), v_1(t,x))$ and $(u_2(t,x), v_2(t,x))$ satisfy
\[
\left\{\begin{aligned}
&\partial_t u_1-k_1*u_1+u_1+\alpha u_1-h(v_1)\geqslant\partial_t u_2-k_1*u_2+u_2+\alpha u_2-h(v_2),\\
&\partial_t v_1-k_2*v_1+v_1+\beta v_1-g(u_1)\geqslant\partial_t v_2-k_2*v_2+v_2+\beta v_2-g(u_2),\\
&u_1(0,x)\geqslant u_2(0,x), ~v_1(0,x)\geqslant v_2(0,x)
\end{aligned}
\right.
\]
for $t\in(0, \tau]$, $x\in\mathbb{R}$, then $(u_1(t,x),v_1(t,x))\geqslant (u_2(t,x),v_2(t,x))$ for   $t\in[0, \tau]$ and $x\in\mathbb{R}$.
\end{lemma}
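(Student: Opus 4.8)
The plan is to reduce the statement to a positivity property for a linear \emph{cooperative} nonlocal system and then run a first-crossing-time argument, in which the role played in the local (Laplacian) case by ``the diffusion term is nonnegative at an interior minimum'' is taken over by the elementary observation that $k_i*W(t,x)\geq\inf_{y\in\mathbb R}W(t,y)$ holds at \emph{every} point $x$. First I would set $w=u_1-u_2$, $z=v_1-v_2$ and subtract the two differential inequalities. Using the $C^1$ regularity of $g$ and $h$ to write $g(u_1)-g(u_2)=a(t,x)\,w$ and $h(v_1)-h(v_2)=b(t,x)\,z$ with
\[
a(t,x)=\int_0^1 g'\!\big(u_2+s(u_1-u_2)\big)\,ds,\qquad b(t,x)=\int_0^1 h'\!\big(v_2+s(v_1-v_2)\big)\,ds,
\]
this yields
\[
\begin{aligned}
&\partial_t w\ \geq\ k_1*w-(1+\alpha)w+b\,z,\\
&\partial_t z\ \geq\ k_2*z-(1+\beta)z+a\,w,
\end{aligned}
\]
together with $w(0,\cdot)\geq0$, $z(0,\cdot)\geq0$. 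Since the arguments of $g$ and $h$ lie in $[0,1]$, assumption (H2) forces $a,b\geq0$ — this nonnegativity of the coupling is precisely the cooperativity that makes the comparison principle true — and $a,b$ are bounded by $L:=\max\{\|g'\|_{C[0,1]},\|h'\|_{C[0,1]}\}$. The goal then becomes $w\geq0$ and $z\geq0$ on $[0,\tau]\times\mathbb R$.

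Next I would build in a strict margin: fix $\delta>0$, choose $\gamma\geq L$, and set $W=w+\delta e^{\gamma t}$, $Z=z+\delta e^{\gamma t}$. Since $k_i*(\delta e^{\gamma t})=\delta e^{\gamma t}$, substitution turns the inequalities above into the strict ones
\[
\begin{aligned}
&\partial_t W\ >\ k_1*W-(1+\alpha)W+b\,Z,\\
&\partial_t Z\ >\ k_2*Z-(1+\beta)Z+a\,W,
\end{aligned}
\]
with surplus terms $\delta e^{\gamma t}(\alpha+\gamma-b)>0$ and $\delta e^{\gamma t}(\beta+\gamma-a)>0$, and with $W(0,\cdot),Z(0,\cdot)\geq\delta$. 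It suffices to prove $W>0$ and $Z>0$ on $[0,\tau]\times\mathbb R$ and then let $\delta\downarrow0$.

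For the positivity, let $S$ be the set of $t\in[0,\tau]$ such that $W(s,\cdot)\geq0$ and $Z(s,\cdot)\geq0$ for all $s\in[0,t]$; it contains $0$ and is a subinterval of $[0,\tau]$. For $t\in S$ and fixed $x$, on $[0,t]$ one has $k_1*W\geq0$ and $b\,Z\geq0$, hence $\partial_s W(s,x)>-(1+\alpha)W(s,x)$, and integrating gives $W(t,x)\geq e^{-(1+\alpha)t}W(0,x)\geq e^{-(1+\alpha)\tau}\delta$; symmetrically $Z(t,x)\geq e^{-(1+\beta)\tau}\delta$. Thus on $S$ both $W$ and $Z$ exceed the fixed constant $m_\star:=e^{-(1+\max\{\alpha,\beta\})\tau}\delta>0$. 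It remains to see $S=[0,\tau]$. If not, put $t_0:=\sup S<\tau$; since the right-hand sides of the inequalities are bounded below (here one uses that $w,z$ are bounded, as they always are in the applications of the lemma), $\partial_t W\geq-C$ and $\partial_t Z\geq-C$ for some constant $C$, so $W(s,x)\geq W(t_0,x)-C(s-t_0)\geq m_\star-C(s-t_0)$ and likewise for $Z$ (with $W(t_0,\cdot),Z(t_0,\cdot)\geq m_\star$ obtained, if $t_0\notin S$, by letting $s\uparrow t_0$). Hence $W,Z\geq0$ on $[0,t_0+\eta]$ for $\eta<m_\star/C$, contradicting the maximality of $t_0$. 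Therefore $W,Z>0$ on $[0,\tau]\times\mathbb R$, and letting $\delta\downarrow0$ gives $w\geq0$ and $z\geq0$, i.e.\ $(u_1,v_1)\geq(u_2,v_2)$.

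I expect the one point genuinely requiring care to be making the ``evaluate at an interior minimum'' heuristic rigorous on the unbounded line $\mathbb R$, where a minimum of $W(t,\cdot)$ need not be attained: the remedy above is to work with $\inf_y W(t,y)$ directly and to exploit that the nonlocal term satisfies $k_i*W(t,x)\geq\inf_y W(t,y)$ \emph{pointwise in $x$}, combined with a uniform-in-$x$ lower bound on $\partial_t W$. The only structural hypothesis that must be used honestly is cooperativity — the sign conditions in (H2) together with the a priori bounds $0\le u_i,v_i\le1$ — since without $a,b\geq0$ the coupling terms $b\,Z$, $a\,W$ need not be nonnegative and the scheme collapses. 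Everything else is routine bookkeeping, and the same argument carries over to any cooperative system of this type, in particular to \eqref{01.5}.
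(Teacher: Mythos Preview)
The paper does not actually prove this lemma: immediately after the statement it writes ``whose proof can be found in \cite{LXZ2017}'' and moves on. So there is no in-paper argument to compare against.

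Your proof is a correct and standard one for comparison principles in cooperative nonlocal systems: linearize the difference via the integral mean-value formula, use (H2) to ensure the off-diagonal coefficients $a,b$ are nonnegative, perturb by $\delta e^{\gamma t}$ to create a strict margin, and run a first-hitting-time continuation argument exploiting $k_i*W\geq\inf_y W$. The one tacit hypothesis you invoke --- that the $u_i,v_i$ take values in $[0,1]$ so that $g',h'$ are defined and nonnegative and so that $w,z$ are a priori bounded --- is not written into the lemma's statement but is forced by the fact that $g,h\in C^1([0,1])$ in this paper, and you flag it honestly. With that understood, the argument is complete.
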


Next we define some notations.  For $c\in\mathbb R$ and $\lambda\in\mathbb R$,   denote
\begin{equation}\label{04.1}
G(c,\lambda)\triangleq c\lambda-A(\lambda)=c\lambda-\int_\mathbb R k_1(x)e^{\lambda x}dx+1+\alpha,
\end{equation}
\begin{equation}\label{04.2}
H(c,\lambda)\triangleq c\lambda-B(\lambda)=c\lambda-\int_\mathbb R k_2(x)e^{\lambda x}dx+1+\beta.
\end{equation}
From \eqref{02.1}, we get that  for $\lambda\neq0$,
\begin{equation}\label{04.3}
G(c(\lambda),\lambda)=D(\lambda)-A(\lambda)>0,~~H(c(\lambda),\lambda)=D(\lambda)-B(\lambda)>0.
\end{equation}
It follows that  for $\lambda\neq0$,
\begin{equation}\label{04.4}
G(c(\lambda),\lambda)H(c(\lambda),\lambda)=(D(\lambda)-A(\lambda))(D(\lambda)-B(\lambda))=g'(0)h'(0).
\end{equation}
Denote the function
\begin{equation}\label{04.5}
b(\lambda)\triangleq\frac{1}{2h'(0)}\Big[-A(\lambda)+B(\lambda)+\sqrt{(A(\lambda)-B(\lambda)^2+4h'(0)g'(0)}\Big]>0~~\text{for}~\lambda\in\mathbb R.
\end{equation}
When $k_1$ and $k_2$ are symmetric, we have that $b(\lambda)=b(-\lambda)$.
Then we get  from \eqref{02.1} that
\begin{equation}\label{04.6}
b(\lambda)=\frac{G(c(\lambda),\lambda)}{h'(0)}=\frac{g'(0)}{H(c(\lambda),\lambda)}~~\text{for}~\lambda\neq0.
\end{equation}

In the construction of   new   lower solutions, we also need to introduce some new notations.
For any   $\eta\in\big(0,\min\{g'(0),h'(0)\}\big)$, we  define a function
\begin{equation}\label{04.7}
c_\eta(\lambda)=\frac{1}{\lambda}D_\eta(\lambda)~~\text{for}~\lambda\neq0,
\end{equation}
where
\[
D_\eta(\lambda)=\frac{1}{2}\left[A(\lambda)+B(\lambda)+\sqrt{(A(\lambda)-B(\lambda))^2+4(g'(0)-\eta) (h'(0)-\eta)}\right].
\]
Similarly to \eqref{04.4}, we have
\begin{equation}\label{04.8}
G(c_\eta(\lambda),\lambda)H(c_\eta(\lambda),\lambda)=(g'(0)-\eta)(h'(0)-\eta)~~~\text{for}~~\lambda\neq0.
\end{equation}
By  the same  method used in the proof of Theorem \ref{th2.1}, for any   $\eta\in\left(0,\min\{g'(0),h'(0)\}\right)$, we can define
\begin{equation}\label{04.9}
c_r^*(\eta)\triangleq \inf\limits_{\lambda\in\mathbb R^+}\{c_\eta(\lambda)\}~~\text{and}~~c_l^*(\eta)\triangleq \sup\limits_{\lambda\in\mathbb R^-}\{c_\eta(\lambda)\}.
\end{equation}
It follows that $c_l^*<c_l^*(\eta)<c_r^*(\eta)<c_r^*$. Moreover, we have that  $c_r^*(\eta)\rightarrow c_r^*$ and $c_l^*(\eta)\rightarrow c_l^*$ as $\eta\rightarrow0$. Then for any $\epsilon>0$ small enough, there are  two small constants $\eta_1, \eta_2\in\left(0,\min\{g'(0),~h'(0)\}\right)$ such that
$
c_r^*(\eta_1)=c_r^*-\epsilon,~~c_l^*(\eta_2)=c_l^*+\epsilon
$
and
\[
\alpha\beta<(h'(0)-\eta_1)(g'(0)-\eta_1),~~\alpha\beta<(h'(0)-\eta_2)(g'(0)-\eta_2).
\]
For short,  we denote
\[
g_1\triangleq g'(0)-\eta_1,~~h_1\triangleq h'(0)-\eta_1,~~g_2\triangleq g'(0)-\eta_2,~~h_2\triangleq h'(0)-\eta_2.
\]

The following lemma gives some properties  of functions $G(c,\lambda)$ and $H(c,\lambda)$.

\begin{lemma}\label{th4.2}
For any  $c_1\in(c_r^*-\epsilon,c_r^*)$ with  $\epsilon>0$ small enough, there are  two unique constants $\zeta_1(c_1)>\gamma_1(c_1)>0$ {\rm(}denoted  also by $\zeta_1$ and $\gamma_1$ for short{\rm)} such that
\[
G(c_1,\gamma_1)H(c_1,\gamma_1)=G(c_1,\zeta_1)H(c_1,\zeta_1)=g_1 h_1
\]
and
\[
G(c_1,\rho)H(c_1,\rho)>g_1h_1,~~G(c_1,\rho)>0,~~H(c_1,\rho)>0~~ \text{for all}~\rho\in(\gamma_1,\zeta_1).
\]
Similarly, for any  $c_2\in(c_l^*,c_l^*+\epsilon)$ with  $\epsilon>0$ small enough, there are  two unique  constants $\zeta_2(c_2)<\gamma_2(c_2)<0$ {\rm(}denoted also by $\zeta_2$ and $\gamma_2$ for short{\rm)} such that
\[
G(c_2,\gamma_2)H(c_2,\gamma_2)=G(c_2,\zeta_2)H(c_2,\zeta_2)=g_2h_2
\]
and
\[
G(c_2,\rho)H(c_2,\rho)>g_2h_2,~~G(c_2,\rho)>0,~~H(c_2,\rho)>0~~\text{for all}~\rho\in(\zeta_2,\gamma_2).
\]
\end{lemma}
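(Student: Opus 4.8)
The plan is to reduce the defining relation $G(c_1,\lambda)H(c_1,\lambda)=g_1h_1$, together with the sign constraints $G(c_1,\lambda)>0$, $H(c_1,\lambda)>0$, to the single scalar equation $c_1=c_{\eta_1}(\lambda)$, and then to read off both the existence and uniqueness of $\gamma_1<\zeta_1$ and the strict inequality on $(\gamma_1,\zeta_1)$ from the shape of $c_{\eta_1}$ on $\mathbb R^+$, which is governed by exactly the same analysis as in the proof of Theorem \ref{th2.1}.

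First I would fix $c_1$ and $\lambda>0$ and view $G(c_1,\lambda)H(c_1,\lambda)=g_1h_1$ as a quadratic in $t:=c_1\lambda$, namely $t^2-(A(\lambda)+B(\lambda))t+A(\lambda)B(\lambda)-g_1h_1=0$, with roots $t_\pm=\tfrac12\big[A(\lambda)+B(\lambda)\pm\sqrt{(A(\lambda)-B(\lambda))^2+4g_1h_1}\,\big]$. Since $g_1h_1>0$ one has $\sqrt{(A-B)^2+4g_1h_1}>|A-B|$, so $t_-<\min\{A(\lambda),B(\lambda)\}$ while $t_+>\max\{A(\lambda),B(\lambda)\}$; hence the requirement $G(c_1,\lambda)=t-A(\lambda)>0$ and $H(c_1,\lambda)=t-B(\lambda)>0$ forces $t=t_+=D_{\eta_1}(\lambda)$, i.e. $c_1\lambda=D_{\eta_1}(\lambda)$, i.e. $c_1=c_{\eta_1}(\lambda)$ by \eqref{04.7}. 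Conversely, if $c_1=c_{\eta_1}(\lambda)$ with $\lambda>0$, then $c_1\lambda=D_{\eta_1}(\lambda)>A(\lambda),B(\lambda)$, so $G,H>0$ hold automatically and $G(c_1,\lambda)H(c_1,\lambda)=g_1h_1$ by \eqref{04.8}. Thus, on $\mathbb R^+$, the admissible solutions of the defining equation are precisely the roots of $c_{\eta_1}(\lambda)=c_1$.

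Next I would invoke the counterpart of Theorem \ref{th2.1} for $c_{\eta_1}$, valid verbatim since $g_1,h_1>0$: $c_{\eta_1}(\lambda)\to+\infty$ as $\lambda\to0^+$ and as $\lambda\to+\infty$, $c_{\eta_1}$ is strictly decreasing on $(0,\lambda_r^*(\eta_1))$ and strictly increasing on $(\lambda_r^*(\eta_1),+\infty)$, with minimum $c_r^*(\eta_1)=c_r^*-\epsilon$. Since $c_1\in(c_r^*-\epsilon,c_r^*)$ satisfies $c_1>c_r^*(\eta_1)$, the level $c_1$ is attained by the graph of $c_{\eta_1}$ at exactly two points $\gamma_1<\lambda_r^*(\eta_1)<\zeta_1$ in $\mathbb R^+$, unique by strict monotonicity on each branch; this yields the first two displayed identities. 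For $\lambda\in(\gamma_1,\zeta_1)$ one has $c_{\eta_1}(\lambda)<c_1$, hence (multiplying by $\lambda>0$) $c_1\lambda>D_{\eta_1}(\lambda)$, so $G(c_1,\lambda)>D_{\eta_1}(\lambda)-A(\lambda)>0$ and $H(c_1,\lambda)>D_{\eta_1}(\lambda)-B(\lambda)>0$, and multiplying these and using \eqref{04.8} gives $G(c_1,\lambda)H(c_1,\lambda)>g_1h_1$, the remaining assertion. The statement for $c_2\in(c_l^*,c_l^*+\epsilon)$ is the mirror image: on $\mathbb R^-$ the admissible solutions are the roots of $c_{\eta_2}(\lambda)=c_2$, and since $c_{\eta_2}$ is strictly increasing on $(-\infty,\lambda_l^*(\eta_2))$ and strictly decreasing on $(\lambda_l^*(\eta_2),0)$ with maximum $c_l^*(\eta_2)=c_l^*+\epsilon>c_2$, one obtains exactly two roots $\zeta_2<\lambda_l^*(\eta_2)<\gamma_2<0$, and for $\lambda\in(\zeta_2,\gamma_2)$ the inequality $c_{\eta_2}(\lambda)>c_2$ multiplied by $\lambda<0$ again gives $c_2\lambda>D_{\eta_2}(\lambda)$ and hence $G(c_2,\lambda)H(c_2,\lambda)>g_2h_2$ with $G,H>0$. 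I expect the only point requiring care is the root-selection step combined with keeping the inequalities straight in the $\mathbb R^-$ case (dividing by a negative number reverses them); everything else is a transcription of the monotonicity analysis already carried out for $c(\lambda)$ in the proof of Theorem \ref{th2.1}.
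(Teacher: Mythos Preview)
Your proof is correct, but it takes a different route from the paper. The paper works directly with the function $\lambda\mapsto G(c_1,\lambda)H(c_1,\lambda)$ on $\mathbb R^+$: it evaluates this product at three reference points, namely $\lambda=0$ (where $GH=\alpha\beta<g_1h_1$), $\lambda=\lambda_r^*(\eta_1)$ (where the inequalities $c_1>c_{\eta_1}(\lambda_r^*(\eta_1))$ and $\partial_cG=\partial_cH=\lambda>0$ give $GH>g_1h_1$), and $\lambda=\lambda_1$ (the first positive zero of either $G$ or $H$, where $GH=0<g_1h_1$); the intermediate value theorem then produces $\gamma_1\in(0,\lambda_r^*(\eta_1))$ and $\zeta_1\in(\lambda_r^*(\eta_1),\lambda_1)$. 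Uniqueness and the strict inequality on $(\gamma_1,\zeta_1)$ are obtained by a second-derivative argument: on the interval where $G,H>0$, whenever $(GH)'=0$ one has $(GH)''<0$, so the product has a unique interior maximum and no other critical points.

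Your approach is more conceptual: by recognizing that the admissible equation $G(c_1,\lambda)H(c_1,\lambda)=g_1h_1$ with $G,H>0$ is, via the quadratic root selection, exactly $c_1=c_{\eta_1}(\lambda)$, you transfer the whole problem to the shape of $c_{\eta_1}$ on $\mathbb R^+$, which is already known from (the $\eta$-version of) Theorem~\ref{th2.1}. This yields existence, uniqueness, and the strict inequality in one stroke, and it makes transparent why $\gamma_1,\zeta_1$ straddle $\lambda_r^*(\eta_1)$. The paper's approach avoids the algebraic root-selection step and is perhaps more self-contained, while yours is shorter and better exposes the link between Lemma~\ref{th4.2} and the spreading-speed function $c_{\eta_1}$ already introduced in \eqref{04.7}--\eqref{04.9}.
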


\begin{proof}
Similarly to the proof of  Theorem \ref{th2.1}, for any constant  $\eta\in(0,\min\{g'(0),h'(0)\})$, there are two unique constants $\lambda_r^*(\eta)\in\mathbb R^+$ and $\lambda_l^*(\eta)\in\mathbb R^-$ such that
\[
c_r^*(\eta)=c_\eta(\lambda_r^*(\eta)),~~c_l^*(\eta)= c_\eta(\lambda_l^*(\eta)),
\]
where $c_\eta(\lambda)$, $c_r^*(\eta)$ and $c_l^*(\eta)$ are defined by \eqref{04.7} and \eqref{04.9}.
Since  $\lambda_r^*(\eta_1)>0$ and
\[
\frac{\partial}{\partial c}G(c,\lambda)=\frac{\partial}{\partial c}H(c,\lambda)=\lambda,~
~c_1>c_r^*-\epsilon=c_r^*(\eta_1)=c_{\eta_1}(\lambda_r^*(\eta_1)),
\]
we get that
\begin{equation}\label{05.17}
G(c_1,\lambda_r^*(\eta_1))>G(c_{\eta_1}(\lambda_r^*(\eta_1)),\lambda_r^*(\eta_1))>0,
\end{equation}
\begin{equation}\label{05.18}
H(c_1,\lambda_r^*(\eta_1))>H(c_{\eta_1}(\lambda_r^*(\eta_1)),\lambda_r^*(\eta_1))>0.
\end{equation}
Then \eqref{04.8} implies
\[
G(c_1,\lambda_r^*(\eta_1))H(c_1,\lambda_r^*(\eta_1))>G(c_{\eta_1}(\lambda_r^*(\eta_1)),\lambda_r^*(\eta_1))H(c_{\eta_1}(\lambda_r^*(\eta_1)),\lambda_r^*(\eta_1))=g_1h_1.
\]

On the other hand, we easily get
\[
G(c_1,0)=\alpha>0,~~~H(c_1,0)=\beta>0,~~~G(c_1,0)H(c_1,0)=\alpha\beta<g_1h_1.
\]
Since
\[
G(c_1,+\infty)<0,~~~H(c_1,+\infty)<0,~~~\frac{\partial^2}{\partial \lambda^2}G(c_1,\lambda)<0,~~~\frac{\partial^2}{\partial \lambda^2}H(c_1,\lambda)<0,
\]
from \eqref{05.17} and \eqref{05.18}, there is  a unique constant $\lambda_1$ in $(\lambda_r^*(\eta_1),+\infty)$ such that
$
G(c_1,\lambda)>0,~~H(c_1,\lambda)>0~~\text{for}~~\lambda\in(0,\lambda_1)
$
and either $G(c_1,\lambda_1)=0$ or $H(c_1,\lambda_1)=0$.
Then it follows that
\[
G(c_1,\lambda_1)H(c_1,\lambda_1)=0<g_1h_1.
\]

By the arguments above, there are two  constants $\gamma_1\in(0,\lambda_r^*(\eta_1))$ and $\zeta_1\in(\lambda_r^*(\eta_1),\lambda_1)$ such that
$
G(c_1,\gamma_1)H(c_1,\gamma_1)=G(c_1,\zeta_1)H(c_1,\zeta_1)=g_1 h_1.
$
Moreover, if the constant $\lambda_0\in(0,\lambda_1)$  satisfies
\[
\frac{\partial}{\partial\lambda}\big(G(c_1,\lambda)H(c_1,\lambda)\big)\Big|_{\lambda=\lambda_0}=G(c_1,\lambda_0)\frac{\partial}{\partial \lambda}H(c_1,\lambda_0)+H(c_1,\lambda_0)\frac{\partial}{\partial \lambda}G(c_1,\lambda_0)=0,
\]
then we can get
\[
\begin{aligned}
&\frac{\partial^2}{\partial\lambda^2}\big(G(c_1,\lambda)H(c_1,\lambda)\big)\Big|_{\lambda=\lambda_0}\\
&\quad =G(c_1,\lambda_0)\frac{\partial^2}{\partial \lambda^2}H(c_1,\lambda_0)+H(c_1,\lambda_0)\frac{\partial^2}{\partial \lambda^2}G(c_1,\lambda_0)+2\frac{\partial}{\partial\lambda}G(c_1,\lambda_0)\frac{\partial}{\partial \lambda}H(c_1,\lambda_0)\\
&\quad < 0.
\end{aligned}
\]
Therefore, we have that $\gamma_1$ and $\zeta_1$ are unique and
\[
G(c_1,\rho)>0,~~H(c_1,\rho)>0,~~ G(c_1,\rho)H(c_1,\rho)>g_1h_1~~~\text{for}~\rho\in(\gamma_1,\zeta_1).
\]
Similarly, we can get the   results about $\zeta_2$ and $\gamma_2$.
\end{proof}

Now we choose some constants $\rho_1\in(\gamma_1,\zeta_1)$, $\rho_2\in(\zeta_2,\gamma_2)$,  $\delta_1>0$ and $\delta_2>0$  such that
\begin{equation}\label{3.99}
\gamma_1<\rho_1(1-\delta_1)<\rho_1(1+\delta_1)<\zeta_1,~~\zeta_2<\rho_2(1+\delta_2)<\rho_2(1-\delta_2)<\gamma_2.
\end{equation}
Then for short, we denote
\[
\begin{aligned}
&G_1^0\triangleq G(c_1,\rho_1), &&G_1^+\triangleq G(c_1,\rho_1(1+\delta_1)), && G_1^-\triangleq G(c_1,\rho_1(1-\delta_1)),\\
&H_1^0\triangleq H(c_1,\rho_1), &&H_1^+\triangleq H(c_1,\rho_1(1+\delta_1)), &&H_1^-\triangleq H(c_1,\rho_1(1-\delta_1)),\\
&\Delta_1^0=G_1^0H_1^0-g_1h_1>0, &&\Delta_1^+=G_1^+H_1^+-g_1h_1>0, &&\Delta_1^-=G_1^-H_1^--g_1h_1>0
\end{aligned}
\]
and
\[
\begin{aligned}
&G_2^0\triangleq G(c_2,\rho_2),&& G_2^+\triangleq G(c_2,\rho_2(1+\delta_2)), && G_2^-\triangleq G(c_2,\rho_2(1-\delta_2)),\\
&H_2^0\triangleq H(c_2,\rho_2), && H_2^+\triangleq H(c_2,\rho_2(1+\delta_2)), && H_2^-\triangleq H(c_2,\rho_2(1-\delta_2)),\\
&\Delta_2^0=G_2^0H_2^0-g_2h_2>0, && \Delta_2^+=G_2^+H_2^+-g_2h_2>0, && \Delta_2^-=G_2^-H_2^--g_2h_2>0.
\end{aligned}
\]
It follows from Lemma \ref{th4.2} that $G_i^0H_i^0>g_ih_i$ for each $i\in\{1,2\}$.  Therefore, we can choose some constant $\kappa_i>0$ such that
\[
\frac{g_i}{H_i^0}<\kappa_i<\frac{G_i^0}{h_i}~~\text{for each}~~i\in\{1,2\}.
\]
Since
\[
G_i^+\rightarrow G_i^0,~H_i^+\rightarrow H_i^0,~G_i^-\rightarrow G_i^0,~H_i^-\rightarrow H_i^0~~\text{as}~\delta_i\rightarrow 0^+,
\]
we can retake $\delta_i$ small enough such that the constant $\kappa_i$ also satisfies
\begin{equation}\label{03.50}
\frac{g_i}{H_i^+}<\kappa_i<\frac{G_i^+}{h_i}~~\text{and}~~\frac{g_i}{H_i^-}<\kappa_i<\frac{G_i^-}{h_i}~~\text{for each}~~i\in\{1,2\}.
\end{equation}

\begin{remark}\rm
All the notations defined in this section with  subscript ``$1$'' will be used to construct the first lower solutions spreading at a speed  of $c_1\in(c_r^*-\epsilon,c_r^*)$; Meanwhile, all the notations with  subscript ``$2$'' will be used to construct the second lower solutions spreading at a speed of $c_2\in(c_l^*,c_l^*+\epsilon)$.
\end{remark}

In addition,  we also define an auxiliary function and give its properties  in the following lemma.

\begin{lemma}\label{th4.3}
Let $M$, $N$ and $L$ be three positive  constants. For any $\delta\in(0,1)$, define
\[
f(y)=My-Ny^{1+\delta}-Ly^{1-\delta}~~\text{for}~y>0.
\]
Then we have the following conclusions
\begin{itemize}
  \item[(i)]  $F^{\rm{max}}>0$ when $M^2>4LN$, and $F^{\rm{max}}=0$ when $M^2\leqslant4LN$,
  \item[(ii)] $F^{\rm{max}}\rightarrow 0^+$ and $S-R\rightarrow0^+$ as $M^2-4LN\rightarrow 0^+$,
\end{itemize}
where
\[
F^{\rm{max}}\triangleq\sup\limits_{y>0}\big\{f(y)\big\} ~~\text{and}~~
(R,S)\triangleq\big\{y>0~|~f(y)>0\big\}~~\text{when}~M^2>4LN.
\]
\end{lemma}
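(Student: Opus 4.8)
The plan is to analyze $f(y)=My-Ny^{1+\delta}-Ly^{1-\delta}$ on $(0,\infty)$ via the substitution $y=s^{1/\delta}$ (or equivalently $z=y^{\delta}$), which turns the three exponents $1,1+\delta,1-\delta$ into an expression involving $s,s\cdot s,s/s^{?}$ — more precisely, writing $f(y)=y\bigl(M-Ny^{\delta}-Ly^{-\delta}\bigr)$ and setting $z=y^{\delta}>0$, the sign of $f(y)$ is governed by the quadratic-type quantity $q(z)\triangleq M-Nz-L/z$, i.e. $zq(z)=-Nz^{2}+Mz-L$. First I would establish part (i): $f(y)>0$ for some $y>0$ if and only if $q(z)>0$ for some $z>0$, and since $-Nz^{2}+Mz-L$ is a downward parabola in $z$ with $-Nz^{2}+Mz-L<0$ at $z=0^{+}$ and as $z\to+\infty$, it attains a positive value somewhere iff its discriminant $M^{2}-4LN$ is positive; when $M^{2}\le 4LN$ it stays $\le 0$, so $f(y)\le 0$ for all $y>0$ and hence $F^{\mathrm{max}}=\sup_{y>0}f(y)=0$ (the supremum being approached as $y\to 0^{+}$, where $f(y)\to 0$ because $1-\delta>0$). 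When $M^{2}>4LN$, $q(z)>0$ on a genuine interval of $z$-values, hence $f(y)>0$ on the corresponding interval $(R,S)$ of $y$-values, so $F^{\mathrm{max}}>0$.

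Next I would prove part (ii), the asymptotics as $M^{2}-4LN\to 0^{+}$. For the interval length $S-R$: the roots of $-Nz^{2}+Mz-L=0$ are $z_{\pm}=\frac{M\pm\sqrt{M^{2}-4LN}}{2N}$, so $z_{+}-z_{-}=\sqrt{M^{2}-4LN}/N\to 0$, and $(R,S)=\bigl(z_{-}^{1/\delta},z_{+}^{1/\delta}\bigr)$; since $z_{\pm}$ both converge to the common limit $M/(2N)>0$ and $z\mapsto z^{1/\delta}$ is continuous and locally Lipschitz away from $0$, we get $S-R\to 0^{+}$. For $F^{\mathrm{max}}\to 0^{+}$: on $(R,S)$ we have $0<f(y)=y\,q(y^{\delta})\le y_{\mathrm{sup}}\cdot\max_{z}q(z)$ where $y$ ranges over a bounded set near $(M/(2N))^{1/\delta}$, so it suffices to bound $\max_{z>0}q(z)=\max_{z>0}\bigl(M-Nz-L/z\bigr)$. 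This maximum is attained at $z_{*}=\sqrt{L/N}$ with value $M-2\sqrt{LN}$, which tends to $0^{+}$ precisely as $M-2\sqrt{LN}\to 0^{+}$, i.e. as $M^{2}-4LN\to 0^{+}$ (for $M,L,N>0$). Multiplying by the bounded factor $y\le S^{?}$ — concretely, $F^{\mathrm{max}}\le \bigl(z_{+}^{1/\delta}\bigr)\bigl(M-2\sqrt{LN}\bigr)$ with $z_{+}^{1/\delta}$ bounded — gives $F^{\mathrm{max}}\to 0^{+}$.

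I expect the only mild subtlety to be bookkeeping the correspondence between the $y$-interval $(R,S)$ and the $z$-interval $(z_{-},z_{+})$ through the monotone change of variables $z=y^{\delta}$, together with confirming that the supremum defining $F^{\mathrm{max}}$ in the case $M^{2}\le 4LN$ is $0$ rather than negative — this is where one uses $0<1-\delta<1+\delta$ so that $f(y)=My-Ny^{1+\delta}-Ly^{1-\delta}\to 0$ as $y\to 0^{+}$, while $f(y)\to -\infty$ as $y\to+\infty$. None of these steps is a real obstacle; the argument is elementary single-variable calculus once the substitution reduces everything to the concave function $q(z)$, so the lemma follows with only routine estimates.
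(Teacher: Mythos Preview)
Your approach is correct and is, if anything, cleaner than the paper's. The key idea in your proof is the factorization
\[
f(y)=y\bigl(M-Ny^{\delta}-Ly^{-\delta}\bigr)=y\,q(z),\qquad z=y^{\delta},
\]
which reduces the sign analysis of $f$ to the downward parabola $zq(z)=-Nz^{2}+Mz-L$; positivity somewhere is then equivalent to $M^{2}>4LN$, the case $M^{2}\le 4LN$ gives $f\le 0$ with $\sup f=0$ via $f(y)\to 0$ as $y\to 0^{+}$, and the endpoints $R,S$ are $z_{\pm}^{1/\delta}$ with $z_{\pm}=(M\pm\sqrt{M^{2}-4LN})/(2N)$. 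For $F^{\max}\to 0$ you bound $f(y)\le S\cdot\max_{z>0}q(z)=S\cdot(M-2\sqrt{LN})$, and with $M,N$ held fixed (which is how the lemma is applied later, with $M=N=1$ and $L\to 1/4^{-}$) both factors behave as needed.

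The paper proceeds differently: it computes $f'(y)$ directly, locates the two critical points $y_{1}<y_{0}$ explicitly, and then---for fixed $M,N$---studies $F(L)\triangleq f(y_{0})$ as a function of $L$, using $F'(L)=-y_{0}^{1-\delta}<0$ and $F(M^{2}/4N)=0$ to get the dichotomy in (i). For (ii) the paper writes down the same explicit formulas for $R,S$ that you obtain (so implicitly uses your substitution at that step). Your route avoids the derivative computation and the auxiliary monotonicity-in-$L$ argument entirely, trading them for the single observation $\max_{z>0}q(z)=M-2\sqrt{LN}$; the paper's route, on the other hand, yields the explicit location $y_{0}$ of the maximizer, though that formula is not actually used elsewhere. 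Both arguments are elementary and complete; yours is shorter.
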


\begin{proof}
Let $y_0$ and $y_1$ denote two constants satisfying
\[
y_0=\bigg[\frac{M+\sqrt{M^2-4LN(1-\delta^2)}}{2(1+\delta)N}\bigg]^{\frac{1}{\delta}},~~~y_1=\bigg[\frac{M-\sqrt{M^2-4LN(1-\delta^2)}}{2(1+\delta)N}\bigg]^{\frac{1}{\delta}}.
\]
Then we have that
\[
f'(y)
\left\{
\begin{aligned}
&<0~~~~\text{for}~y\in(0,y_1)\cup(y_0,+\infty),\\
&=0~~~~\text{for}~y=y_0~\text{and}~y=y_1,\\
&>0~~~~\text{for}~y\in(y_1,y_0)
\end{aligned}
\right.
\]
and
\[
F^{\rm{max}}\triangleq\sup\limits_{y>0}\big\{f(y)\big\}=\max\{0,~f(y_0)\}.
\]

For the fixed positive constants $M$ and $N$, we define a function
\[
F(L)\triangleq f(y_0)=My_0-Ny_0^{1+\delta}-Ly_0^{1-\delta}~~\text{for}~L>0.
\]
From some simple calculations, we get
\[
F'(L)=f'(y_0)\frac{\partial y_0}{\partial L}-y_0^{1-\delta}=-y_0^{1-\delta}<0.
\]
Notice $F(L)=0$ when $L=\frac{M^2}{4N}$. Then it follows that
\[
F(L)>0~\text{when}~L<\frac{M^2}{4N},~~
F(L)<0~\text{when}~L>\frac{M^2}{4N}.
\]
Therefore, we prove that
\[
F^{\rm{max}}>0~\text{when}~M^2>4LN,~\text{and}~F^{\rm{max}}=0~\text{when}~ M^2\leqslant4LN
\]
and
$
F^{\rm{max}}\rightarrow 0^+~~\text{as}~~M^2-4LN\rightarrow 0^+.
$
Since $(R,S)\triangleq\big\{y>0~|~f(y)>0\big\}$ when $M^2>4LN$, some simple calculations imply that
\[
R=\bigg[\frac{M-\sqrt{M^2-4LN}}{2 N}\bigg]^{\frac{1}{\delta}},~~ S=\bigg[\frac{M+\sqrt{M^2-4LN}}{2 N}\bigg]^{\frac{1}{\delta}}.
\]
Then it follows that
$
S-R\rightarrow0^+~~\text{as}~~M^2-4LN\rightarrow 0^+.
$
This completes the proof.
\end{proof}

\subsection{Lower bounds of spatial propagation}
\noindent

In this part, we  prove the lower bounds of the spatial propagation  in Theorem \ref{th2.3}.
First, we give a new method to construct lower solutions. Let $P$ denote some positive  constant satisfying that for each $i\in\{1,2\}$,
\begin{equation}\label{04.10}
P>\max\bigg\{\Big(\frac{1}{\kappa_i}\Big)^{1+\delta_i}\Big[\frac{2(G_i^0-h_i\kappa_i)^2}{G_i^--h_i\kappa_i}-(G_i^+-h_i\kappa_i)\Big],
~\frac{2 (H_i^0\kappa_i-g_i)^2}{H_i^-\kappa_i-g_i}-(H_i^+\kappa_i-g_i)\bigg\},
\end{equation}
where $g_i= g'(0)-\eta_i$, $h_i=h'(0)-\eta_i$ and $\kappa_i$ satisfies \eqref{03.50}.
Since $g$ and $h$ are in the function space $C^1[0,1]$, there is  some constant $q_0\in(0,1)$ such that for each $i\in\{1,2\}$,
\[
g(u)\geqslant (g'(0)-\frac{\eta_i}{2})u~~\text{for}~u\in(0,q_0),~~h(v)\geqslant (h'(0)-\frac{\eta_i}{2})v~~\text{for}~v\in(0,q_0).
\]
By taking $q_0$ smaller such that $q_0\leqslant\min\big\{\big(\frac{\eta_1}{2P})^{-\delta_1},~\big(\frac{\eta_2}{2P})^{-\delta_2}\big\}$, we can get
\begin{equation}\label{04.11}
g(u)\geqslant g_iu+Pu^{1+\delta_i}~~\text{for}~u\in(0,q_0),~~h(v)\geqslant h_iv+Pv^{1+\delta_i}~~\text{for}~v\in(0,q_0).
\end{equation}

Define two sets of lower solutions as follows
\begin{equation}\label{03.51}
\left\{
\begin{aligned}
&\underline u_i(t,x;\xi_i)=\max\big\{0,~f_i(e^{\rho_i(-x+c_it+\xi_i)})\big\},\\
&\underline v_i(t,x;\xi_i)=\max\big\{0,~\kappa_if_i(e^{\rho_i(-x+c_it+\xi_i)})\big\}
\end{aligned}
\right.~~\text{for each}~i\in\{1,2\},
\end{equation}
where
$
f_i(y)=y-y^{1+\delta_i}-L_iy^{1-\delta_i} ~~\text{for}~y\in\mathbb R^+,
$
and $\rho_i$, $\delta_i$ are two constants satisfying  \eqref{3.99}.
Here $L_i$ is some constant in $[\frac{1}{8},\frac{1}{4})$ and $\xi_i\in\mathbb R$ is a parameter number, and both will be chosen later. Moreover, we define
\[
\begin{aligned}
R_i=\Big[\frac{1-\sqrt{1-4L_i}}{2}\Big]^{\frac{1}{\delta_i}},~
S_i=\Big[\frac{1+\sqrt{1-4L_i}}{2}\Big]^{\frac{1}{\delta_i}},~
Y_i= \bigg[\frac{1+\sqrt{1-4L_i(1-\delta_i^2)}}{2(1+\delta_i)}\bigg]^{\frac{1}{\delta_i}}.
\end{aligned}
\]
Then Lemma \ref{th4.3} shows that
\[
\big(R_i,S_i\big)=\big\{y>0~|~f_i(y)>0\big\},~~Y_i\in\big(R_i,S_i\big),~~
F_i^{\text{max}}\triangleq \sup\limits_{y>0}\{f_i(y)\}=f_i(Y_i)>0.
\]
Also from Lemma \ref{th4.3}, we can take $L_i$  close  enough to $\frac{1}{4}$ such that
\[
\max\{F_i^{\text{max}},\kappa_iF_i^{\text{max}}\}\leqslant q_0.
\]
Therefore, we obtain from  some simple  calculations that
\[
\left\{
\begin{aligned}
&\underline u_i(t,x;\xi_i)=\underline v_i(t,x;\xi_i)=0 &&\text{for}~x-c_it\notin \Omega_i,\\
&\underline u_i(t,x;\xi_i)=\frac{1}{\kappa_i}\underline v_i(t,x;\xi_i)=f_i(e^{\rho_i(-x+c_it+\xi_i)})\in(0,F_i^{\text{max}}]&&\text{for}~x-c_it\in\Omega_i,
\end{aligned}
\right.
\]
where
$
\Omega_i=(\xi_i-\rho_i^{-1}\ln S_i,~\xi_i-\rho_i^{-1}\ln R_i).
$

Next, we prove that the pair of functions  $(\underline u_i(t,x;\xi_i),\underline v_i(t,x;\xi_i))$ is a lower solution of system \eqref{1.1} for all $\xi_i\in\mathbb R$. When $x-c_it\notin \overline\Omega_i$, we have $\underline u_i(t,x;\xi_i)=\underline v_i(t,x;\xi_i)=0$ and
\[
\begin{aligned}
&\frac{\partial}{\partial t}\underline u_i(t,x;\xi_i)-k_1*\underline u_i(t,x;\xi_i)+\underline u_i(t,x;\xi_i)+\alpha \underline u_i(t,x;\xi_i)-h(\underline v_i(t,x;\xi_i))\leqslant 0,\\
&\frac{\partial}{\partial t}\underline v_i(t,x;\xi_i)-k_2*\underline v_i(t,x;\xi_i)+\underline v_i(t,x;\xi_i)+\beta \underline v_i(t,x;\xi_i)-g(\underline u_i(t,x;\xi_i))\leqslant 0.
\end{aligned}
\]
When $x-c_it\in \overline\Omega_i$, we have    $\underline u_i(t,x;\xi_i)=\frac{1}{\kappa_i}\underline v_i(t,x;\xi_i)=f_i(e^{\rho_i(-x+c_it+\xi_i)})$. Then it follows from \eqref{04.11}  that
\[
\begin{aligned}
&\frac{\partial}{\partial t}\underline u_i(t,x;\xi_i)-k_1*\underline u_i(t,x;\xi_i)+\underline u_i(t,x;\xi_i)+\alpha \underline u_i(t,x;\xi_i)-h(\underline v_i(t,x;\xi_i))\\
& \quad \leqslant (G_i^0-h_i\kappa_i)e^{\rho_i(-x+c_it+\xi_i)}-(G_i^+-h_i\kappa_i+P\kappa_i^{1+\delta_i})e^{\rho_i(1+\delta_i)(-x+c_it+\xi_i)}\\
&\quad\quad -(G_i^--h_i\kappa_i)L_ie^{\rho_i(1-\delta_i)(-x+c_it+\xi_i)}\\
\end{aligned}
\]
and
\[
\begin{aligned}
&\frac{\partial}{\partial t}\underline v_i(t,x;\xi_i)-k_2*\underline v_i(t,x;\xi_i)+\underline v_i(t,x;\xi_i)+\beta \underline v_i(t,x;\xi_i)-g(\underline u_i(t,x;\xi_i))\\
&\quad \leqslant (H_i^0\kappa_i-g_i)e^{\rho_i(-x+c_it+\xi_i)}-(H_i^+\kappa_i-g_i+P)e^{\rho_i(1+\delta_i)(-x+c_it+\xi_i)}\\
&\quad\quad -(H_i^-\kappa_i-g_i)L_ie^{\rho_i(1-\delta_i)(-x+c_it+\xi_i)}.\\
\end{aligned}
\]
From \eqref{04.10} and $L_i>\frac{1}{8}$, we have that
\[
\begin{aligned}
&(G_i^0-h_i\kappa_i)^2-4(G_i^+-h_i\kappa_i+P\kappa_i^{1+\delta_i})(G_i^--h_i\kappa_i)L_i<(G_i^0-h_i\kappa_i)^2(1-8L_i)<0,\\
&(H_i^0\kappa_i-g_i)^2-4(H_i^+\kappa_i-g_i+P)(H_i^-\kappa_i-g_i)L_i<(H_i^0\kappa_i-g_i)^2(1-8L_i)<0.
\end{aligned}
\]
Then Lemma \ref{th4.3} shows that when $x-c_it\in \overline\Omega_i$,
\[
\begin{aligned}
&\frac{\partial}{\partial t}\underline u_i(t,x;\xi_i)-k_1*\underline u_i(t,x;\xi_i)+\underline u_i(t,x;\xi_i)+\alpha \underline u_i(t,x;\xi_i)-h(\underline v_i(t,x;\xi_i))\leqslant 0,\\
&\frac{\partial}{\partial t}\underline v_i(t,x;\xi_i)-k_2*\underline v_i(t,x;\xi_i)+\underline v_i(t,x;\xi_i)+\beta \underline v_i(t,x;\xi_i)-g(\underline u_i(t,x;\xi_i))\leqslant 0.
\end{aligned}
\]
Therefore, the pair of functions  $(\underline u_i(t,x;\xi_i),\underline v_i(t,x;\xi_i))$ is a lower solution   for any $\xi_i\in\mathbb R$.

Finally, we are ready to prove  the lower bounds    of the spatial propagation in Theorem \ref{th2.3}. The ``forward-backward spreading'' method will be applied here.

\begin{proof}[\textbf{Proof of Theorem \ref{th2.3} (lower bounds)}]

From the assumptions in  Theorem  \ref{th2.3}, we have that $u_0(x_0)>0$ and $v_0(x_0)>0$ for some constant $x_0\in\mathbb R$.
By translating the $x$-axis, we can simply suppose that $x_0=0$.
Then there are two constants $q_1>0$ and $d>0$ such that
\begin{equation}\label{04.12}
u_0(x)\geqslant q_1,~v_0(x)\geqslant q_1~~~\text{for}~~x\in[-d,d].
\end{equation}
Now  we prove that for any small $\epsilon>0$ there is some constant $\nu\in(0,1)$ such that the solution $(u(t,x),v(t,x))$ of system \eqref{1.1} satisfies
\[
(u(T,X),v(T,X))\geqslant (\nu,\nu)~~\text{for all}~T>0,~X\in[c_2T,c_1T],
\]
where $c_1\in(c_r^*-\epsilon,c_r^*)$ and $c_2\in(c_l^*,c_l^*+\epsilon)$.
For any given $T>0$ and $X\in[c_2T,c_1T]$, we denote
\[
\mu=\frac{X-c_2T}{c_1T-c_2T}\in[0,1].
\]

First, we construct a set of lower solutions in the first time period  $[0,\mu T]$ as follows
\[
\left\{
\begin{aligned}
&\underline u_1(t,x;\xi_1)=\max\big\{0,~f_1(e^{\rho_1(-x+c_1t+\xi_1)})\big\},\\
&\underline v_1(t,x;\xi_1)=\max\big\{0,~\kappa_1f_1(e^{\rho_1(-x+c_1t+\xi_1)})\big\},
\end{aligned}
\right.~~\text{for}~t\in[0,\mu T],~x\in\mathbb R,
\]
where
$
\xi_1\in [-d/2+\rho_1^{-1}\ln R_1,~d/2+\rho_1^{-1}\ln S_1]
$
and $L_1$ is some constant in  $\left[\frac{1}{8},\frac{1}{4}\right)$, which is  close  to $\frac{1}{4}$ such that
\[
\max\left\{F_1^{\text{max}},\kappa_1F_1^{\text{max}}\right\}\leqslant\min\{q_0,q_1\}~~\text{and}~~\rho_1^{-1}(\ln S_1-\ln R_1)\leqslant d/2.
\]
Then it follows  that
\[
\left\{
\begin{aligned}
&\underline u_1(t,x;\xi_1)=\underline v_1(t,x;\xi_1)=0 &&\text{for}~x-c_1t\notin \Omega_1,\\
&\underline u_1(t,x;\xi_1)=\frac{1}{\kappa_1}\underline v_1(t,x;\xi_1)=f_1(e^{\rho_1(-x+c_1t+\xi_1)})>0&&\text{for}~x-c_1t\in\Omega_1
\end{aligned}
\right.
\]
with
\begin{equation}\label{04.13}
\Omega_1=(\xi_1-\rho_1^{-1}\ln S_1,~\xi_1-\rho_1^{-1}\ln R_1)\subseteq (-d,d).
\end{equation}

From the discussion above,   the pair of functions $(\underline u_1(t,x;\xi_1),\underline v_1(t,x;\xi_1))$ is a lower solution of system \eqref{1.1}.
Moreover, we obtain that
\[
u_1(t,x;\xi_1)\leqslant F_1^{\text{max}}\leqslant q_1,~~v_1(t,x;\xi_1)\leqslant \kappa_1F_1^{\text{max}}\leqslant q_1~~\text{for}~t\geqslant0,~x\in\mathbb R.
\]
It follows from \eqref{04.12} and \eqref{04.13} that for every $\xi_1\in[-d/2+\rho_1^{-1}\ln R_1,~d/2+\rho_1^{-1}\ln S_1]$,
\[
u_0(x)\geqslant\underline u_1(0,x;\xi_1),~~v_0(x)\geqslant\underline v_1(0,x;\xi_1),~~x\in\mathbb R.
\]
Therefore,  by Lemma \ref{th4.1} we have
\[
u(t,x)\geqslant \underline u_1(t,x;\xi_1),~v(t,x)\geqslant \underline v_1(t,x;\xi_1)~~\text{for}~t\in[0,\mu T],~x\in\mathbb R.
\]
If we denote  $z_1(t)=c_1t+\xi_1-\rho_1^{-1}\ln Y_1$ for $t\in [0,\mu T]$, then
\[
\begin{aligned}
&u(t,z_1(t))\geqslant \underline u_1(t,z_1(t);\xi_1)=f_1(Y_1)=F_1^{\text{max}},\\
&v(t,z_1(t))\geqslant \underline v_1(t,z_1(t);\xi_1)=\kappa_1 f_1(Y_1)=\kappa_1F_1^{\text{max}}.
\end{aligned}
\]
Furthermore, the arbitrariness of $\xi_1$  and $R_1<Y_1<S_1$ show that
\[
u(t,x)\geqslant F_1^{\text{max}},~v(t,x)\geqslant \kappa_1F_1^{\text{max}}~~\text{for all}~t\in[0,\mu T],~x\in[c_1t-d/2,~c_1t+d/2].
\]
Therefore, there is some constant $q_2=\min\{ F_1^{\text{max}},\kappa_1F_1^{\text{max}}\}$ such that
\begin{equation}\label{04.14}
u(\mu T,x)\geqslant q_2,~v(\mu T,x)\geqslant q_2~~\text{for}~x\in[c_1\mu T-d/2,~c_1\mu T+d/2].
\end{equation}

Next, we construct another set of lower solutions in the second  time period $[\mu T,T]$ as follows
\[
\left\{
\begin{aligned}
&\underline u_2(t,x;\xi_2)=\max\big\{0,~f_2(e^{\rho_2(-x+c_2t+\xi_2)})\big\},\\
&\underline v_2(t,x;\xi_2)=\max\big\{0,~\kappa_2f_2(e^{\rho_2(-x+c_2t+\xi_2)})\big\},
\end{aligned}
\right.~~\text{for}~t\in[\mu T,T],~x\in\mathbb R,
\]
where
$
\xi_2\in \big[(c_1-c_2)\mu T+\rho_2^{-1}\ln R_2,~(c_1-c_2)\mu T+\rho_2^{-1}\ln S_2\big]
$
and $L_2$ is some constant in  $[\frac{1}{8},\frac{1}{4})$, which is  close  to $\frac{1}{4}$ such that
\[
\max\big\{F_2^{\text{max}},\kappa_2F_2^{\text{max}}\big\}\leqslant q_2 ~~\text{and}~~\rho_2^{-1}(\ln S_2-\ln R_2)\leqslant d/2.
\]
Then it follows that
\[
\left\{
\begin{aligned}
&\underline u_2(t,x;\xi_2)=\underline v_2(t,x;\xi_2)=0 &&\text{for}~x-c_2t\notin \Omega_2,\\
&\underline u_2(t,x;\xi_2)=\frac{1}{\kappa_2}\underline v_2(t,x;\xi_2)=f_2(e^{\rho_2(-x+c_2t+\xi_2)})>0&&\text{for}~x-c_2t\in\Omega_2
\end{aligned}
\right.
\]
with
$
\Omega_2=(\xi_2-\rho_2^{-1}\ln S_2,~\xi_2-\rho_2^{-1}\ln R_2).
$

As stated above, the pair of functions $(\underline u_2(t,x;\xi_2),\underline v_2(t,x;\xi_2))$ is also a lower solution of system \eqref{1.1}.
At the time $t=\mu T$, we have that
\[
\left\{
\begin{aligned}
&\underline u_2(\mu T,x;\xi_2)=\underline v_2(\mu T,x;\xi_2)=0 &&\text{for}~x\notin c_2\mu T+\Omega_2,\\
&\underline u_2(\mu T,x;\xi_2)=\frac{1}{\kappa_2}\underline v_2(\mu T,x;\xi_2)\in(0,q_2)&&\text{for}~x \in c_2\mu T+\Omega_2,
\end{aligned}
\right.
\]
where
\[
c_2\mu T+\Omega_2\triangleq  (c_2\mu T+\xi_2-\rho_2^{-1}\ln S_2,~c_2\mu T+\xi_2-\rho_2^{-1}\ln R_2).
\]
It follows that $c_2\mu T+\Omega_2\subseteq (c_1\mu T-d/2,c_1\mu T-d/2)$.
Then we get from \eqref{04.14} that for every $\xi_2\in \big[(c_1-c_2)\mu T+\rho_2^{-1}\ln R_2,~(c_1-c_2)\mu T+\rho_2^{-1}\ln S_2\big]$,
\[
u(\mu T,x)\geqslant \underline u_2(\mu T,x;\xi_2),~v(\mu T,x)\geqslant \underline v_2(\mu T,x;\xi_2),~~x\in \mathbb R.
\]
Therefore, Lemma \ref{th4.1} implies that
\[
u(t,x)\geqslant \underline u_2(t,x;\xi_2),~v(t,x)\geqslant \underline v_2(t,x;\xi_2)~~\text{for}~t\in[\mu T, T],~x\in\mathbb R.
\]

If we denote $z_2(t)=c_2t+\xi_2-\rho_2^{-1}\ln Y_2$ for $t\in[\mu T, T]$, then
\[
\begin{aligned}
&u(t,z_2(t))\geqslant \underline u_1(t,z_2(t);\xi_2)=f_2(Y_2)=F_2^{\text{max}},\\
&v(t,z_2(t))\geqslant \underline v_1(t,z_2(t);\xi_2)=\kappa_2 f_2(Y_2)=\kappa_2F_2^{\text{max}}.
\end{aligned}
\]
Furthermore, the  arbitrariness of   $\xi_2$  and $R_2<Y_2<S_2$ show  that
\[
u(t,x)\geqslant F_2^{\text{max}},~v(t,x)\geqslant  \kappa_2F_2^{\text{max}}~~\text{for all}~t\in[\mu T, T],~x=c_2t+(c_1-c_2)\mu T.
\]
By taking $\nu=\min\{F_2^{\text{max}},~\kappa_2F_2^{\text{max}}\}$,  we get from $X=c_2T+(c_1-c_2)\mu T$  that
\[
u(T,X)\geqslant \nu,~v(T,X)\geqslant \nu~~\text{for}~T>0,~X\in[c_2T,c_1T].
\]
Therefore, for any small constant $\epsilon>0$ we have that
\[
\inf\limits_{(c_l^*+\epsilon)t \leqslant x\leqslant (c_r^*-\epsilon)t}(u(t, x),v(t,x))\geqslant(\nu, \nu)~~\text{for}~t>0.
\]
This completes the proof.
\end{proof}

\subsection{Upper bounds of spatial propagation}
\noindent

\begin{proof}[\textbf{Proof of Theorem \ref{th2.3} (upper bounds)}]

In this subsection, we prove that
\begin{equation}\label{04.15}
\sup\limits_{x\leqslant (c_l^*-\epsilon)t}(u(t, x),v(t,x))\rightarrow(0,0)~~\text{and}~\sup\limits_{x\geqslant (c_r^*+\epsilon)t}(u(t, x),v(t,x))\rightarrow(0,0)~\text{as}~t\rightarrow+\infty.
\end{equation}
First, we define the functions
\begin{equation}\label{04.99}
\left\{
\begin{aligned}
&\bar u(t,x)=\min\Big\{1,~\Gamma e^{\lambda_l^*(-x+c_l^*t)},~\Gamma e^{\lambda_r^*(-x+c_r^*t)}\Big\},\\
&\bar v(t,x)=\min\Big\{1,~b(\lambda_l^*)\Gamma e^{\lambda_l^*(-x+c_l^*t)},~b(\lambda_r^*)\Gamma e^{\lambda_r^*(-x+c_r^*t)}\Big\}
\end{aligned}
\right.
\end{equation}
for $t\geqslant0$ and $x\in\mathbb R$, where the function $b(\lambda)$ is defined by \eqref{04.5}. From the assumptions in Theorem \ref{th2.3}, we can take $\Gamma$ large enough such that  $\Gamma\geqslant \max\big\{1,~\Gamma_0,~\frac{1}{b(\lambda_l^*)},~\frac{1}{b(\lambda_r^*)}\big\}$ and
\begin{equation}\label{04.16}
\bar u(0,x)\geqslant u_0(x),~~\bar v(0,x)\geqslant v_0(x)~~\text{for}~~x\in\mathbb R.
\end{equation}

Next, we prove that the pair of  functions $(\bar u(t,x), \bar v(t,x))$ is an upper solution of system \eqref{1.1}.
When $x\leqslant c^*_lt+(\lambda_l^*)^{-1}\ln \Gamma$, we have that $\bar u(t,x)=\Gamma e^{\lambda_l^*(-x+c_l^*t)}$ and  $\bar v(t,x)\leqslant b(\lambda_l^*)\Gamma e^{\lambda_l^*(-x+c_l^*t)}$.
Then it follows from (H2) and \eqref{04.6} that
\[
\partial_t \bar u-k_1*\bar u+\bar u+\alpha \bar u-h(\bar v)\geqslant \big[G(c_l^*,\lambda_l^*)-h'(0)b(\lambda_l^*)\big]\Gamma e^{\lambda_l^*(-x+c_l^*t)}=0.
\]
Similarly, when $x\geqslant c^*_rt+(\lambda_r^*)^{-1}\ln \Gamma$, we get from (H2) and \eqref{04.6}  that
\[
\partial_t \bar u-k_1*\bar u+\bar u+\alpha \bar u-h(\bar v)\geqslant \big[G(c_r^*,\lambda_r^*)-h'(0)b(\lambda_r^*)\big]\Gamma e^{\lambda_r^*(-x+c_r^*t)}=0.
\]
If $x\in\big[c^*_lt+(\lambda_l^*)^{-1}\ln \Gamma,~c^*_rt+(\lambda_r^*)^{-1}\ln \Gamma\big]$, then $\bar u(t,x)=1$ and $\bar v(t,x)\leqslant1$,
which  implies that
\[
\partial_t \bar u-k_1*\bar u+\bar u+\alpha \bar u-h(\bar v)\geqslant\alpha-h(\bar v)\geqslant\alpha-h(1)=0.
\]
Therefore, we finally obtain that
\[
\partial_t \bar u-k_1*\bar u+\bar u+\alpha \bar u-h(\bar v)\geqslant0~~~\text{for all}~t>0,~x\in\mathbb R.
\]
Similarly, we can  obtain
\[
\partial_t \bar v-k_2*\bar v+\bar v+\beta \bar v-g(\bar u)\geqslant0~~~\text{for all}~t>0,~x\in\mathbb R.
\]

From Lemma \ref{th4.1} and \eqref{04.16}, it follows that
\[
(u(t,x),v(t,x))\leqslant(\bar u(t,x),\bar v(t,x))~~\text{for}~~t\geqslant0,~x\in\mathbb R.
\]
Then we have
\[
\begin{aligned}
&\sup\limits_{x\leqslant (c_l^*-\epsilon)t}(u(t, x),v(t,x))\leqslant\sup\limits_{x\leqslant (c_l^*-\epsilon)t}(\bar u(t, x),\bar v(t,x)) \leqslant \big(\Gamma e^{\lambda_l^*\epsilon t},~b(\lambda_l^*)\Gamma e^{\lambda_l^*\epsilon t}\big),\\
&\sup\limits_{x\geqslant (c_r^*+\epsilon)t}(u(t, x),v(t,x))\leqslant\sup\limits_{x\geqslant (c_r^*+\epsilon)t}(\bar u(t, x),\bar v(t,x))\leqslant \big(\Gamma e^{-\lambda_r^*\epsilon t},~b(\lambda_r^*)\Gamma e^{-\lambda_r^*\epsilon t}\big).
\end{aligned}
\]
Therefore, using $\lambda_l^*<0<\lambda_r^*$, we finish the proof of \eqref{04.15}.
\end{proof}

\begin{remark} \rm
The irreducibility of the linearized system at zero is a necessary property in this paper. In fact,  our idea of the new lower solution \eqref{03.51} is from the following  system
\[
\left\{\begin{aligned}
& u_t=k_1*u-u-\alpha u+(h'(0)-\eta)v+Pv^{1+\delta}, ~~t>0,~x\in\mathbb R,\\
& v_t=k_2*v-v-\beta v+(g'(0)-\eta)u+Pu^{1+\delta}, ~~~t>0,~x\in\mathbb R,
\end{aligned}
\right.
\]
where $\delta>0$ is an appropriate constant and $\eta>0$ is a  constant small enough, see the condition   \eqref{04.11}.
If the linearized system at zero is reducible (namely, $h'(0)$ or $g'(0)$ is equal to $0$), the above system becomes  non-cooperative and meanwhile Lemma \ref{th4.2}  does not hold.
Then there are not any $\rho_i$ and $\delta_i$ satisfying  \eqref{3.99}. Thus, we can not construct any lower solution  in the  form of  \eqref{03.51}.
Moreover, in some studies (for example Weinberger et al. \cite{WLL2002}) the irreducibility can be replaced by some other assumptions on the matrix in Frobenius form.

\end{remark}

\begin{remark} \rm
The linear and nonlinear selection of speed is an important problem in reaction-diffusion systems. In system \eqref{1.1}, the condition for linear selection is given by
\begin{equation}\label{06.3}
g(u) \leqslant g'(0)u~~\text{and}~~h(v)\leqslant h'(0)v.
\end{equation}
However, when \eqref{06.3} is not satisfied,  the upper solution  \eqref{04.99} becomes unavailable and thus the upper bound \eqref{04.15} of spatial propagation  is no longer right. In order to obtain the upper bound, we can use $g(u) \leqslant \hat{g}u$ and $h(v)\leqslant \hat{h}v$  instead of \eqref{06.3}, where
\[
~\hat{g}=\sup_{u\in(0,1]}\{g(u)/u\}~\text{and}~\hat{h}=\sup_{v\in(0,1]}\{h(v)/v\}.
\]
Under the same assumptions except \eqref{06.3} as in Theorem \ref{th2.3}, when $k_1$ and $k_2$ are  symmetric, we can obtain  that
\[
\left\{\begin{aligned}
&\lim\limits_{t\rightarrow+\infty}\sup\limits_{|x|\geqslant (c^++\epsilon)t}\big(u(t, x),v(t,x)\big)\rightarrow(0,0),\\
&\inf\limits_{|x|\leqslant (c^--\epsilon)t}\big(u(t, x),v(t,x)\big)\geqslant(\nu,\nu)~~\text{for all}~~t\geqslant 0,
\end{aligned}
\right.
\]
where the constants  $c^+$ and $c^-$ satisfy that $c^+\geqslant c^-$ and
\[
\begin{aligned}
&c^+\leqslant\inf_{\lambda\in\mathbb R^+}\left\{\frac{1}{2\lambda}\left[A(\lambda)+B(\lambda)+\sqrt{(A(\lambda)-B(\lambda))^2+4 \hat{h}\hat{g}}\right]\right\},\\
&c^-\geqslant\inf_{\lambda\in\mathbb R^+}\left\{\frac{1}{2\lambda}\left[A(\lambda)+B(\lambda)+\sqrt{(A(\lambda)-B(\lambda))^2+4 h'(0)g'(0)}\right]\right\}.
\end{aligned}
\]
However, it is  challenging to prove that $c^+=c^-$. For more results about the  linear and nonlinear  selection of speed, see e.g. Alhasanat and Ou\cite{AO2020},  Ma and Ou\cite{MO2019}, Ma et al. \cite{MHO2019} and Wang et al. \cite{WHO2020}.
\end{remark}

\section{Second type of initial data and symmetric kernels}
\noindent

In this section, under the assumption that $k_1$ and $k_2$ are symmetric, we prove the monotone property   and the spatial propagation result for the second type of initial data.

\subsection{Monotone property}
\noindent

The following theorem gives a monotone property result of system \eqref{1.1}.

\begin{theorem}\label{th2.6}
If $k_1(\cdot)$, $k_2(\cdot)$, $u_0(\cdot)$ and $v_0(\cdot)$ are symmetric   and decreasing on $\mathbb R^+$, so are the functions $u(t,\cdot)$ and $v(t,\cdot)$ at any time $t>0$, where $(u(t,x),v(t,x))$ is the solution of  \eqref{1.1}.
\end{theorem}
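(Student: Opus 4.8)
The plan is to prove the symmetry and monotonicity of $u(t,\cdot)$ and $v(t,\cdot)$ by combining a uniqueness argument (for symmetry) with the comparison principle applied to spatial translates (for monotonicity). For the symmetry part, I would first observe that if $(u,v)$ solves \eqref{1.1} with symmetric data and symmetric kernels, then the reflected pair $(\tilde u(t,x),\tilde v(t,x))\triangleq(u(t,-x),v(t,-x))$ also solves \eqref{1.1}: indeed, the convolution term satisfies $k_i*\tilde u(t,x)=\int_{\mathbb R}k_i(x-y)u(t,-y)\,dy=\int_{\mathbb R}k_i(-x-z)u(t,z)\,dz=\int_{\mathbb R}k_i(x+z)u(t,z)\,dz$, which equals $(k_i*u)(t,-x)$ precisely because $k_i$ is even; all the reaction terms are pointwise, and the initial data are even. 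By the uniqueness of solutions to \eqref{1.1} (which follows from the comparison principle, Lemma \ref{th4.1}, applied in both directions, or from the standard theory in \cite{A2010}), we conclude $\tilde u\equiv u$ and $\tilde v\equiv v$; that is, $u(t,\cdot)$ and $v(t,\cdot)$ are even for every $t>0$.

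For the monotonicity part, fix any $s>0$ and define the shifted pair $u^s(t,x)\triangleq u(t,x+s)$, $v^s(t,x)\triangleq v(t,x+s)$, which again solves \eqref{1.1} (the nonlocal operator and the reaction are translation invariant), now with initial data $u_0(\cdot+s)$, $v_0(\cdot+s)$. I want to compare $(u^s,v^s)$ with $(u,v)$ on the half-line $x\geqslant 0$. The key elementary fact is that if $u_0$ is even and decreasing on $\mathbb R^+$, then for $x\geqslant 0$ and $s\geqslant 0$ one has $u_0(x+s)\leqslant u_0(x)$: for $x\geqslant 0$ this is monotonicity on $\mathbb R^+$, and since we only need $x\geqslant 0$ here, evenness is not even required for this inequality — but evenness will matter for the boundary behavior. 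So at $t=0$, $u_0(x+s)\leqslant u_0(x)$ and $v_0(x+s)\leqslant v_0(x)$ for all $x\geqslant 0$. The plan is to show this ordering is preserved for $t>0$ on $x\geqslant 0$, from which decreasingness of $u(t,\cdot)$ on $\mathbb R^+$ follows by letting $s\downarrow 0$ (or directly, since $s$ is arbitrary).

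The main obstacle is that the comparison principle as stated (Lemma \ref{th4.1}) is posed on all of $\mathbb R$, whereas the inequality $u_0(\cdot+s)\leqslant u_0(\cdot)$ only holds on $\mathbb R^+$; the nonlocal term $k_i*u$ at a point $x\geqslant 0$ sees values of $u$ at negative arguments, so one cannot naively restrict to a half-line. The standard device to overcome this is to exploit the symmetry already established: since $u(t,\cdot)$ and $v(t,\cdot)$ are even, we have $u(t,y)=u(t,|y|)$, and one rewrites the nonlocal term using a \emph{folded} (symmetrized) kernel on $\mathbb R^+$. Concretely, for $x\geqslant 0$,
\[
(k_i*u)(t,x)=\int_0^\infty \big[k_i(x-y)+k_i(x+y)\big]\,u(t,y)\,dy,
\]
and likewise $(k_i*u^s)(t,x)=\int_0^\infty\big[k_i(x-y)+k_i(x+y)\big]u^s(t,y)\,dy$ using that $u^s(t,\cdot)=u(t,\cdot+s)$ together with evenness of $u(t,\cdot)$. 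Thus on $\mathbb R^+$ the system for $(u,v)$ and for $(u^s,v^s)$ becomes a genuine half-line nonlocal system with the nonnegative folded kernel $\bar k_i(x,y)\triangleq k_i(x-y)+k_i(x+y)$, the reaction terms are cooperative (by (H1)–(H2), $h'\geqslant 0$, $g'\geqslant 0$), and the ordering holds at $t=0$. A comparison principle for this half-line cooperative nonlocal system — proved by exactly the same argument as Lemma \ref{th4.1}, e.g. via a Gronwall estimate on $w^\pm=\max\{0,\pm(u^s-u)\}$ after multiplying by a suitable exponential weight — then yields $u^s(t,x)\leqslant u(t,x)$ and $v^s(t,x)\leqslant v(t,x)$ for all $t>0$, $x\geqslant 0$. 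Since $s>0$ was arbitrary, $u(t,\cdot)$ and $v(t,\cdot)$ are decreasing on $\mathbb R^+$, completing the proof. I would remark that one must be slightly careful that the folded system is still cooperative and that the cross-coupling (through $h(v)$ and $g(u)$) respects the ordering, but this is immediate from (H2); the only real work is setting up the half-line comparison principle, and that is entirely parallel to the already-cited Lemma \ref{th4.1}.
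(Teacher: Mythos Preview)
Your symmetry argument is correct and is exactly what the paper does. The monotonicity argument, however, has a genuine gap.

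The folded-kernel identity
\[
(k_i*u)(t,x)=\int_0^\infty\big[k_i(x-y)+k_i(x+y)\big]u(t,y)\,dy,\qquad x\geqslant0,
\]
is valid for $u$ because $u(t,\cdot)$ is even. You then assert the same identity for $u^s(t,\cdot)=u(t,\cdot+s)$, but $u^s$ is \emph{not} even: the identity would require $u(t,-y+s)=u(t,y+s)$ for all $y>0$, i.e.\ that $u(t,\cdot)$ be symmetric about $s$, which is false in general. Thus $u^s$ does \emph{not} satisfy the half-line folded system you wrote down, and the proposed half-line comparison between $u$ and $u^s$ cannot be set up as stated. A related red flag is that your argument never invokes the hypothesis that $k_i$ is decreasing on $\mathbb R^+$; that hypothesis is in fact essential.

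The paper avoids this obstruction by a different reduction. It fixes $y>0$, sets $m_1(t,x)=u(t,x+2y)-u(t,x)$ (and similarly $m_2$), and works on the half-line $x>-y$. The point of the shift $2y$ and the base point $-y$ is that, by evenness of $u(t,\cdot)$, the difference $m_1(t,\cdot)$ is \emph{odd} about $x=-y$; in particular $m_1(t,-y)=0$. Using this odd symmetry one can fold the convolution of the \emph{difference}:
\[
(k_1*m_1)(t,x)=\int_0^\infty\big[k_1(x+y-w)-k_1(x+y+w)\big]\,m_1(t,-y+w)\,dw,\qquad x>-y,
\]
and here the bracket is nonnegative precisely because $k_1$ is even and decreasing on $\mathbb R^+$. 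This is what makes the half-line maximum-principle/barrier argument go through (it is the content of the inequality the paper quotes from \cite{XLR2018}). If you want to salvage your approach, you must work with the difference and exploit its odd symmetry about the midpoint $-s/2$, not try to fold $u$ and $u^s$ separately; and you will then see exactly where the monotonicity of $k_i$ enters.
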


\begin{proof}
First, the symmetry properties of $u(t,\cdot)$ and $v(t,\cdot)$ can be obtained easily. Indeed, by considering the system
\[
\left\{\begin{aligned}
&\frac{\partial}{\partial t}  w_1(t,x)=k_1*w_1(t,x)-w_1(t,x)-\alpha w_1(t,x)+h(w_2(t,x)), ~~t>0,~x\in\mathbb R,\\
&\frac{\partial}{\partial t}  w_2(t,x)=k_2*w_2(t,x)-w_2(t,x)-\beta w_2(t,x)+g(w_1(t,x)), ~~~t>0,~x\in\mathbb R,\\
&w_1(0,x)=u_0(-x),~w_2(0,x)=v_0(-x),~~x\in\mathbb R
\end{aligned}
\right.
\]
and using the uniqueness property of the solution, we have $u(t,x)=w_1(t,x)=u(t,-x)$ and $v(t,x)=w_2(t,x)=v(t,-x)$ for $t\geqslant0$, $x\in\mathbb R$.

Next, we prove the monotone property.
For a fixed constant $y>0$, we define
\[
m_1(t,x)=u(t,x+2y)-u(t,x),~m_2(t,x)=v(t,x+2y)-v(t,x)~~\text{for}~t\geqslant0,~x\in\mathbb R.
\]
Then the symmetric properties of $u(t,\cdot)$ and $v(t,\cdot)$ imply that
\[
m_1(t,-y)=m_2(t,-y)=0~~\text{for}~t\geqslant0.
\]
At time $t=0$, we easily get that
\[
\begin{aligned}
&m_1(0,x)\leqslant 0,~m_2(0,x)\leqslant 0~~~\text{for}~x>-y,\\
&m_1(0,x)\geqslant 0,~m_2(0,x)\geqslant 0~~~\text{for}~x<-y.
\end{aligned}
\]
In order to show that $u(t,\cdot)$ and $v(t,\cdot)$ are decreasing in $\mathbb R^+$, we prove that
\begin{equation}\label{05.12}
m_1(t,x)\leqslant 0,~m_2(t,x)\leqslant 0~\text{for all}~t>0,~x>-y.
\end{equation}
Indeed, if \eqref{05.12} holds, then  $u(t,x+2y)\leqslant u(t,x)$ and $v(t,x+2y)\leqslant v(t,x)$ for all $x>-y$ and $t>0$, which imply that $u(t,\cdot)$ and $v(t,\cdot)$ are decreasing in $\mathbb R^+$.

Now we prove \eqref{05.12}.
Since $h(\cdot)\in C^1([0,1])$, there is some constant  $M>0$ such that for all $t\geqslant0$ and $x\in\mathbb R$,
\begin{equation}\label{05.13}
\begin{aligned}
\frac{\partial}{\partial t} m_1(t,x)&=k_1*m_1(t,x)-m_1(t,x)-\alpha m_1(t,x)+h(v(t,x+2y))-h(v(t,x))\\
&\leqslant k_1*m_1(t,x)-m_1(t,x)-\alpha m_1(t,x)+M  m_2(t,x).
\end{aligned}
\end{equation}
Now we suppose that \eqref{05.12} does not hold, which means that there are two constants $T_0>0$ and $\varepsilon>0$ such that
\begin{equation}\label{05.14}
m_1(t,x)<\varepsilon e^{Kt},~m_2(t,x)<\varepsilon e^{Kt}~\text{for all}~t\in(0,T_0),~x>-y
\end{equation}
and at least one of the following  two results holds:
\begin{equation}\label{05.15}
\sup\limits_{x>-y}\{m_1(T_0,x)\}=\varepsilon e^{KT_0},~~m_2(T_0,x)\leqslant\varepsilon e^{KT_0}~~\text{for}~~x>-y;
\end{equation}
\[
m_1(T_0,x)\leqslant\varepsilon e^{KT_0}~\text{for}~x>-y,~~~\sup\limits_{x>-y}\{m_2(T_0,x)\}=\varepsilon e^{KT_0}.
\]
Here $K$ is a positive constant satisfying $K>\frac{4}{3}(M+1)-\alpha$.   Without loss of generality, we assume    \eqref{05.15} holds.
As stated in the proof of  \cite[Lemma 2.2]{XLR2018}, when $m_1(t,x)\geqslant0$, it holds that
\begin{equation}\label{05.16}
k_1*m_1(t,x)-m_1(t,x)\leqslant \varepsilon e^{Kt}~~\text{for}~t\in(0,T_0],~x>-y.
\end{equation}
From \eqref{05.15}, at least one of the following cases must hold:
\begin{description}
  \item[Case 1] there is $x_0\in(-y,+\infty)$ such that $m_1(T_0,x_0)=\sup\limits_{x>-y}\left\{m_1(T_0,x)\right\}=\varepsilon e^{KT_0}$,
  \item[Case 2]  $\limsup\limits_{x\rightarrow+\infty}\{m_1( T_0,x)\}=\varepsilon e^{KT_0}$.
\end{description}

If Case 1 holds,   it follows that
\[
\left.\frac{\partial}{\partial t}\left(m_1(t,x_0)-\varepsilon e^{Kt}\right)\right|_{t=T_0}\geqslant 0,
\]
which means
\[
\frac{\partial}{\partial t} m_1(T_0,x_0)\geqslant\varepsilon K e^{KT_0}.
\]
Then  from \eqref{05.15} and \eqref{05.16} we get
\[
\begin{aligned}
&\frac{\partial}{\partial t} m_1(T_0,x_0)-k_1*m_1(T_0,x_0)+m_1(T_0,x_0)+\alpha m_1(T_0,x_0)-M  m_2(T_0,x_0)\\
& \quad \geqslant ( K -1+\alpha -M) \varepsilon e^{KT_0}>0.
\end{aligned}
\]
It is a contradiction to \eqref{05.13}, which implies that \eqref{05.12} holds.

If Case 2 holds,  there is some constant  $x_1$ large enough such that
\[
m_1(T_0,x_1)>\frac{3}{4}\varepsilon e^{KT_0}.
\]
For all $\sigma>0$, we define
\[
\rho_{\sigma}(t,x)= \left[\frac{1}{2}+\sigma q_0(x)\right]\varepsilon e^{Kt}~\text{for}~t\in[0,T_0],~x\in\mathbb R,
\]
where $q_0(x)$ is a smooth and  increasing function satisfying
\[
q_0(x)=\left\{
\begin{aligned}
&1~~~\text{for}~x\leqslant x_1,\\
&3~~~\text{for}~x\geqslant x_1+1.
\end{aligned}
\right.
\]
Let $\sigma^*$ be a constant denoted by
\[
\sigma^*=\inf\Big\{\sigma>0~|~m_1(t,x)-\rho_{\sigma}(t,x)\leqslant0 ~~\text{for}~t\in[0,T_0],~x>-y \Big\}.
\]
Moreover, some simple  calculations yield that $\frac{1}{4}\leqslant\sigma^*\leqslant\frac{1}{2}$ and
\[
\rho_{\sigma^*}(t,x)\geqslant\frac{5}{4}\varepsilon e^{Kt}>m_1(t,x)~~\text{for}~t\in[0,T_0],~x\geqslant x_1+1.
\]
From the definition of $\sigma^*$, there must exist $T_1\in(0,T_0]$ and $x_2\in(-y,x_1+1)$ such that
\[
m_1(T_1,x_2)-\rho_{\sigma^*}(T_1,x_2)=\sup\limits_{t\in[0,T_0],~x>-y}\big\{m_1(t,x)-\rho_{\sigma^*}(t,x)\big\}=0,
\]
Then we have that
\[
\begin{aligned}
&m_1(T_1,x_2)=\rho_{\sigma^*}(T_1,x_2)\geqslant\rho_{\frac{1}{4}}(T_1,x_2)\geqslant \frac{3}{4}\varepsilon e^{KT_1},\\
&\frac{\partial}{\partial t}m_1(T_1,x_2)\geqslant \frac{\partial}{\partial t}\rho_{\sigma^*}(T_1,x_2)=K\rho_{\sigma^*}(T_1,x_2)\geqslant K\rho_{\frac{1}{4}}(T_1,x_2)\geqslant \frac{3}{4}K\varepsilon e^{KT_1}.
\end{aligned}
\]
From \eqref{05.14} and \eqref{05.16}, it follows
\[
\begin{aligned}
&\frac{\partial}{\partial t}m_1(T_1,x_2)-k_1*m_1(T_1,x_2)+m_1(T_1,x_2)+\alpha m_1(T_1,x_2)-Mm_2(T_1,x_2)\\
&\quad \geqslant (\frac{3}{4}K-1+\frac{3}{4}\alpha-M)\varepsilon  e^{KT_1}>0,\\
\end{aligned}
\]
which contradicts \eqref{05.13}. Therefore, we finish the proof of  \eqref{05.12}.
\end{proof}

\subsection{Spatial propagation}
\noindent

In this subsection, we study  the spatial propagation of system \eqref{1.1} for the second type of initial data and symmetric kernels. The following theorem is the main result.
\begin{theorem}\label{th2.5}
Assume that {\rm(H1)} and {\rm(H2)} hold. Let $k_1$, $k_2$  satisfy {\rm(K1)} and be symmetric on $\mathbb R$ and decreasing in $\mathbb R^+$. If $u_0(\cdot)$ and $v_0(\cdot)$ are two continuous functions satisfying $0< u_0(x)\leqslant 1$, $0< v_0(x)\leqslant 1$ for $x\in\mathbb R$ and
\[
u_0(x)\sim O(e^{-\lambda |x|}),~~v_0(x)\sim O(e^{-\lambda |x|})~~\text{as}~|x|\rightarrow +\infty~\text{with}~ \lambda\in(0,\lambda^*),
\]
then for any  $\epsilon\in(0,c(\lambda))$ there is some constant $\nu\in(0,1)$ such that the solution  of system \eqref{1.1} has the following properties
\[
\left\{\begin{aligned}
&\lim\limits_{t\rightarrow+\infty}\sup\limits_{|x|\geqslant (c(\lambda)+\epsilon)t}\big(u(t, x),v(t,x)\big)\rightarrow(0,0),\\
&\inf\limits_{|x|\leqslant c(\lambda)t}\big(u(t, x),v(t,x)\big)\geqslant(\nu,\nu)~~\text{for all}~~t\geqslant 0,
\end{aligned}
\right.
\]
where $\lambda^*\triangleq\lambda_r^*=-\lambda_l^*$.
\end{theorem}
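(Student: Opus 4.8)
The plan is to prove each of the two estimates by the comparison principle (Lemma~\ref{th4.1}): for the upper bound I would build an explicit exponential supersolution moving at speed $c(\lambda)$, and for the lower bound a ``fat‑tailed'' subsolution moving at speed $c(\lambda)$ that is obtained by riding on the exact exponential mode of the linearized system; the monotone property (Theorem~\ref{th2.6}) is then used to reduce the lower estimate on the whole interval $|x|\le c(\lambda)t$ to a pointwise estimate at the two endpoints $x=\pm c(\lambda)t$.

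For the upper bound I would take $\bar u(t,x)=\min\{1,\Gamma e^{\lambda(c(\lambda)t-|x|)}\}$ and $\bar v(t,x)=\min\{1,b(\lambda)\Gamma e^{\lambda(c(\lambda)t-|x|)}\}$, with $b(\lambda)$ as in \eqref{04.5}--\eqref{04.6} and $\Gamma\ge\max\{1,1/b(\lambda)\}$ large enough that $(\bar u(0,\cdot),\bar v(0,\cdot))\ge(u_0,v_0)$ (possible since $u_0,v_0\le Ce^{-\lambda|x|}$). Since $k_1,k_2$ are symmetric and each $\bar u(t,\cdot),\bar v(t,\cdot)$ is unimodal with log-decay rate $\lambda$, one has $k_i*\bar u(t,x)\le\big(\int k_i(y)e^{\lambda y}\,dy\big)\bar u(t,x)$ on the unsaturated set; there the identities $G(c(\lambda),\lambda)=h'(0)b(\lambda)$ and $H(c(\lambda),\lambda)b(\lambda)=g'(0)$ of \eqref{04.6} turn the differential inequalities into equalities, while on the band $\{\bar u=1\}$ they follow from {\rm(H1)}--{\rm(H2)} ($h(1)=\alpha\le h'(0)$, $g(1)=\beta\le g'(0)$, monotonicity, and $\bar v\ge b(\lambda)$ there). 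Hence $(\bar u,\bar v)$ is a supersolution, $(u,v)\le(\bar u,\bar v)$, and on $|x|\ge(c(\lambda)+\epsilon)t$ one gets $(u,v)\le(1+b(\lambda))\Gamma e^{-\lambda\epsilon t}\to(0,0)$.

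For the lower bound, first replace $(u_0,v_0)$ by $\tilde u_0(x)=\inf_{|y|\le|x|}u_0(y)$, $\tilde v_0(x)=\inf_{|y|\le|x|}v_0(y)$, which are continuous, positive, symmetric, nonincreasing on $\mathbb R^+$, dominated by $(u_0,v_0)$, and still bounded below by $ce^{-\lambda|x|}$ at infinity. By Lemma~\ref{th4.1} the solution $(\tilde u,\tilde v)$ with these data satisfies $(\tilde u,\tilde v)\le(u,v)$, and by Theorem~\ref{th2.6} each $\tilde u(t,\cdot),\tilde v(t,\cdot)$ is symmetric and nonincreasing on $\mathbb R^+$, so $\inf_{|x|\le c(\lambda)t}\tilde u(t,x)=\tilde u(t,c(\lambda)t)$ and likewise for $\tilde v$; thus it suffices to bound $\tilde u(t,c(\lambda)t),\tilde v(t,c(\lambda)t)$ below by a fixed positive constant. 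To that end I would use the exact solution $(e^{-\lambda(x-c(\lambda)t)},b(\lambda)e^{-\lambda(x-c(\lambda)t)})$ of the system linearized at $(0,0)$ and set, with $z=x-c(\lambda)t$,
\[
\underline u=\big(a_1e^{-\lambda z}-a_2e^{-\mu z}\big)^+,\qquad
\underline v=\big(a_1b(\lambda)e^{-\lambda z}-a_2b_2e^{-\mu z}\big)^+,
\]
choosing $\mu>\lambda$ so close to $\lambda$ that $\mu<\lambda(1+\delta_0)$ and $G(c(\lambda),\mu)H(c(\lambda),\mu)>g'(0)h'(0)$ — the latter holds just to the right of $\lambda$ because, differentiating the identity $G(c(\mu),\mu)H(c(\mu),\mu)=g'(0)h'(0)$ of \eqref{04.4} and using $c'(\lambda)<0$ (Theorem~\ref{th2.1}), one finds $\frac{\partial}{\partial\mu}[G(c(\lambda),\mu)H(c(\lambda),\mu)]|_{\mu=\lambda}=-\lambda c'(\lambda)(G(c(\lambda),\lambda)+H(c(\lambda),\lambda))>0$, which is the mechanism behind Lemma~\ref{th4.2}; $b_2$ in the resulting nonempty interval $(g'(0)/H(c(\lambda),\mu),\,G(c(\lambda),\mu)/h'(0))$; the \emph{fixed} ratio $a_2/a_1\in(0,\min\{1,b(\lambda)/b_2\})$, so that $\nu:=\min\{a_1-a_2,\,a_1b(\lambda)-a_2b_2\}>0$ equals $\min\{\underline u(t,c(\lambda)t),\underline v(t,c(\lambda)t)\}$; and $a_1>0$ small enough that $\underline u,\underline v\le p_0$, $(\underline u(0,\cdot),\underline v(0,\cdot))\le(\tilde u_0,\tilde v_0)$, and the nonlinear defect is absorbed. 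In the part where $\underline u,\underline v>0$ one bounds $k_i*\underline u\ge\int k_i(w)(a_1e^{-\lambda(z-w)}-a_2e^{-\mu(z-w)})\,dw$; the dispersion identities annihilate the $e^{-\lambda z}$ terms, leaving a strictly negative multiple of $a_2e^{-\mu z}$ plus a defect $\le C\underline v^{1+\delta_0}\le C(a_1b(\lambda))^{1+\delta_0}e^{-\lambda(1+\delta_0)z}$ (from $g,h\in C^{1+\delta_0}$ near $0$), and since $\mu<\lambda(1+\delta_0)$ while the support in $z$ is a fixed half-line, shrinking $a_1$ makes the $e^{-\mu z}$ term dominate; the vanishing regions of $\underline u,\underline v$ and the kink are treated directly using $G(c(\lambda),\mu),H(c(\lambda),\mu)>0$ and the choice of $b_2$. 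Then Lemma~\ref{th4.1} gives $(\tilde u,\tilde v)\ge(\underline u,\underline v)$, so $\tilde u(t,c(\lambda)t),\tilde v(t,c(\lambda)t)\ge\nu$ for $t>0$; at $t=0$ the infimum equals $(u_0(0),v_0(0))\ge(\nu,\nu)$, which finishes the lower bound for all $t\ge0$.

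The routine ingredients are the supersolution computation and the reduction to symmetric decreasing data. The main obstacle is the subsolution: all the differential inequalities have to hold simultaneously in the exponential regime, in the transition zone, and in the generally mismatched regions where $\underline u$ and $\underline v$ vanish, which is what forces the coupled choice of $\mu$, $b_2$ and the amplitudes above; it is precisely the use of the \emph{exact} leading mode $e^{-\lambda(x-c(\lambda)t)}$ (rather than a mode of the $g'(0)-\eta$, $h'(0)-\eta$ linearization used in \eqref{03.51}) that produces the lower bound on exactly $\{|x|\le c(\lambda)t\}$ instead of on a slightly smaller cone.
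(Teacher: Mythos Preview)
Your proposal is correct and follows essentially the same route as the paper: the upper bound via the supersolution $\min\{1,\Gamma e^{\lambda(c(\lambda)t-|x|)}\}$ (with the $b(\lambda)$-companion), and the lower bound by first passing to symmetric--decreasing data (Theorem~\ref{th2.6}) and then using a subsolution of the form $(a_1e^{-\lambda z}-a_2e^{-\mu z})^+$ with $\mu>\lambda$ close to $\lambda$. The only differences are cosmetic: the paper writes $\mu=\lambda(1+\delta)$, takes the specific choice $b_2=b(\lambda(1+\delta))$ (which lies in your admissible interval by \eqref{05.1}), and parametrizes by $a_1=\gamma$ small together with $a_2=\gamma L$ with $L$ \emph{large} (so the subsolution vanishes at $z=0$ and the bound is read off at the maximizer, then pulled back by monotonicity), whereas you fix the ratio $a_2/a_1<1$ and shrink $a_1$; your derivation of $G(c(\lambda),\mu)H(c(\lambda),\mu)>g'(0)h'(0)$ by differentiating \eqref{04.4} is equivalent to the paper's use of Lemma~\ref{th3.1}.
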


\begin{remark}
\rm
From Theorem \ref{th2.5} and the definition of $c(\lambda)$ in \eqref{02.1}, we obtain  a relationship between the spreading speeds and the  exponentially decaying rate  of initial data. Moreover,   Theorem \ref{th2.1} shows that $c'(\lambda)<0$ for all $\lambda\in(0,\lambda^*)$; namely,  the spreading speed $c(\lambda)$ is decreasing with respect to  $\lambda\in (0,\lambda^*)$. Meanwhile, we also have that $\inf\{c(\lambda)~|~\lambda\in(0,\lambda^*)\}=c^*$, which implies that the minimum value of $c(\lambda)$ coincides with the spreading speed for the first type of initial value and symmetric kernels.
\end{remark}
Before proving Theorem \ref{th2.5}, we give the following lemma.
\begin{lemma}\label{th3.1}
For any $\lambda\in(0,\lambda_r^*)$, there is a unique constant $\delta_\lambda>0$ such that
\[
c(\lambda)=c(\lambda+\lambda\delta_\lambda)~~\text{and}~~c(\eta)<c(\lambda)~~\text{for}~\eta\in(\lambda,\lambda+\lambda\delta_\lambda).
\]
Similarly, for any $\lambda\in(\lambda_l^*,0)$, there is a unique constant $\delta_\lambda>0$ such that
\[
c(\lambda)=c(\lambda+\lambda\delta_\lambda)~~\text{and}~~c(\eta)>c(\lambda)~~\text{for}~\eta\in(\lambda+\lambda\delta_\lambda,\lambda).
\]
\end{lemma}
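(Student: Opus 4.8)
The plan is to read off the shape of the graph of $c(\cdot)$ from Theorem~\ref{th2.1} and then produce $\delta_\lambda$ by an elementary intermediate value argument. By Theorem~\ref{th2.1}, $c$ is $C^1$ on $\mathbb R\setminus\{0\}$, strictly decreasing on $(0,\lambda_r^*)$ with $c(\lambda_r^*)=c_r^*=\inf_{\mathbb R^+}c$, strictly increasing on $(\lambda_r^*,+\infty)$, strictly increasing on $(-\infty,\lambda_l^*)$, and strictly decreasing on $(\lambda_l^*,0)$ with $c(\lambda_l^*)=c_l^*=\sup_{\mathbb R^-}c$. To run the argument I first record the behaviour at infinity: since $\int_\mathbb R k_1(x)\,dx=1$ and $k_1$ is continuous and symmetric (equivalently, (K2) holds), $k_1$ is positive on a neighbourhood of some $\pm a$ with $a>0$, so $\int_\mathbb R k_1(x)e^{\lambda x}\,dx\ge c_0 e^{a|\lambda|}$ for some $c_0>0$ and all large $|\lambda|$; hence $A(\lambda)/\lambda\to+\infty$ as $\lambda\to+\infty$ and $A(\lambda)/\lambda\to-\infty$ as $\lambda\to-\infty$, and since $D(\lambda)>A(\lambda)$ the same limits hold for $c(\lambda)=D(\lambda)/\lambda$.

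Now fix $\lambda\in(0,\lambda_r^*)$. Since $c$ is strictly decreasing on $(0,\lambda_r^*)$ we have $c(\lambda)>c_r^*$, and since $c$ is continuous and strictly increasing on $[\lambda_r^*,+\infty)$ with $c(\lambda_r^*)=c_r^*<c(\lambda)$ and $c(\mu)\to+\infty$ as $\mu\to+\infty$, the intermediate value theorem yields $\mu\in(\lambda_r^*,+\infty)$ with $c(\mu)=c(\lambda)$, unique there by strict monotonicity. I set $\delta_\lambda\triangleq\mu/\lambda-1$; then $\mu=\lambda+\lambda\delta_\lambda$, and $\mu>\lambda_r^*>\lambda>0$ forces $\delta_\lambda>0$, so $c(\lambda)=c(\lambda+\lambda\delta_\lambda)$. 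For uniqueness of $\delta_\lambda$, any $\mu'>\lambda$ with $c(\mu')=c(\lambda)$ cannot lie in $(\lambda,\lambda_r^*]$, where $c<c(\lambda)$ strictly because $c$ is strictly decreasing and $c(\lambda_r^*)=c_r^*<c(\lambda)$, so $\mu'$ must be the unique point in $(\lambda_r^*,+\infty)$. Finally, for $\eta\in(\lambda,\lambda+\lambda\delta_\lambda)=(\lambda,\mu)$: if $\eta\in(\lambda,\lambda_r^*]$ then $c(\eta)<c(\lambda)$ by strict monotonicity together with $c(\lambda_r^*)=c_r^*<c(\lambda)$; if $\eta\in(\lambda_r^*,\mu)$ then $c(\eta)<c(\mu)=c(\lambda)$ since $c$ is strictly increasing on $(\lambda_r^*,\mu)$. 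Hence $c(\eta)<c(\lambda)$ on the whole interval, as claimed.

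For $\lambda\in(\lambda_l^*,0)$ one can either repeat the argument verbatim on the left (using that $c$ is strictly increasing on $(-\infty,\lambda_l^*)$, that $c(\lambda_l^*)=c_l^*>c(\lambda)$, and that $c(\mu)\to-\infty$ as $\mu\to-\infty$), or, since $k_1,k_2$ are symmetric here, invoke $c(\mu)=-c(-\mu)$ and $\lambda_l^*=-\lambda_r^*$: applying the case just proved to $\tilde\lambda\triangleq-\lambda\in(0,\lambda_r^*)$ gives $\delta_{\tilde\lambda}>0$ with $c(\tilde\lambda)=c(\tilde\lambda(1+\delta_{\tilde\lambda}))$ and $c(\eta)<c(\tilde\lambda)$ for $\eta\in(\tilde\lambda,\tilde\lambda(1+\delta_{\tilde\lambda}))$, and negating every argument turns this into the stated conclusion with $\delta_\lambda\triangleq\delta_{\tilde\lambda}$, uniqueness transferring the same way.

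The computations are routine; the one point requiring care is that $c(\lambda)$ be attained a \emph{second} time, i.e. the growth $c(\lambda)\to\pm\infty$ as $\lambda\to\pm\infty$. This is exactly where the kernel hypotheses enter---$k_i$ must carry mass away from the origin so that $\int_\mathbb R k_i(x)e^{\lambda x}\,dx$ grows super-linearly---and it is the only ingredient not already contained in Theorem~\ref{th2.1}.
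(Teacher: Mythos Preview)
Your proof is correct and follows essentially the same route as the paper's: both establish $c(\lambda)\to\pm\infty$ as $\lambda\to\pm\infty$ from $D(\lambda)>A(\lambda)$ and the growth of $A(\lambda)/\lambda$, and then combine this with the monotonicity profile of $c$ (equation~\eqref{03.5} in the proof of Theorem~\ref{th2.1}) to locate the unique second preimage via the intermediate value theorem. One small remark: the strict increase of $c$ on $(\lambda_r^*,+\infty)$ that you invoke is not part of the \emph{statement} of Theorem~\ref{th2.1} but of its proof (specifically~\eqref{03.5}), so it would be cleaner to cite that directly; also, the paper additionally records $\lim_{\lambda\to0^\pm}c(\lambda)=\pm\infty$, but as you implicitly recognise, those limits are not actually needed for the argument.
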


\begin{proof}
 Since $D(\lambda)>A(\lambda)$ for all $\lambda\in\mathbb R$ and
\[
\lim\limits_{\lambda\rightarrow+\infty}\frac{A(\lambda)}{\lambda}=+\infty,~~\lim\limits_{\lambda\rightarrow-\infty}\frac{A(\lambda)}{\lambda}=-\infty,
\]
from \eqref{02.1} we get that $\lim\limits_{\lambda\rightarrow+\infty}c(\lambda)=+\infty$ and   $\lim\limits_{\lambda\rightarrow-\infty}c(\lambda)=-\infty$.
On the other hand, from $D(0)\in(0,+\infty)$ it follows that $\lim\limits_{\lambda\rightarrow0^+}c(\lambda)=+\infty$ and $\lim\limits_{\lambda\rightarrow0^-}c(\lambda)=-\infty$. Therefore, by \eqref{03.5}, we finish the proof of Lemma \ref{th3.1}.
\end{proof}

Now we are ready to prove Theorem \ref{th2.5}.

\begin{proof}[\textbf{Proof of Theorem \ref{th2.5}}]
For any $\lambda\in(0,\lambda^*)$, let $\delta_\lambda$ denote the constant in Lemma \ref{th3.1}, then
$
c(\lambda)>c\big(\lambda(1+\delta)\big)~~\text{for}~\delta\in(0,\delta_\lambda).
$
We denote $G(c,\lambda)$, $H(c,\lambda)$  and $b(\lambda)$   by \eqref{04.1}, \eqref{04.2} and \eqref{04.5},  respectively.
Since
$
\frac{\partial}{\partial c}G(c,\lambda)=\frac{\partial}{\partial c}H(c,\lambda)=\lambda\in(0,\lambda^*),
$
from \eqref{04.3} we get
\[
\left\{
\begin{aligned}
G\big(c(\lambda),\lambda(1+\delta)\big)>G\big(c\big(\lambda(1+\delta)\big),\lambda(1+\delta)\big)>0,\\
H\big(c(\lambda),\lambda(1+\delta)\big)>H\big(c\big(\lambda(1+\delta)\big),\lambda(1+\delta)\big)>0,
\end{aligned}~~~~\text{for}~\lambda\in(0,\lambda^*),~\delta\in(0,\delta_\lambda).
\right.
\]
Therefore, it follows from \eqref{04.6} that
\begin{equation}\label{05.1}
\frac{g'(0)}{H\big(c(\lambda),\lambda(1+\delta)\big)}<b\big(\lambda(1+\delta)\big)<\frac{G(c(\lambda),\lambda(1+\delta))}{h'(0)}~~\text{for}~\lambda\in(0,\lambda^*),~ \delta\in(0,\delta_\lambda).
\end{equation}

\textbf{Step 1.} Now we prove that
\begin{equation}\label{05.2}
\sup\limits_{|x|\geqslant (c(\lambda)+\epsilon)t}\big(u(t, x),v(t,x)\big)\rightarrow(0,0)~~\text{as}~t\rightarrow+\infty.
\end{equation}
For any given $\lambda\in(0,\lambda^*)$, define
\begin{equation}
\left\{
\begin{aligned}
&\bar u(t,x)=\min\big\{1,~\Gamma e^{\lambda(-|x|+c(\lambda)t)}\big\},\\
&\bar v(t,x)=\min\big\{1,~b(\lambda)\Gamma e^{\lambda(-|x|+c(\lambda)t)}\big\},
\end{aligned}
\right.~~\text{for}~t\geqslant0,~x\in\mathbb R,
\end{equation}
where  the constant $\Gamma$ is large enough such that  $\Gamma\geqslant\max\big\{1,\frac{1}{b(\lambda)}\big\}$. By the assumptions about initial data in Theorem \ref {th2.5}, we can  take $\Gamma$ larger if necessary such that
\begin{equation}\label{05.3}
\bar u(0,x)\geqslant u_0(x),~\bar v(0,x)\geqslant v_0(x)~~\text{for}~x\in\mathbb R.
\end{equation}

Now we prove that the pair of  functions  $(\bar u(t,x),\bar v(t,x))$ is an upper solution of system \eqref{1.1}.
If $|x|\leqslant c(\lambda)t+\lambda^{-1}\ln \Gamma$,  we have $\bar u(t,x)=1$ and $\bar v(t,x)\leqslant1$. Then it follows from (H1) and (H2) that
\[
\partial_t \bar u-k_1*\bar u+\bar u+\alpha \bar u-h(\bar v)\geqslant\alpha-h(\bar v)\geqslant \alpha-h(1)=0.
\]
If $|x|> c(\lambda)t+\lambda^{-1}\ln \Gamma$, we get  $\bar u(t,x)=\Gamma e^{\lambda(-|x|+c(\lambda)t)}$ and $\bar v(t,x)\leqslant b(\lambda)\Gamma e^{\lambda(-|x|+c(\lambda)t)}$.
By (H2) and \eqref{04.6}, some simple calculations imply that
\[
\partial_t \bar u-k_1*\bar u+\bar u+\alpha \bar u-h(\bar v)\geqslant \big[G(c(\lambda),\lambda)-h'(0)b(\lambda)\big]\Gamma e^{\lambda(-|x|+c(\lambda)t)}=0.
\]
We finally get that
\begin{equation}\label{05.4}
\partial_t \bar u-k_1*\bar u+\bar u+\alpha \bar u-h(\bar v)\geqslant0~~\text{for all}~t>0,~x\in\mathbb R.
\end{equation}
Meanwhile, if $|x|\leqslant c(\lambda)t+\lambda^{-1}\ln (b(\lambda)\Gamma)$, we have $\bar v(t,x)= 1$ and $\bar u(t,x)\leqslant1$. Then it follows from  (H1) and (H2) that
\[
\partial_t \bar v-k_2*\bar v+\bar v+\beta \bar v-g(\bar u)\geqslant\beta-g(\bar u)\geqslant \beta-g(1)=0.
\]
If $|x|> c(\lambda)t+\lambda^{-1}\ln (b(\lambda)\Gamma)$,  we get that  $\bar v(t,x)=b(\lambda)\Gamma e^{\lambda(-|x|+c(\lambda)t)}$ and $\bar u(t,x)\leqslant\Gamma e^{\lambda(-|x|+c(\lambda)t)}$. By (H2) and \eqref{04.6}, some simple calculations show
\[
\partial_t \bar v-k_2*\bar v+\bar v+\beta \bar v-g(\bar u)\geqslant\big[H(c(\lambda),\lambda)b(\lambda)-g'(0)\big]\Gamma e^{\lambda(-|x|+c(\lambda)t)}=0.
\]
We finally get that
\begin{equation}\label{05.5}
\partial_t \bar v-k_2*\bar v+\bar v+\beta \bar v-g(\bar u)\geqslant0~~\text{for all}~t>0,~x\in\mathbb R.
\end{equation}
Therefore, $(\bar u(t,x),\bar v(t,x))$ is an upper solution of system \eqref{1.1}.

By  \eqref{05.3}-\eqref{05.5}, Lemma \ref{th4.1} shows that
\[
(u(t,x),v(t,x))\leqslant(\bar u(t,x),\bar v(t,x))~~\text{for}~~t\geqslant0,~x\in\mathbb R.
\]
Then we have
\[
\sup\limits_{|x|\geqslant (c(\lambda)+\epsilon)t}(u(t, x),v(t, x))\leqslant\sup\limits_{|x|\geqslant (c(\lambda)+\epsilon)t}(\bar u(t, x),\bar v(t, x))\leqslant(\Gamma e^{-\lambda\epsilon t},b(\lambda)\Gamma e^{-\lambda\epsilon t}),
\]
which implies that \eqref{05.2} holds.

\textbf{Step 2.} Next we  prove that
\[
\big(u(t, x),v(t,x)\big)\geqslant(\nu,\nu)~~\text{for all}~~t\geqslant 0,~|x|\leqslant c(\lambda)t.
\]
From  the assumptions in Theorem \ref{th2.5}, there exists a continuous symmetric function $w_0(x)$, which is decreasing in $\mathbb R^+$ and satisfies that
\[
u_0(x)\geqslant w_0(x),~~v_0(x)\geqslant w_0(x)~~\text{for}~x\in\mathbb R,~~w_0(x)=\left\{
\begin{aligned}
&\gamma_0 e^{-\lambda|x|}, &&|x|\geqslant y_0,\\
&p_1\triangleq\gamma_0 e^{-\lambda y_0}, &&|x|\leqslant y_0,\\
\end{aligned}
\right.
\]
where $\gamma_0$ and $y_0$ are two positive constants.
Let $p$ and $\delta$ denote two constants satisfying $p=\min\{p_0,p_1\}$ and $0<\delta<\min\{\delta_0,\delta_\lambda\}$. Then by $g(\cdot),~h(\cdot)\in C^{1+\delta_0}\big([0,p_0]\big)$, we can find some constant $M>0$ such that
\begin{equation}\label{05.6}
g(u)\geqslant g'(0)u-Mu^{1+\delta}~\text{for}~u\in[0,p],~~~~h(v)\geqslant h'(0)v-Mv^{1+\delta}~\text{for}~v\in[0,p].
\end{equation}
Let $(w_1(t,x), w_2(t,x))$ denote the solution of the following system
\[
\left\{\begin{aligned}
&\partial_t w_1(t,x)=k_1*w_1(t,x)-w_1(t,x)-\alpha w_1(t,x)+h(w_2(t,x)), ~~t>0,~x\in\mathbb R,\\
&\partial_t w_2(t,x)=k_2*w_2(t,x)-w_2(t,x)-\beta w_2(t,x)+g(w_1(t,x)), ~~~t>0,~x\in\mathbb R,\\
&w_1(0,x)=w_0(x),~w_2(0,x)=w_0(x),~~x\in\mathbb R.
\end{aligned}
\right.
\]
Then Lemma \ref{th4.1} implies that
\begin{equation}\label{05.7}
(u(t,x),v(t,x))\geqslant (w_1(t,x), w_2(t,x))~~\text{for all}~t\geqslant0,x\in\mathbb R.
\end{equation}
Since $k_1(\cdot)$ and $k_2(\cdot)$ are symmetric   and decreasing on $\mathbb R^+$, it follows from Theorem \ref{th2.6}  that $w_1(t,\cdot)$ and $w_2(t,\cdot)$ are also  symmetric and decreasing on $\mathbb R^+$ at any time $t\geqslant0$.

For any given $\lambda\in(0,\lambda^*)$, we define
\[
\left\{
\begin{aligned}
&\underline u(t,x)=\max\big\{0,~\gamma e^{\lambda(-|x|+c(\lambda)t)}-\gamma L e^{\lambda(1+\delta)(-|x|+c(\lambda)t)}\big\},\\
&\underline v(t,x)=\max\big\{0,~\gamma b(\lambda) e^{\lambda(-|x|+c(\lambda)t)}-\gamma L b(\lambda\big(1+\delta)\big) e^{\lambda(1+\delta)(-|x|+c(\lambda)t)}\big\}
\end{aligned}
\right.
\]
for all $t\geqslant0$ and $x\in\mathbb R$,
where $b(\lambda)$ is defined by \eqref{04.5}, $\gamma$ is some positive constant satisfying
\[
0<\gamma\leqslant\min\Big\{\gamma_0,~\frac{\gamma_0}{b(\lambda)}\Big\},
\]
and $L\in\mathbb R^+$ is large enough such that
\begin{equation}\label{05.8}
\begin{aligned}
&L\geqslant\max\bigg\{1,\frac{b(\lambda)}{b(\lambda(1+\delta))},~\gamma^\delta p^{-\delta},~\gamma^\delta p^{-\delta}\frac{ [b(\lambda)]^{1+\delta}}{b\big(\lambda(1+\delta)\big)},\\
&
~~~\frac{M\gamma^{\delta} [b(\lambda)]^{1+\delta}}{G(c(\lambda),\lambda(1+\delta))-h'(0)b(\lambda(1+\delta))},
~\frac{M\gamma^{\delta}}{b(\lambda(1+\delta))H(c(\lambda),\lambda(1+\delta))-g'(0)}\bigg\}.
\end{aligned}
\end{equation}

We easily get that
\[
\underline u(0,x)\leqslant \gamma_0 e^{-\lambda|x|},~~\underline v(0,x)\leqslant \gamma_0 e^{-\lambda|x|}~~\text{for all}~ x\in\mathbb R.
\]
If we consider the function $f(y)=Ay-By^{1+\delta}$ for $y\in\mathbb R^+$ with $A,B\in\mathbb R^+$, whose maximum value equals $f^{\text{max}}\triangleq A^{\frac{1+\delta}{\delta}}B^{-\frac{1}{\delta}}\delta(1+\delta)^{-\frac{1+\delta}{\delta}}$, then we have
\[
\begin{aligned}
&\underline u(t,x)\leqslant f_1^{\text{max}}\triangleq\gamma L^{-\frac{1}{\delta}}\delta(1+\delta)^{-\frac{1+\delta}{\delta}}\leqslant p\leqslant p_1,\\
&\underline v(t,x)\leqslant f_2^{\text{max}}\triangleq\gamma L^{-\frac{1}{\delta}} \big[b(\lambda)\big]^{\frac{1+\delta}{\delta}}\big[b(\lambda(1+\delta))\big]^{-\frac{1}{\delta}}\delta(1+\delta)^{-\frac{1+\delta}{\delta}}\leqslant p\leqslant p_1
\end{aligned}
\]
for all $t\geqslant0$ and $x\in\mathbb R$.
Therefore, the definition of $w_0(\cdot)$ shows that
\begin{equation}\label{05.9}
w_0(x)\geqslant\underline u(0,x),~~w_0(x)\geqslant\underline v(0,x)~~\text{for all}~x\in\mathbb R.
\end{equation}

We now verify that $\big(\underline u(t,x),~\underline v(t,x)\big)$ is a lower solution of system \eqref{1.1}. When $|x|\leqslant c(\lambda)t+(\lambda\delta)^{-1}\ln L$, we easily get  $\underline u(t,x)=0$. Then from (H1) and (H2), it follows that
\[
\partial_t \underline u-k_1*\underline u+\underline u+\alpha \underline u-h(\underline v)\leqslant-h(\underline v)\leqslant0.
\]
When $|x|> c(\lambda)t+(\lambda\delta)^{-1}\ln L$,  we have
\[
\begin{aligned}
&\underline u(t,x)=\gamma e^{\lambda(-|x|+c(\lambda)t)}-\gamma L e^{\lambda(1+\delta)(-|x|+c(\lambda)t)},\\
&\underline v(t,x)\geqslant\gamma b(\lambda) e^{\lambda(-|x|+c(\lambda)t)}-\gamma L b(\lambda\big(1+\delta)\big) e^{\lambda(1+\delta)(-|x|+c(\lambda)t)}.
\end{aligned}
\]
Then by \eqref{05.6}, some simple calculations imply that
\[
\begin{aligned}
&\partial_t \underline u-k_1*\underline u+\underline u+\alpha \underline u-h(\underline v)\\
& \quad \leqslant \gamma\big[G(c(\lambda),\lambda)-h'(0)b(\lambda)\big] e^{\lambda(-|x|+c(\lambda)t)}\\
&\quad\quad \big\{\gamma L\big[G(c(\lambda),\lambda(1+\delta))-h'(0)b\big(\lambda(1+\delta)\big)\big]-M\big[\gamma b(\lambda)\big]^{1+\delta}\big\}e^{\lambda(1+\delta)(-|x|+c(\lambda)t)}.
\end{aligned}
\]
From \eqref{04.6}, \eqref{05.1} and \eqref{05.8}, it follows that
\[
\partial_t \underline u-k_1*\underline u+\underline u+\alpha \underline u-h(\underline v)\leqslant0~~~\text{for}~~|x|> c(\lambda)t+(\lambda\delta)^{-1}\ln L.
\]
Therefore, we finally prove that
\begin{equation}\label{05.10}
\partial_t \underline u-k_1*\underline u+\underline u+\alpha \underline u-h(\underline v)\leqslant0~~\text{for all}~~t>0,~x\in\mathbb R.
\end{equation}
Similarly, we can also prove
\begin{equation}\label{05.11}
\partial_t \underline v-k_2*\underline v+\underline v+\beta \underline v-g(\underline u)\leqslant0~~\text{for all}~~t>0,~x\in\mathbb R.
\end{equation}

From \eqref{05.9}-\eqref{05.11}, Lemma \ref{th4.1} shows that
\[
(w_1(t,x),w_2(t,x))\geqslant(\underline u(t,x),\underline v(t,x))~~\text{for}~~t\geqslant0,~x\in\mathbb R.
\]
Then  some simple calculations imply that
\[
\begin{aligned}
&w_1(t,x)\geqslant\underline u(t,x)=f_1^{\text{max}},~~\text{when}~|x|=c(\lambda)t+(\lambda\delta)^{-1}\ln [(1+\delta)L],\\
&w_2(t,x)\geqslant\underline v(t,x)=f_2^{\text{max}},~~\text{when}~|x|=c(\lambda)t+(\lambda\delta)^{-1}\ln \Big[(1+\delta)L\frac{b(\lambda(1+\delta))}{b(\lambda)}\Big].
\end{aligned}
\]
Since $w_1(t,\cdot)$ and $w_2(t,\cdot)$ are symmetric and decreasing in $\mathbb R^+$ at any time $t\geqslant0$, by taking $\nu=\min\{f_1^{\max},f_2^{\max}\}>0$, we can get from $L\geqslant\max\big\{1,\frac{b(\lambda)}{b(\lambda(1+\delta))}\big\}$ that
\[
w_1(t,x)\geqslant \nu,~~w_2(t,x)\geqslant \nu~~\text{for}~t\geqslant0,~|x|\leqslant c(\lambda)t.
\]
Therefore, by \eqref{05.7} we  prove  that
$
\big(u(t, x),v(t,x)\big)\geqslant(\nu,\nu)~~\text{for all}~~t\geqslant 0,~|x|\leqslant c(\lambda)t.
$
\end{proof}

\begin{remark} \rm
In Theorem \ref{th2.5}, we assume that the initial data $u_0$ and $v_0$ have the same exponentially decaying behavior. When their decaying behaviors are different,  the spatial propagation problem is  more difficult and there are some interesting phenomena. For example, our paper \cite{XLR2017}  shows that the component with exponentially unbounded initial data (for example,   decaying algebraically) can accelerate the component with exponentially decaying  initial data.
However, to the best of our knowledge, when all components decay  exponentially  but their decaying rates are different, there is no study about the interaction among components.
We think that the component with smaller decaying rate could  accelerate that with bigger decaying rate.
The fundamental reason of this acceleration phenomenon is that the growth sources of one component   could come from  other components. For more results about the  acceleration among components, see e.g. Coulon and Yangari \cite{CY2017} and Xu et al. \cite{XLL2018}.
\end{remark}

\section{Applications}

In this section we give some applications of the theoretical results to the control of epidemic whose infectious agent is carried by migratory birds. We consider the question whether it is possible that the epidemic spreads only along the flight route of migratory birds and the spatial propagation against the flight route fails.
Throughout this section, we suppose that the positive parameters $\alpha$, $\beta$, $g'(0)$ and $h'(0)$ in system \eqref{1.1}  have already been determined. Now we assume some specific forms of the kernel functions $k_1$ and $k_2$.

\subsection{Normal distribution}
\noindent

Suppose that the migratory birds fly at a constant speed  $a\in\mathbb R$ and  the infectious agent has its own moving ability. In system \eqref{1.1}, we assume that  $k_1$ and $k_2$ satisfy
\[
k_1(x)=\frac{1}{\sqrt{2\pi\sigma_1}}\exp\left(-\frac{(x-a)^2}{2\sigma_1}\right)~~\text{and}~~k_2(x)=\frac{1}{\sqrt{2\pi\sigma}}\exp\left(-\frac{x^2}{2\sigma}\right).
\]
Here the  expectation $a$ of $k_1$ represents the movements of
infectious agent caused by  migratory flight and  the variance $\sigma_1\in \mathbb R^+$ describes the strength of its own moving  ability.
The  expectation of $k_2$ is $0$  because humans usually return after leaving their own residences.
The variance $\sigma\in \mathbb R^+$   describes  the intensity  of the movements of infectious humans.

By observing the  migration flight of  birds and the moving  ability  of infectious agent, we suppose that the  parameters $a$ and $\sigma_1$ can be determined. We  also  suppose that $a\geqslant0$; otherwise just consider the new spatial variable $y=-x$. Finally, our question becomes  how to restrict the movements of infectious humans such that  the epidemic spreads only along the flight route  and the spatial propagation against the flight route fails; namely we need to find a proper parameter $\sigma$ such that $0<c_l^*<c_r^*$.

Define a constant $r$ which can describe the asymmetry level of $k_1$ as follows
\[
r\triangleq a/\sqrt{2\sigma_1}.
\]

\begin{remark}\label{re6.0}\rm
Intuitively, the asymmetry level of a probability density function $k$ could be measured by the ratio  of $M_1(k)=\int_{\mathbb R^+}k(x)xdx$ to $M_2(k)=\int_{\mathbb R^-}k(x)|x|dx$.
By some calculations, we have that
\[
M_1(k_1)/ M_2(k_1)=\varphi(r) \triangleq2\left(\frac{\exp(-r^2)}{r\sqrt{\pi}}+\text{erf}(r)-1\right)^{-1}+1,
\]
where   $\text{erf}(\cdot)$ is the error function defined by $\text{erf}(r)=\frac{2}{\sqrt{\pi}}\int_0^r \exp{(-t^2)}dt$.
It is easy to check that $\varphi(\cdot)$ is strictly increasing.
Therefore, we can use $r$ to describe the asymmetry level of $k_1$.
\end{remark}
We define another important constant  of system \eqref{1.1} by
\[
\mathcal K \triangleq\beta\left(\alpha+1-\exp(-r^2)\right)\left/\left(g'(0)h'(0)\right)\right. \in\mathbb R^+.
\]
Note that $\mathcal K$ is strictly increasing with respect to $r$.
Next we   show that $\mathcal K$ can describe  the change   of  spatial propagation of system \eqref{1.1}  caused by the asymmetry of $k_1$.

\begin{corollary}\label{pro6.1}
If $\mathcal K >1$, then there is a constant $\sigma^*\in\mathbb R^+$ such that
\begin{itemize}
\item[(i)] when $0<\sigma<\sigma^*$, the spatial propagation against the flight route fails; namely $0<c_l^*<c_r^*$,
\item[(ii)] when $\sigma>\sigma^*$,  the spatial propagation  happens along two directions (along and against the flight route); namely $c_l^*<0<c_r^*$,
\item[(iii)] when $\sigma=\sigma^*$, it is the critical state; namely $0=c_l^*<c_r^*$.
\end{itemize}
Moreover, if $\mathcal K\leqslant1$, then $c_l^*<0<c_r^*$ holds for any $\sigma\in(0,+\infty)$.
\end{corollary}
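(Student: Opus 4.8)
The plan is to recast the statement entirely in terms of the set $\Lambda$ from Section 2 and then follow how $\Lambda$ moves as the variance $\sigma$ varies. Since $k_1$ and $k_2$ here are Gaussian densities with means $a$ and $0$ and variances $\sigma_1$ and $\sigma$ (in the paper's normalisation), one computes $\int_{\R}k_1(x)e^{\lambda x}\,dx=e^{a\lambda+\sigma_1\lambda^2/2}$ and $\int_{\R}k_2(x)e^{\lambda x}\,dx=e^{\sigma\lambda^2/2}$, so that $A(\lambda)=e^{a\lambda+\sigma_1\lambda^2/2}-1-\alpha$ and $B(\lambda)=e^{\sigma\lambda^2/2}-1-\beta$. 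The quadratic $a\lambda+\sigma_1\lambda^2/2$ attains its minimum $-a^2/(2\sigma_1)=-r^2$ at $\lambda_\dagger:=-a/\sigma_1\le0$, hence $e^{a\lambda+\sigma_1\lambda^2/2}\ge e^{-r^2}$ with equality only at $\lambda_\dagger$; and $e^{\sigma\lambda^2/2}\ge1$, so $1+\beta-e^{\sigma\lambda^2/2}<\beta$ for every $\lambda\ne0$. I would first reduce the two-sided problem to a one-sided one: since $B$ is even and $e^{-a\lambda}\le e^{a\lambda}$ for $\lambda\ge0$ (recall $a\ge0$), we have $B(-\lambda)=B(\lambda)$ and $A(-\lambda)\le A(\lambda)$ for $\lambda>0$, so $\lambda\in\Lambda\cap\R^+$ would force $A(-\lambda)B(-\lambda)\ge A(\lambda)B(\lambda)\ge g'(0)h'(0)$ with $A(-\lambda),B(-\lambda)<0$, i.e.\ $-\lambda\in\Lambda$; but $\Lambda$ is a closed interval (Theorem \ref{th2.2}) not containing $0$ (as $A(0)B(0)=\alpha\beta<g'(0)h'(0)$), so it cannot contain $\pm\lambda$. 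Thus $\Lambda\cap\R^+=\varnothing$ always, and when $a=0$ the same symmetry forces $\Lambda=\varnothing$. Consequently only cases (i), (iii), (v) of Theorem \ref{th2.2} can occur: $c_r^*>0$ in all of them, and the sign of $c_l^*$ is governed by whether $\Lambda\cap\R^-$ is empty, a singleton, or has nonempty interior; equivalently, by the sign dictionary in the proof of Theorem \ref{th2.2}, by whether $D(\lambda)<0$ for some $\lambda<0$.

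Next I would establish the uniform bound, valid for all $\sigma>0$: for $\lambda<0$ with $A(\lambda),B(\lambda)<0$,
\[
A(\lambda)B(\lambda)=\big(1+\alpha-e^{a\lambda+\sigma_1\lambda^2/2}\big)\big(1+\beta-e^{\sigma\lambda^2/2}\big)<\big(1+\alpha-e^{-r^2}\big)\beta=\mathcal K\,g'(0)h'(0),
\]
using the two minima facts above. Hence if $\mathcal K\le1$, then $A(\lambda)B(\lambda)<g'(0)h'(0)$ for all such $\lambda$, so $\Lambda\cap\R^-=\varnothing$, and together with the previous step $\Lambda=\varnothing$; Theorem \ref{th2.2}(i) then yields $c_l^*<0<c_r^*$, which is the last assertion (and it is consistent with the Remark following Theorem \ref{th2.2}, since $E(k_1)=e^{-r^2}$ and $E(k_2)=1$).

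It remains to handle $\mathcal K>1$, which forces $r>0$, hence $a>0$ and $\lambda_\dagger<0$. The engine is monotonicity in $\sigma$: for fixed $\lambda\ne0$, $B(\lambda)$ is strictly increasing in $\sigma$, and since $\partial D/\partial B>0$ (a direct computation from the formula for $D$), $D(\lambda)$ is strictly increasing in $\sigma$; therefore $c(\lambda)=D(\lambda)/\lambda$ is strictly decreasing in $\sigma$ for $\lambda<0$, and so is $c_l^*(\sigma)=\sup_{\lambda<0}c(\lambda)=c(\lambda_l^*(\sigma))$ (the supremum being attained, by Theorem \ref{th2.1}), while a standard maximum-theorem argument gives continuity of $\sigma\mapsto c_l^*(\sigma)$. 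For the two endpoints: at $\lambda=\lambda_\dagger$ one has
\[
A(\lambda_\dagger)B(\lambda_\dagger)=\big(1+\alpha-e^{-r^2}\big)\big(1+\beta-e^{\sigma a^2/(2\sigma_1^2)}\big)\longrightarrow \mathcal K\,g'(0)h'(0)>g'(0)h'(0)\quad\text{as }\sigma\to0^+,
\]
with $A(\lambda_\dagger),B(\lambda_\dagger)<0$, so for $\sigma$ small $\lambda_\dagger\in\mathrm{int}(\Lambda)$, i.e.\ $D(\lambda_\dagger)<0$, whence $c_l^*(\sigma)>0$; on the other hand, as $\sigma\to+\infty$ the set $\{\lambda<0:B(\lambda)<0\}=(-\sqrt{2\ln(1+\beta)/\sigma},\,0)$ shrinks to $\{0\}$, on which $A(\lambda)\to A(0)=-\alpha$ uniformly while $|B(\lambda)|<\beta$, so $A(\lambda)B(\lambda)<(\alpha+\varepsilon_0)\beta<g'(0)h'(0)$ for $\sigma$ large (picking $\varepsilon_0$ with $(\alpha+\varepsilon_0)\beta<g'(0)h'(0)$, possible since $\alpha\beta<g'(0)h'(0)$), so $\Lambda=\varnothing$ and $c_l^*(\sigma)<0$. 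Continuity and strict monotonicity of $c_l^*(\cdot)$ then produce a unique $\sigma^*\in(0,+\infty)$ with $c_l^*(\sigma^*)=0$, and $c_l^*(\sigma)>0$ for $\sigma<\sigma^*$ (case (v): $0<c_l^*<c_r^*$, so propagation against the flight route fails), $c_l^*(\sigma)<0$ for $\sigma>\sigma^*$ (case (i): $c_l^*<0<c_r^*$), $c_l^*(\sigma^*)=0$ (case (iii): $0=c_l^*<c_r^*$), always with $c_r^*>0$.

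The main obstacle is the last paragraph: proving the strict monotonicity of $c_l^*(\cdot)$ together with its continuity (needed to pin down $\sigma^*$ and, in particular, the critical case (iii)), and --- most of all --- identifying the $\sigma\to0^+$ limit of the relevant product as exactly $\mathcal K\,g'(0)h'(0)$. This last point is where the asymmetry of $k_1$ enters quantitatively, through $e^{-r^2}=\min_{\lambda}\int_{\R}k_1(x)e^{\lambda x}\,dx$; everything else is the routine bookkeeping of Gaussian moment generating functions.
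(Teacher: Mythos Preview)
Your proposal is correct and follows essentially the same route as the paper: compute the Gaussian moment generating functions, locate the minimiser $\lambda_\dagger=-a/\sigma_1$, use it to identify the $\sigma\to0^+$ behaviour as $\mathcal K\,g'(0)h'(0)$, let $\Lambda^B$ collapse as $\sigma\to+\infty$, and interpolate via monotonicity in $\sigma$ together with Theorem~\ref{th2.2}. The only cosmetic difference is packaging: the paper tracks the set-valued map $\sigma\mapsto\Lambda(\sigma)$ and its nesting $\Lambda(\sigma')\subseteq\Lambda(\sigma)$ for $\sigma'>\sigma$, whereas you track the scalar $\sigma\mapsto c_l^*(\sigma)$ directly through the strict monotonicity of $D(\lambda)$ in $\sigma$; and the paper shows $\Lambda\subseteq\mathbb R^-$ via $\partial_\lambda(AB)\big|_{\lambda=0}=-a\beta\le0$ rather than your reflection argument. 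Both packagings are sound and yield the same $\sigma^*$.
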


\begin{proof}
From \eqref{02.6},  some calculations show that
\[
\begin{aligned}
&A(\lambda)=\int_\mathbb R k_1(x)e^{\lambda x}dx-1-\alpha=\exp\left(a\lambda+\frac{\sigma_1}{2}\lambda^2\right)-1-\alpha,\\
&B(\lambda)=\int_\mathbb R k_2(x)e^{\lambda x}dx-1-\beta=\exp\left(\frac{\sigma}{2}\lambda^2\right)-1-\beta.
\end{aligned}
\]
Recall the following sets defined  in the proof of Theorem \ref{th2.2}
\[
\begin{aligned}
&\Lambda^A=\{\lambda\in\mathbb R~\big|~A(\lambda)<0\},~~\Lambda^B=\{\lambda\in\mathbb R~\big|~B(\lambda)<0\},\\
&\Lambda=\big\{\lambda\in\mathbb R~\big|~A(\lambda)B(\lambda)\geqslant g'(0)h'(0),~A(\lambda)<0,~B(\lambda)<0\big\}.
\end{aligned}
\]
We know that $\Lambda^A$ and $\Lambda^B$ are two open intervals and  $\Lambda$ is a closed interval in $\mathbb R$. Moreover, it is easy to check that $\Lambda\subseteq \Lambda^A\cap\Lambda^B$.
Since
\[
\left.\frac{\partial}{\partial \lambda}(B(\lambda)A(\lambda))\right|_{\lambda=0}=-a\beta\leqslant0,
\]
we get that $\Lambda\subseteq\mathbb R^-$ when $a>0$ and $\Lambda=\varnothing$ by $A(0)B(0)<h'(0)g'(0)$ when $a=0$.

Next, in order to  study the relation between $\Lambda$ and $\sigma$, we consider a function  $\Lambda(\cdot): \sigma\mapsto\Lambda$ which is from $\mathbb R^+$ to the set that consists of all closed intervals in $\mathbb R$.
From
\[
\begin{aligned}
&\frac{\partial B}{\partial \sigma}=\frac{1}{2}\lambda^2\exp\left(\frac{\sigma}{2}\lambda^2\right)>0~\text{for}~\lambda\in\mathbb R,\\
&\frac{\partial|AB|}{\partial \sigma}=A\frac{\partial B}{\partial \sigma}<0~~\text{for}~~\lambda\in\Lambda^A\cap\Lambda^B,
\end{aligned}
\]
it follows that
\begin{equation}\label{06.1}
\Lambda(\sigma')\subseteq \Lambda(\sigma)~~\text{for any}~\sigma'>\sigma
\end{equation}
and this inclusion is strict when $\Lambda(\sigma)\neq\varnothing$.
By the continuity of $B$ with respect to $\sigma$,  we know  that $\Lambda(\cdot)$ is also continuous,
which means that  both its lower bound and   upper bound are continuous with respect to $\sigma$ when $\Lambda\neq\varnothing$.

When $\mathcal K>1$,  first, we consider $\sigma\rightarrow 0^+$ and   $\lambda=-a/\sigma_1$, then
\[
\lim\limits_{\sigma\rightarrow 0^+}A\left(-a/\sigma_1\right)B\left(-a/\sigma_1\right)=\beta\left(1+\alpha-\exp\left(-\frac{a^2}{2\sigma_1}\right)\right)>g'(0)h'(0).
\]
Therefore, there is a positive constant $\sigma_0$ small enough such that $\mathrm{int} \Lambda(\sigma_0)\cap \mathbb R^-\neq \varnothing$.
Next, we consider $\sigma\rightarrow +\infty$, then $\lambda_B^+\rightarrow0^+$ and $\lambda_B^-\rightarrow0^-$ where
\[
\lambda_B^{\pm}=\pm\sqrt{\frac{2}{\sigma}\ln(1+\beta)}~~\text{and}~\Lambda^B=(\lambda_B^-,\lambda_B^+).
\]
It follows  that
\begin{equation}\label{06.2}
\lim\limits_{\sigma\rightarrow +\infty}A(\lambda)B(\lambda)\leqslant\alpha\beta<g'(0)h'(0)~~\text{for any}~\lambda\in \Lambda^A\cap\Lambda^B.
\end{equation}
Therefore, there is a positive constant $\sigma_{\infty}$ large enough such that $\Lambda(\sigma_\infty)\cap \mathbb R=\varnothing$.
Finally, by Theorem \ref{th2.2} and \eqref{06.1}, we finish the proof of (i)-(iii) in Corollary \ref{pro6.1}.

When $\mathcal K\leqslant1$, we have
\[
A(\lambda)B(\lambda)\leqslant\beta\left(1+\alpha-\exp\left(-\frac{a^2}{2\sigma_1}\right)\right)\leqslant g'(0)h'(0)~\text{for}~\lambda\in \Lambda^A\cap\Lambda^B.
\]
In the above inequality, the first equality  holds  only if $a=0$, which implies that the second equality does not hold. Then
\[
A(\lambda)B(\lambda)<g'(0)h'(0)~\text{for}~\lambda\in \Lambda^A\cap\Lambda^B,
\]
which means $\Lambda\neq\varnothing$. From  Theorem \ref{th2.2}, it follows that $c_l^*<0<c_r^*$.
\end{proof}

Now we give  more details of  the change  of   spatial propagation  caused by  the asymmetry of $k_1$. When $k_1$ is symmetric (namely $r=0$), it follows that $\mathcal K=\alpha\beta/(h'(0)g'(0))<1$ and  the propagation always happens along two  directions.
When the asymmetry  of $k_1$ becomes stronger (namely, $r$ becomes larger),    $\mathcal K$ becomes larger. If $\mathcal K>1$,  the asymmetry  of $k_1$ is strong enough to change the spreading dynamics of system \eqref{1.1}. It is possible that the epidemic spreads only along the flight route of  migratory birds and the spatial propagation against the flight route fails,
as long as the infectious humans are kept from moving frequently such that $\sigma<\sigma^*$.
Moreover, we point  out that if $(1+\alpha)\beta\leqslant g'(0)h'(0)$, then $\mathcal K<1$  always holds for any  $k_1$,
which means the reaction terms play a more important role and the asymmetry of  dispersal cannot change the  spreading  dynamics of system \eqref{1.1}.

Finally, the critical number $\sigma^*$ can be calculated by some numerical methods. For example, suppose that $\alpha=0.2$, $\beta=0.1$, $h'(0)g'(0)=0.22$, $a=0.5$ and $\sigma_1=1$;
then we have that $\mathcal K=1.4432$ and $\sigma^*=2.2098$.

\subsection{Uniform distribution}
\noindent

Suppose that  $k_1$ and $k_2$ are given by
\[
k_1(x)=\left\{
\begin{aligned}
&\frac{1}{a-b},&\text{for}~x\in[b,a],\\
&0,&\text{for}~x\notin[b,a],
\end{aligned}
\right.~~~\text{and}~~~~
k_2(x)=\left\{
\begin{aligned}
&\frac{1}{2\sigma},&\text{for}~x\in[-\sigma,\sigma],\\
&0,&\text{for}~x\notin[-\sigma,\sigma],
\end{aligned}
\right.
\]
where the constants  $a\in\mathbb R^+$ and $b\in\mathbb R^-$ stand for the farthest distances
of   movements of infectious agent    during a unit time period along and against the flight route, respectively. The  average moving speed is $\int k_1(x)xdx=(a+b)/2$.
The  constant $\sigma\in\mathbb R^+$ stands for the farthest distance of movements of  infectious human  during a unit time period.
Similarly to the normal distribution case, it holds that $\int k_2(x)xdx=0$.
Here the uniform distribution means that every distance in the  moving range  has the same probability to happen.

Similarly to the normal distribution case, we suppose  that the  parameters $a$ and $b$ have already been determined by  some experimental data and  $a+b\geqslant0$;
otherwise,  just  consider the new spatial variable $y=-x$.
Now we show how to choose the  parameter $\sigma$ such that $0<c_l^*<c_r^*$.

From \eqref{02.6}, some calculations show that
\[
A(\lambda)=
\left\{
\begin{aligned}
&\frac{e^{a\lambda}-e^{b\lambda}}{(a-b)\lambda}-1-\alpha,&&&\lambda\neq0,\\
&-\alpha,&&&\lambda=0,
\end{aligned}
\right.\\
\]
\[
B(\lambda)=
\left\{
\begin{aligned}
&\frac{e^{\sigma\lambda}-e^{-\sigma\lambda}}{2\sigma\lambda}-1-\beta,&&\lambda\neq0,\\
&-\beta,&&\lambda=0.
\end{aligned}
\right.
\]
When $a+b> 0$, denote
\[
r=-a/b\in(1,+\infty),
\]
which describes the asymmetry level of $k_1$. Indeed, we have that $M_1(k_1)/ M_2(k_1)=r^2$ and it is strictly increasing with respect to $r$, where  $M_1(k_1)$ and $ M_2(k_1)$ are defined in Remark \ref{re6.0}.

Before giving the result in this case, we need to prove the following lemma.
\begin{lemma}\label{lem6.3}
Let $\omega(z)=(z-1)e^z$ with $z\in\mathbb R$. Then for any $r\in(1,+\infty)$,
there is a unique constant $z_r\in\mathbb R$ such that $\omega(z_r)=\omega(-rz_r)$ and $z_r\neq0$.
Moreover, we have that $z_r\in(1-1/r,1)$.
In addition, when $r=1$,  $\omega(z)>\omega(-z)$ for $z\in\mathbb R^+$.
\end{lemma}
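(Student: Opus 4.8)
The plan is to reduce everything to the behaviour of the single function $\Phi(z)=\omega(z)-\omega(-rz)$. First I would record the elementary facts about $\omega$: since $\omega'(z)=ze^z$, the function $\omega$ is strictly decreasing on $(-\infty,0)$ and strictly increasing on $(0,+\infty)$, with $\omega(0)=-1$, $\omega(z)\to0^-$ as $z\to-\infty$, and $\omega(z)\to+\infty$ as $z\to+\infty$. In particular, because $r>0$, $\omega(-rz)\to0^-$ as $z\to+\infty$, so $\Phi(z)\to+\infty$ as $z\to+\infty$, while $\Phi(0)=0$ always.

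Next I would differentiate: $\Phi'(z)=\omega'(z)+r\,\omega'(-rz)=ze^z-r^2ze^{-rz}=z\bigl(e^z-r^2e^{-rz}\bigr)$. For $r>1$ the second factor vanishes exactly at $z_0=\frac{2\ln r}{1+r}>0$, is negative for $z<z_0$, and positive for $z>z_0$. Combining signs gives $\Phi'>0$ on $(-\infty,0)$, $\Phi'<0$ on $(0,z_0)$, and $\Phi'>0$ on $(z_0,+\infty)$. Hence $\Phi<0$ on $(-\infty,0)$ (it increases strictly up to $\Phi(0)=0$) and on $(0,z_0]$ (it decreases strictly from $0$), whereas on $(z_0,+\infty)$ it increases strictly from a negative value to $+\infty$; therefore $\Phi$ has exactly one zero $z_r$ in $(z_0,+\infty)$, it is positive, and $z_r$ is the unique nonzero zero of $\Phi$. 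Moreover $\Phi<0$ on $(0,z_r)$ and $\Phi>0$ on $(z_r,+\infty)$, so for any $z>0$ the sign of $\Phi(z)$ tells us whether $z<z_r$ or $z>z_r$.

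To locate $z_r$ it then suffices to evaluate $\Phi$ at the two endpoints, which for $r>1$ are both positive. At $z=1$, $\omega(1)=0$ and $\omega(-r)=-(r+1)e^{-r}$, so $\Phi(1)=(r+1)e^{-r}>0$, giving $z_r<1$. At $z=1-1/r\in(0,1)$, $\omega(1-1/r)=-\tfrac1r e^{1-1/r}$ and $\omega\bigl(-r(1-1/r)\bigr)=\omega(-(r-1))=-r e^{1-r}$, so $\Phi(1-1/r)=re^{1-r}-\tfrac1r e^{1-1/r}$, which is negative iff $r^2<e^{\,r-1/r}$, i.e. iff $2\ln r<r-\tfrac1r$. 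This last inequality follows by setting $\psi(r)=r-\tfrac1r-2\ln r$ and noting $\psi(1)=0$ together with $\psi'(r)=(1-\tfrac1r)^2>0$ for $r>1$. Thus $\Phi(1-1/r)<0$, hence $1-1/r<z_r$, and altogether $z_r\in(1-1/r,1)$.

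Finally, the case $r=1$ is the degenerate limit of the same computation: there $\Phi'(z)=z(e^z-e^{-z})>0$ for every $z\neq0$, so $\Phi$ is strictly increasing, and $\Phi(0)=0$ forces $\Phi(z)>0$ for $z>0$, which is exactly $\omega(z)>\omega(-z)$ on $\mathbb R^+$. The only steps that require a little care are the three-interval sign analysis of $\Phi'$ that yields uniqueness, and the auxiliary estimate $2\ln r<r-1/r$; I expect the latter to be the only genuinely nontrivial inequality, and it is handled by the convexity-type argument with $\psi$ above.
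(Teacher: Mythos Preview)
Your proof is correct and follows essentially the same route as the paper: you study the same auxiliary function $\Phi(z)=\omega(z)-\omega(-rz)$, obtain the identical derivative $\Phi'(z)=z(e^z-r^2e^{-rz})$ with critical points at $0$ and $z_0=\frac{2\ln r}{1+r}$, evaluate $\Phi$ at $1$ and $1-1/r$, and reduce the sign of $\Phi(1-1/r)$ to the inequality $2\ln r<r-1/r$. If anything, you are slightly more explicit than the paper in arguing global uniqueness (handling $(-\infty,0)$ and the sign pattern on $(0,\infty)$) and in justifying the key inequality via $\psi'(r)=(1-1/r)^2>0$, which the paper simply asserts.
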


\begin{proof}
For $r\in(1,+\infty)$, define a function
\[
\bar \omega(z)=\omega(z)-\omega(-rz)=(z-1)e^z+(rz+1)e^{-rz}~~\text{for}~z\in\mathbb R.
\]
It follows that  $\bar \omega'(z)=ze^z-r^2ze^{-rz}$ for $z\in\mathbb R$.
Denote  $z_1=0$ and $z_2=2(1+r)^{-1}\ln r\in(0,1)$. Then  some calculations imply  that $\bar \omega'(z_1)=\bar \omega'(z_2)=0$ and
\[
\bar \omega'(z)<0,~z\in(z_1,z_2)~\text{and}~\bar \omega'(z)>0,~z\in\mathbb R \backslash[z_1,z_2].
\]
It is easy to check that
\[
\bar \omega(1)=(r+1)e^{-r}>0
\]
and it follows from $r-1/r>2\ln r$ for $r>1$ that
\[
\bar \omega\left(1-1/r\right)=\frac{e^{1-r}}{r}\left(r^2-e^{r-1/r}\right)<0~\text{for}~r>1.
\]
Then we can find a unique constant $z_r\in(1-1/r,1)$ such that $\bar \omega(z_r)=0$; namely $\omega(z_r)=\omega(-rz_r)$.
Moreover, when $r=1$, we have that $z_1=z_2$ and $\bar \omega$ is strictly increasing in $\mathbb R$. Then $\omega(z)>\omega(-z)$ for $z\in\mathbb R^+$ by $\bar \omega(0)=0$.
\end{proof}

Now define  $\omega(z)=(z-1)e^z$ with $z\in\mathbb R$. From Lemma \ref{lem6.3},  let $z_r$ denote the constant satisfying $\omega(z_r)=\omega(-rz_r)$.
In view of $A'(\lambda)=\frac{1}{(a-b)\lambda^2}(\omega(a\lambda)-\omega(b\lambda))$,  from $\omega(z_r)=\omega(-rz_r)$,
it follows that $A'(z_r/b)=0$ and
\[
A\left(z_r/b\right)=\min\{A(z);z\in\mathbb R\}=\frac{e^{z_r}}{1+rz_r}-1-\alpha\leqslant A(0)<0.
\]
Now we can define the constant $\mathcal K$ which describes  the change  of  the spatial propagation of system \eqref{1.1} caused by the asymmetry of $k_1$, as follows
\[
\mathcal K\triangleq\frac{-\beta \min\{A(z);z\in\mathbb R\}}{g'(0)h'(0)}=\frac{-\beta A\left(z_r/b\right)}{g'(0)h'(0)}>0.
\]
When $a+b=0$, by $\min\{A(z);z\in\mathbb R\}=-\alpha$, we can simply denote $\mathcal K=\alpha\beta/(g'(0)h'(0))$.
From  the following result, we see that   $\mathcal K$ is  strictly increasing with respect to $r$.

\begin{proposition}\label{pro6.4}
\rm
$\frac{\partial}{\partial r}\mathcal K>0$ for $r>1$.
\end{proposition}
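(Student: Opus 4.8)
The plan is to show that the whole proposition collapses onto the inclusion $z_r\in(1-1/r,1)$ already furnished by Lemma \ref{lem6.3}. By the displayed formula for $A(z_r/b)$ and the definition of $\mathcal K$ one has
\[
\mathcal K=\frac{\beta}{g'(0)h'(0)}\Big(1+\alpha-\phi(r)\Big),\qquad \phi(r)\triangleq\frac{e^{z_r}}{1+rz_r},
\]
so, since $\alpha,\beta,g'(0),h'(0)$ are fixed positive constants, it suffices to prove $\phi'(r)<0$ for $r>1$. As a preliminary I would record that $z_r$ depends smoothly on $r$: in the notation of Lemma \ref{lem6.3}, $\bar\omega(z;r)=\omega(z)-\omega(-rz)$ has $\partial_z\bar\omega=z(e^{z}-r^{2}e^{-rz})$, and the proof of that lemma shows $z_r>z_2:=2(1+r)^{-1}\ln r$, the point where $\bar\omega'$ changes sign, so $\partial_z\bar\omega(z_r;r)=z_r(e^{z_r}-r^{2}e^{-rz_r})>0$; hence the implicit function theorem applies and, in particular, $e^{z_r}-r^{2}e^{-rz_r}>0$.

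Next I would differentiate the defining identity $\omega(z_r)=\omega(-rz_r)$ in $r$. Using $\omega'(z)=ze^{z}$ and cancelling the common nonzero factor $z_r$, this gives
\[
z_r'=\frac{rz_re^{-rz_r}}{e^{z_r}-r^{2}e^{-rz_r}}>0 .
\]
A direct computation of $\phi'$ then yields
\[
\phi'(r)=\frac{e^{z_r}}{(1+rz_r)^{2}}\Big[z_r'\big(1-r(1-z_r)\big)-z_r\Big],
\]
where the coefficient $1-r(1-z_r)$ is positive because $z_r>1-1/r$. Substituting the formula for $z_r'$, dividing by $z_r>0$, and clearing the positive denominator $e^{z_r}-r^{2}e^{-rz_r}$, the inequality $\phi'(r)<0$ simplifies — after collecting the $e^{-rz_r}$ terms — to
\[
r\,(1+rz_r)<e^{(1+r)z_r}.
\]
Finally, rewriting the defining identity $(z_r-1)e^{z_r}=(-rz_r-1)e^{-rz_r}$ as $e^{(1+r)z_r}=\frac{1+rz_r}{1-z_r}$ (legitimate since $z_r<1$) and cancelling the positive factor $1+rz_r$, the last display is seen to be equivalent to $r(1-z_r)<1$, i.e. to $z_r>1-1/r$. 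This is precisely the conclusion of Lemma \ref{lem6.3}, so $\phi'(r)<0$ and the proposition follows.

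The step I expect to require the most care is not any single estimate but the bookkeeping: one must carry out the algebraic simplifications linking $\phi'(r)<0$, the explicit expression for $z_r'$, and the chain of equivalences above without sign errors, and — the genuinely useful observation — notice that the defining relation for $z_r$ is most conveniently recorded as $e^{(1+r)z_r}=(1+rz_r)/(1-z_r)$, which is the identity that turns every inequality encountered along the way (positivity of $e^{z_r}-r^{2}e^{-rz_r}$, positivity of $1-r(1-z_r)$, and the final inequality) into the single membership $z_r\in(1-1/r,1)$ guaranteed by Lemma \ref{lem6.3}.
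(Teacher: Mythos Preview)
Your proof is correct and follows essentially the same route as the paper: both reduce to showing $\frac{\partial}{\partial r}\big(e^{z_r}/(1+rz_r)\big)<0$, compute $z_r'$ by implicit differentiation of $\omega(z_r)=\omega(-rz_r)$, and then use the identity $e^{(1+r)z_r}=(1+rz_r)/(1-z_r)$ together with $z_r>z_2$ to reduce the sign condition to the inclusion $z_r>1-1/r$ from Lemma~\ref{lem6.3}. The only differences are cosmetic (you keep the factor $e^{-rz_r}$ explicit where the paper clears it at the outset), and your added remark on the implicit function theorem justifying smooth dependence of $z_r$ on $r$ is a welcome detail the paper leaves implicit.
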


\begin{proof}
It suffices to prove that $\frac{\partial}{\partial r}A\left(z_r/b\right)<0$ for $r>1$.
Differentiating the equation $\omega(z_r)=\omega(-rz_r)$  with respect to $r$, we have that
\[
\frac{d z_r}{dr}=\frac{rz_r}{e^{(1+r)z_r}-r^2}.
\]
Then
\[
\begin{aligned}
\frac{\partial}{\partial r}\left(\frac{e^{z_r}}{1+rz_r}\right)&=\frac{e^{z_r}(1-r+rz_r)}{(1+rz_r)^2}\cdot\frac{d z_r}{dr}-\frac{e^{z_r}z_r}{(1+rz_r)^2}\\
&=\frac{e^{z_r}z_r}{(1+rz_r)^2(e^{(1+r)z_r}-r^2)}\left(r+r^2z_r-e^{(1+r)z_r}\right)
\end{aligned}
\]
Also from $\omega(z_r)=\omega(-rz_r)$, it holds that $e^{(1+r)z_r}=(1+rz_r)/(1-z_r)$.
Then by $z_r\in(1-1/r,1)$, we have
\[
r+r^2z_r-e^{(1+r)z_r}=\frac{1+rz_r}{1-z_r}(r-rz_r-1)<0.
\]
From the proof of Lemma \ref{lem6.3}, it holds that $z_r>z_2=2(1+r)^{-1}\ln r$; namely $e^{(1+r)z_r}-r^2>0$.
Therefore, $\frac{\partial}{\partial r}A\left(z_r/b\right)<0$, which completes the proof.
\end{proof}

Now we give the result on the change of spatial propagation caused by the asymmetry of $k_1$.

\begin{corollary}\label{pro6.2}
All the results in Corollary \ref{pro6.1} hold for the uniform distribution case.
\end{corollary}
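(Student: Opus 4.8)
The plan is to run the proof of Corollary \ref{pro6.1} almost verbatim, since that argument used only four structural facts about the kernels, each of which I would re-verify for the uniform distributions. First, because $\int_{\mathbb R}k_i(x)e^{\lambda x}\,dx$ is always strictly convex, we still have $A''(\lambda)>0$ and $B''(\lambda)>0$ for all $\lambda\in\mathbb R$; hence, exactly as in the proof of Theorem \ref{th2.2}, $\Lambda^A$ and $\Lambda^B$ are open intervals, $\Lambda\subseteq\Lambda^A\cap\Lambda^B$ is a closed interval, and $0\notin\Lambda$ since $A(0)B(0)=\alpha\beta<g'(0)h'(0)$. Second, differentiating the explicit formulas for $A,B$ gives $A'(0)=(a+b)/2$ and $B'(0)=0$ (the latter because $k_2$ is even), so $\big(A(\lambda)B(\lambda)\big)'\big|_{\lambda=0}=-\beta(a+b)/2\leqslant0$. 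When $a+b>0$ this forces $\Lambda\subseteq\mathbb R^-$ by the same convexity argument as in Corollary \ref{pro6.1}, and when $a+b=0$ we have $\mathcal K=\alpha\beta/(g'(0)h'(0))<1$, so the assertion falls under the case $\mathcal K\leqslant1$ treated last.

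Next I would establish that $\Lambda(\sigma)$ is strictly nested and continuous in $\sigma$. Writing $B(\lambda)=\sinh(\sigma\lambda)/(\sigma\lambda)-1-\beta$ and $\varphi(u)=u\cosh u-\sinh u$, a direct computation gives $\partial_\sigma B(\lambda)=\lambda\,\varphi(\sigma\lambda)/(\sigma^2\lambda^2)$; since $\varphi(0)=0$ and $\varphi'(u)=u\sinh u\geqslant0$, the function $\varphi$ has the sign of $u$, so $\partial_\sigma B(\lambda)>0$ for every $\lambda\neq0$. Consequently $\partial_\sigma\big(A(\lambda)B(\lambda)\big)=A(\lambda)\,\partial_\sigma B(\lambda)<0$ on $\Lambda^A\cap\Lambda^B$, which yields the strict nesting $\Lambda(\sigma')\subsetneq\Lambda(\sigma)$ for $\sigma'>\sigma$ whenever $\Lambda(\sigma)\neq\varnothing$ (the analogue of \eqref{06.1}), and continuity of $B$ in $\sigma$ makes $\Lambda(\cdot)$ continuous. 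For the two limiting regimes: when $\mathcal K>1$ I take $\lambda=z_r/b<0$, the minimiser of $A$ furnished by Lemma \ref{lem6.3}; since $\sinh(\sigma\lambda)/(\sigma\lambda)\to1$ as $\sigma\to0^+$, we get $B(z_r/b)\to-\beta$, hence $A(z_r/b)B(z_r/b)\to-\beta\min_{z}A(z)=\mathcal K\,g'(0)h'(0)>g'(0)h'(0)$, so $z_r/b\in\mathrm{int}\,\Lambda(\sigma)\cap\mathbb R^-$ for all small $\sigma>0$. As $\sigma\to+\infty$, solving $\sinh s=(1+\beta)s$ for the unique $s_\beta>0$ shows $\Lambda^B=(-s_\beta/\sigma,\,s_\beta/\sigma)$, which shrinks to $\{0\}$; hence on $\Lambda^A\cap\Lambda^B$ both $A(\lambda)\to A(0)=-\alpha$ and $B(\lambda)\to B(0)=-\beta$ uniformly, so $\limsup_{\sigma\to+\infty}\sup_{\lambda\in\Lambda^A\cap\Lambda^B}A(\lambda)B(\lambda)\leqslant\alpha\beta<g'(0)h'(0)$ and $\Lambda(\sigma)=\varnothing$ for $\sigma$ large (the analogue of \eqref{06.2}). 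With these four facts in hand, the monotonicity-and-continuity argument of Corollary \ref{pro6.1}, combined with Theorem \ref{th2.2}(i), (iii) and (v), produces the critical value $\sigma^*$ and the trichotomy (i)--(iii).

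It remains to treat $\mathcal K\leqslant1$. Here $-\beta\min_{z}A(z)\leqslant g'(0)h'(0)$, and since $\sinh u\geqslant u$ for all $u$ with equality only at $u=0$ we have $B(\lambda)>-\beta$, that is $|B(\lambda)|<\beta$, for every $\lambda\neq0$. Hence for $\lambda\in\Lambda^A\cap\Lambda^B$ with $\lambda\neq0$, $A(\lambda)B(\lambda)=|A(\lambda)|\,|B(\lambda)|<\beta\,|\min_{z}A(z)|=-\beta\min_{z}A(z)\leqslant g'(0)h'(0)$, while for $\lambda=0$, $A(0)B(0)=\alpha\beta<g'(0)h'(0)$; so $\Lambda=\varnothing$ and Theorem \ref{th2.2}(i) gives $c_l^*<0<c_r^*$ for every $\sigma>0$. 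The only genuinely new input relative to Corollary \ref{pro6.1} is that the elementary estimates on $e^{\sigma\lambda^2/2}$ are replaced by the corresponding ones for $\sinh(\sigma\lambda)/(\sigma\lambda)$ --- namely $\sinh u\geqslant u$, $\sinh(u)/u\to1$ as $u\to0$, and that $u\cosh u-\sinh u$ has the sign of $u$ --- so the only mild obstacle I anticipate is making the convergences $A\to-\alpha$ and $B\to-\beta$ uniform in $\lambda$ over the shrinking interval $\Lambda^B$ as $\sigma\to+\infty$, which is handled by continuity of $A$ and $B$ at $\lambda=0$.
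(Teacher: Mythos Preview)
Your proof is correct and follows essentially the same route as the paper's: both reduce to re-checking the four structural inputs (convexity of $A,B$; the sign of $(AB)'(0)$; strict monotonicity $\partial_\sigma B>0$; and the two limiting regimes $\sigma\to0^+$, $\sigma\to+\infty$) and then invoking Theorem~\ref{th2.2} exactly as in Corollary~\ref{pro6.1}. The only cosmetic difference is that the paper writes $\partial_\sigma B=\big(\omega(\lambda\sigma)-\omega(-\lambda\sigma)\big)/(2\lambda\sigma^2)$ and appeals to the last line of Lemma~\ref{lem6.3}, whereas you use the equivalent hyperbolic form $\varphi(u)=u\cosh u-\sinh u$; one small slip to fix is that ``$\sinh u\geqslant u$ for all $u$'' is false for $u<0$---what you actually need (and what holds) is $\sinh u/u>1$ for $u\neq0$, which still gives $B(\lambda)>-\beta$.
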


\begin{proof}
Although this proof is  similar to the proof of  Corollary \ref{pro6.1}, we need to check some details.
Let the sets $\Lambda$, $\Lambda_A$ and $\Lambda_B$ and the function $\Lambda(\cdot): \sigma\mapsto\Lambda$  be the same notations in the proof of  Corollary \ref{pro6.1}.
By some calculations and Lemma \ref{lem6.3}, we have
\[
\frac{\partial B}{\partial \sigma}=\frac{\omega(\lambda \sigma)-\omega(-\lambda \sigma)}{2\lambda \sigma^2}>0~\text{for}~\lambda\in\mathbb R.
\]
Then it follows that \eqref{06.1} holds and this inclusion is strict when $\Lambda(\sigma)\neq\varnothing$.

When $\mathcal K>1$, consider $\sigma\rightarrow 0^+$ and  $\lambda=z_r/b$, then
\[
\lim\limits_{\sigma\rightarrow 0^+}A\left(z_r/b\right)B\left(z_r/b\right)=-\beta A\left(z_r/b\right)>g'(0)h'(0).
\]
Considering $\sigma\rightarrow +\infty$,  we have that  $B(\lambda)\rightarrow +\infty$ for any $\lambda\in\mathbb R^+\cup\mathbb R^-$.
Then  $\lambda_B^+\rightarrow0^+$ and $\lambda_B^-\rightarrow0^-$
where $\Lambda^B=(\lambda_B^-,\lambda_B^+)$,
which means  \eqref{06.2} holds. The rest  of this proof can be obtained similarly.
\end{proof}

From Corollary \ref{pro6.2}, we have some similar discussions to those   from  Corollary \ref{pro6.1} in the normal distribution case. In addition,  here the critical number  $\sigma^*$ can also be  calculated by a numerical method.
For example, when $\alpha=\beta=0.2$, $g'(0)h'(0)=0.06$, $a=2$ and $b=-1$, we have $\mathcal K=1.1952$ and $\sigma^*=0.8423$.

\begin{remark}
\rm
For the more general form of $k_1$, when $k_2$ is symmetric,
we think that Corollary \ref{pro6.1} remains true,
as long as we define $\mathcal K\triangleq\beta(\alpha+1-E(k_1))/(h'(0)g'(0))$ and $\sigma\triangleq \text{Var}(k_2)$,
where  $E(k_1)=\inf\{\int_{\mathbb R}k_1(x)e^{\lambda x}dx;~\lambda\in\mathbb R\}$
and $ \text{Var}(k_2)$ is  the variance  of $k_2$.
\end{remark}

We have presented some applications of the theoretical results to the control of epidemics whose infectious agents (bacteria or viruses) are carried by migratory birds. These applications demonstrate  that the frequent movements of  the infectious humans accelerate  the spreading of the epidemics. Moreover,  it is possible that the epidemic spreads only along the flight route of  migratory birds and the spatial propagation against the flight route fails as long as the infectious humans are kept from moving frequently.

\section*{Acknowledgments}
\noindent

We thank the reviewers for their  helpful comments and Dr. Ru Hou (Peking University) for her helpful discussion. Research of W.-B. Xu was partially supported by China Postdoctoral Science Foundation (2019M660047). Research of W.-T. Li was partially supported by NSF of China (11731005, 11671180). Research of S. Ruan was partially supported by National Science Foundation (DMS-1853622).

\end{document}